\newtheorem{theorem}{Theorem}[section]
\newtheorem{prop}[theorem]{Proposition}
\newtheorem{cor}[theorem]{Corollary}
\newtheorem{thm}[theorem]{Theorem}
\newtheorem{lem}[theorem]{Lemma}
\newtheorem*{cor*}{Corollary}
\newtheorem*{conjecture*}{Conjecture}
\newtheorem*{thm*}{Theorem}
\newtheorem*{lem*}{Lemma}
\newtheorem*{prop*}{Proposition}
\theoremstyle{definition}
\newtheorem{observation}[theorem]{Observation}
\newtheorem*{defn*}{Definition}
\theoremstyle{remark}
\newtheorem{remark}[theorem]{Remark}
\title[]{Classification of Invariant Subalgebras in a class of factors with property (T)}
\author{Yongle Jiang*}
\address{Yongle Jiang, School of Mathematical Sciences, Dalian University of Technology,
Dalian, 116024, China}
\email{yonglejiang@dlut.edu.cn}
\author{Hongyi Li}
\address{Hongyi Li, School of Mathematical Sciences, Dalian University of Technology,
Dalian, 116024, China}
\email{hyli.math@gmail.com}
\thanks{*-Corresponding author}
\date{\today}
\begin{document}

\begin{abstract}
Let $n\geq 2$ and $G_n=\mathbb{Z}^n\rtimes SL_n(\mathbb{Z})$. We classify all $G_n$-invariant von Neumann subalgebras in $L(G_n)$. For $n=2$, this gives an alternative proof of the previous result of Jiang-Liu. For $n\geq 3$, this gives the first class of property (T) groups without the invariant subalgebras rigidity property but invariant subalgebras in the corresponding group factors can still be classified. As a corollary, $L(G_n)$ admits a unique maximal Haagerup $G_n$-invariant von Neumann subalgebra.
\end{abstract}

\subjclass[2020]{Primary 46L10; Secondary 22D55, 47C15}

\keywords{property (T), invariant von Neumann subalgebras, Haagerup radical, characters}

\maketitle

%\tableofcontents

\section{introduction}

Property (T), introduced in a  paper by Kazhdan \cite{kazhdan_T}, is a fundamental rigidity property for groups with far-reaching applications across several areas of mathematics, including group theory, dynamical systems, and operator algebras (see \cite{bdv} for an overview).
Its influence is especially evident in the proof of numerous rigidity theorems. For instance, in group theory, property (T) plays a key role in Margulis’s normal subgroup theorem for higher-rank lattices \cite{Margulis_book}. In operator algebras, it underpirds both Connes's pioneering work on factors with countable symmetry groups \cite{connes_T} and  Popa's striking works on cocycle and orbit equivalence superrigidity \cites{popa_T1, popa_T2, popa_Tc}. Moreover, it is central to many rigidity phenomena in the theory of von Neumann algebras, see \cites{popa_icm, vaes_icm, ioana_icm, houdayer_icm} for an overview.

Let $\Gamma$ be a countable discrete group—in particular, a higher-rank lattice subgroup in a semisimple Lie group $\mathbb{G}$ with Kazhdan’s property (T). In this paper, we study the classification of von Neumann subalgebras of the group von Neumann algebra $L(\Gamma)$ that are invariant under the conjugation action of $\Gamma$ following \cites{ab,kp,cd}.

One of the primary motivations of  \cites{ab,kp} was to seek a non-commutative generalization of Margulis's normal subgroup theorem. Let us now consider a different perspective for this line of research. We first note that recent years have witnessed increasing attention on the structure of property (T) II$_1$ factors such as $L(\Gamma)$. Currently, two lines of questions have been particularly prominent.

The first originates from Connes’ suggestion that there should be a rich analogy between the embedding $\Gamma<\mathcal{U}(L(\Gamma))$ into the unitary group of $L(\Gamma)$ and the embedding of a lattice $\Gamma<\mathbb{G}$  in the corresponding Lie group $\mathbb{G}$. In particular, it was expected that certain superrigidity phenomena in the operator algebra setting should hold (see \cites{jones_10,cp,peterson_online} for discussion on this). For advances in this direction, see e.g. \cites{bekka_invent, pt,cp,bh,bbhp,dp, dgghl} and reference therein. This line of inquiry essentially studies the position of $\Gamma$ inside the unitary group $\mathcal{U}(L(\Gamma))$.

The second major problem is Connes’ rigidity conjecture concerning isomorphisms of group von Neumann algebras. He conjectured that if $\Gamma$ is an infinite icc  (infinite conjugacy class) group with property (T), then any isomorphism $L(\Gamma)\cong L(\Lambda)$ for an arbitrary group $\Lambda$ forces $\Gamma\cong \Lambda$. A major result on this conjecture has been obtained recently in \cites{ cios}, where a property (T) group satisfying this conjectured property was constructed. Note that $L(\Gamma)\cong L(\Lambda)$ implies an isomorphism between their unitary groups. Moreover, when $\Gamma$ is icc, any such an isomorphism preserves $\mathbb{T}$ (viewed as constant unitaries) globally (since $\mathbb{T}=\mathcal{Z}(L(\Gamma))\cap \mathcal{U}(L(\Gamma))$), so it induces an isomorphism $\mathcal{U}(L(\Gamma))/\mathbb{T}\cong \mathcal{U}(L(\Lambda))/\mathbb{T}$. It is also clear that $\Gamma$ embeds into $\mathcal{U}(L(\Gamma))/\mathbb{T}\cong \text{Inn}(L(\Gamma))$, the group of inner automorphisms of $L(\Gamma)$.

Thus, both problems are related to studying the embedding
\begin{align*}
    \Gamma<\mathcal{U}(L(\Gamma))/\mathbb{T}\cong \text{Inn}(L(\Gamma)),
\end{align*}
i.e. the conjugation action of $\Gamma$ on $L(\Gamma)$. Consequently, investigating this conjugation action might  potentially help understanding these two celebrated questions.

In this context, inspired by \cite{kp}, a new specific form of rigidity—called the invariant subalgebra rigidity (ISR) property—was recently introduced and studied by Amrutam and the first named author in \cite{aj}. A countable discrete group $G$
is said to have the ISR property if every $G$-invariant von Neumann subalgebra in $L(G)$ is of the form $L(N)$ for some normal subgroup $N\lhd G$. In other words, the lattice of $G$-invariant subalgebras $L(G)$ becomes highly constrained and classifiable for $G$ with the ISR property.

While many groups, including certain higher-rank lattices \cite{kp}, acylindrically hyperbolic groups with trivial amenable radical \cite{cds}, finite direct sum of non-abelian free groups \cite{aj} and even broad classes of amenable groups \cites{jz, dj, adjs}, have been shown to satisfy ISR, much less is known about the structure of 
$G$-invariant subalgebras when 
$G$ does not have the ISR property. In fact, there are only two papers along this direction till now. In \cite{jiangliu}, together with Liu, the first named author classified all invariant von Neumann subalgebras in $L(\mathbb{Z}^2\rtimes SL_2(\mathbb{Z}))$. In the joint work with Amrutam-Dudko-Skalski \cite{adjs}, we construct an amenable groups $G$ without this ISR property but invariant von Neumann subalgebras in $L(G)$ are still classifiable.
But what happens for groups with property (T)? Can one classify all invariant von Neumann subalgebras in $L(G)$ for a group $G$ with property (T) but without the ISR property? Since it is believed that all non-amenable groups with trivial amenable radical may have the ISR property \cites{cds,dj}, it is nature to consider property (T) groups with nontrivial amenable radical in order to answer these questions. 

The purpose of this work is to  exhibit a natural class of property (T) groups that do not have the ISR property, yet for which a complete classification of 
$G$-invariant von Neumann subalgebras of 
$L(G)$ is still possible. The following is our main result, which answers \cite[Question 4.1]{jiangliu} completely.

\begin{thm}\label{thm}
Let $G_{n}=\mathbb{Z}^n\rtimes SL_n(\mathbb{Z}),\ n\ge 2$. Then a von Neumann subalgebra $P\subseteq L(G_{n})$ is $G_n$-invariant if and only if either $P=L(H)$ for some normal subgroup $H\subseteq G_n$ or $P=A_d$ for some $d \ge 1$, where $A_d:=\{x\in L(d\mathbb{Z}^n):~\tau(xu_g)=\tau(xu_{g^{-1}}), ~\forall~g\in d\mathbb{Z}^n\}\subsetneq L(d\mathbb{Z}^n)$, where $\tau$ denotes the canonical trace on $L(G_n)$ defined by $\tau(x)=\langle x\delta_e, \delta_e\rangle$ for any $x\in L(G_n)\subseteq B(\ell^2(G_n))$ and $d\mathbb{Z}^n=(d\mathbb{Z})^n$ is the subgroup of $\mathbb{Z}^n$. 
\end{thm}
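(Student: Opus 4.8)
The plan is to first dispose of the easy direction—checking that each $L(H)$ with $H\lhd G_n$ and each $A_d$ is indeed $G_n$-invariant—and then to prove the substantive converse. For the setup I would identify $L(G_n)=L^\infty(\mathbb{T}^n)\rtimes SL_n(\mathbb{Z})$, where $\mathbb{T}^n=\widehat{\mathbb{Z}^n}$ carries the dual $SL_n(\mathbb{Z})$-action, and record two structural facts: the action $SL_n(\mathbb{Z})\curvearrowright\mathbb{T}^n$ is essentially free and mixing for $n\ge 2$, so that $L(\mathbb{Z}^n)$ is a Cartan subalgebra of $L(G_n)$; and the pair $(G_n,\mathbb{Z}^n)$ has relative property (T) for every $n\ge 2$, which is the uniform rigidity input (this is what lets $n=2$ be treated alongside the property (T) cases $n\ge 3$). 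Given a $G_n$-invariant $P$, the trace-preserving conditional expectation $E_P\colon L(G_n)\to P$ is canonical, hence $G_n$-equivariant for the conjugation action; the function $\varphi_P(g)=\|E_P(u_g)\|_2^2$ is then a conjugation-invariant positive-definite function on $G_n$, and this is the object I would track throughout.

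The argument then splits along a dichotomy: either $P\subseteq L(\mathbb{Z}^n)$, or not. To establish it I would analyze the crossed-product components $x_A:=E_{L(\mathbb{Z}^n)}(x u_{(0,A)}^{-1})\in L(\mathbb{Z}^n)$ for $x\in P$ and $A\in SL_n(\mathbb{Z})$. Conjugating $x$ by a translation $u_{(w,I)}$ multiplies $x_A$ by the character $z\mapsto z^{(I-A)w}$, so the space of $A$-components of $P$ is invariant under multiplication by all characters in $(I-A)\mathbb{Z}^n$. When $1$ is not an eigenvalue of $A$ this subgroup has finite index, and mixing of the action together with the von Neumann algebra structure (in particular, being a module over $P\cap L(\mathbb{Z}^n)$) should force any nonzero $A$-component to contain an honest character $a_v$, i.e. to produce a group element $u_{(v,A)}\in P$; the residual cases ($1$ an eigenvalue of $A$) and the assembly of these group elements into a single normal subgroup $H=\{g:u_g\in P\}$ with $P=L(H)$ are where relative property (T) and the character rigidity of $SL_n(\mathbb{Z})$ enter. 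The upshot I am aiming for is: if $P\not\subseteq L(\mathbb{Z}^n)$ then $P=L(H)$ for a normal $H\lhd G_n$.

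It remains to classify the $SL_n(\mathbb{Z})$-invariant von Neumann subalgebras of $L(\mathbb{Z}^n)$, and this I expect to be the main obstacle—it is also where the exceptional family $A_d$ is born. Working in Fourier coordinates, $L^2(L(\mathbb{Z}^n))=\bigoplus_{c\ge 0}V_c$, where $V_c=\overline{\operatorname{span}}\{a_v:\gcd(v)=c\}$ is the span of the content-$c$ orbit, an $SL_n(\mathbb{Z})$-invariant subspace. The flip $v\mapsto -v$ commutes with the whole action and splits each $V_c=V_c^+\oplus V_c^-$ into symmetric and antisymmetric parts; $V_c^+$ is exactly the Fourier picture of $A_d$ at level $c$. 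The heart of the matter is to show that, for a von Neumann subalgebra $Q\subseteq L(\mathbb{Z}^n)$ (self-adjoint, convolution-closed, containing the constants), the only invariant subspaces of $V_c$ that can occur are $0$, $V_c^+$ and $V_c$, and that the set of contents on which $Q$ is supported is exactly $\{c:d\mid c\}$ for some $d$. Ruling out every other invariant subspace—which amounts to an irreducibility/primeness statement for the permutation representations of $SL_n(\mathbb{Z})$ on primitive-vector orbits, exploiting transitivity on primitive vectors and the algebra structure—is the delicate step; granting it, $Q$ is forced to be either $L(d\mathbb{Z}^n)$ (all of the allowed $V_c$) or $A_d$ (the symmetric parts only).

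Finally I would assemble the pieces: in the abelian case $Q=L(d\mathbb{Z}^n)$ is $L(H)$ for the normal subgroup $H=d\mathbb{Z}^n\lhd G_n$ and $Q=A_d$ is the exceptional family (with the trivial subalgebra $\mathbb{C}=L(\{e\})$ arising as the degenerate endpoint of the $L(H)$ family), while the non-abelian case already yields $P=L(H)$; a short verification that the $SL_n(\mathbb{Z})$-invariant subgroups of $\mathbb{Z}^n$ are precisely the $d\mathbb{Z}^n$, together with a description of the normal subgroups of $G_n$, closes the classification. The main obstacle, to reiterate, is the representation-theoretic rigidity needed to exclude exotic invariant subalgebras—both the exotic abelian ones beyond $L(d\mathbb{Z}^n)$ and $A_d$, and the exotic ones straddling the $SL_n(\mathbb{Z})$-direction—for which relative property (T) of $(G_n,\mathbb{Z}^n)$, mixing of the toral action, and character rigidity of $SL_n(\mathbb{Z})$ are the essential inputs.
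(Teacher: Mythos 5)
Your overall architecture (easy direction; a dichotomy ``$P\subseteq L(\mathbb{Z}^n)$ or $P=L(H)$''; classification of invariant subalgebras of $L(\mathbb{Z}^n)$) is plausible, and your opening computation is correct: conjugating by $u_w$ multiplies the $A$-component of $x$ by the character of $(I-A)w$. But the two load-bearing steps are left as hopes (``should force'', ``granting it''), and the tools you nominate for them are partly unavailable. First, the action $SL_n(\mathbb{Z})\curvearrowright\mathbb{T}^n$ is \emph{not} mixing: the stabilizer of any nonzero $v\in\mathbb{Z}^n$ is an infinite subgroup of $SL_n(\mathbb{Z})$, so the Koopman coefficient $\langle\sigma_g a_v,a_v\rangle$ equals $1$ along an infinite set of group elements; the action is only ergodic (indeed weakly mixing), so any argument leaning on mixing to force a character into a nonzero $A$-component must be replaced. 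Second, character rigidity of $SL_n(\mathbb{Z})$ fails for $n=2$ ($SL_2(\mathbb{Z})$ is virtually free and has an abundance of characters), so it cannot serve as the uniform input that lets $n=2$ be treated alongside $n\ge 3$. What substitutes for both in the paper is rigidity of the \emph{pair}: Burger's classification of $SL_n(\mathbb{Z})$-invariant probability measures on $\mathbb{T}^n$ (Haar or atomic on finite orbits, \cite{burger}), converted into a non-factorization statement for invariant characters on $\mathbb{Z}^n$ (Lemma~\ref{3.2}), and Witte's classification of measurable quotients of $SL_n(\mathbb{Z})\curvearrowright\mathbb{T}^n$ \cite{wit}. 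The latter is precisely the statement you defer as ``the delicate step'' (only $0$, $V_c^+$, $V_c$ occur); without proving it or citing it, your abelian case has no proof, since that irreducibility claim \emph{is} the theorem in that case.

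The dichotomy step has a second, independent gap: you never address the central element $s=-I_n$ of $SL_n(\mathbb{Z})$ for even $n$, which is where most of the paper's work lies. Your ``$1$ is not an eigenvalue'' case includes $A=s$ (since $(I-s)\mathbb{Z}^n=2\mathbb{Z}^n$), but producing an honest group element $u_{(v,s)}\in P$ from a nonzero $s$-component, and excluding invariant subalgebras straddling $L(\mathbb{Z}^n)$ and the $s$-fiber (e.g.\ something like ``$A_d$ plus an $s$-component''), requires the explicit conditional-expectation analysis of Lemma~\ref{3.5} and Case 3 of Proposition~\ref{prop: version of prop 3.2}: one first shows $E_P(u_s)\in L(\{\pm I_n\})$ by $SL_n(\mathbb{Z})$-invariance, hence $E_P(u_s)\in\{0,u_s\}$, and each alternative then demands a long Fourier-coefficient argument, producing along the way the extra normal subgroups $\mathbb{Z}^n\rtimes\{\pm I_n\}$ and $2\mathbb{Z}^n\rtimes\{\pm I_n\}$. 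Note also that the paper's non-abelian case does not go through ``find group elements and assemble'' at all: for a subfactor it invokes the structure theorem of \cite{cds} to obtain a normal subgroup $N$ with $L(N)=P\,\bar{\otimes}\,(P'\cap L(N))$ and then kills one tensor factor via Lemma~\ref{3.2} (Proposition~\ref{prop: classify_invariant_subfactors}); for general $P$ it first locates $\mathcal{Z}(P)$ inside $L(\mathrm{Rad}(G_n))$ using \cite{aho} and then computes commutants. So your proposal should be read as an outline whose two crucial steps --- the non-abelian dichotomy (especially at $s=-I_n$) and the abelian classification --- are exactly the missing parts, and whose named inputs (mixing, character rigidity of $SL_n(\mathbb{Z})$) are respectively false and unavailable at $n=2$.
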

Note that $G_n$ are higher rank lattice groups with property (T) for $n>2$ \cite[Example 1.7.4(i)]{bdv}.
It shows that inside $L(G_n)$, invariant von Neumann subalgebras only  arise from two natural sources, i.e. either from normal subgroups or from (measurable) factor maps  of the algebraic actions $SL_n(\mathbb{Z})\curvearrowright\widehat{d\mathbb{Z}^n}\cong \mathbb{T}^n$ inherent to the semi-direct product structure of $G_n$. We remark that a similar result in the amenable setting has been obtained in \cite{adjs}.

With this theorem at hand, we can prove the existence of a unique maximal Haagerup invariant von Neumann subalgebra as in \cite{jiangliu}.
\begin{cor}\label{cor}
Let $G_n=\mathbb{Z}^n\rtimes SL_n(\mathbb{Z})$. Then $L(\mathbb{Z}^n\rtimes \{\pm I_n\})$ for even $n$; respectively $L(\mathbb{Z}^n)$ for odd $n$ is the unique maximal Haagerup $G_n$-invariant von Neumann subalgebra in $L(G_n)$, where $I_n$ denotes the identity matrix in $SL_n(\mathbb{Z})$.
\end{cor}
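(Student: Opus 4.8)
The plan is to deduce the corollary directly from the main theorem by sorting the $G_n$-invariant subalgebras into those which are Haagerup and those which are not, and then identifying the largest one among the Haagerup ones. Since Theorem~\ref{thm} gives a complete list—each invariant $P$ is either $L(H)$ for a normal subgroup $H\lhd G_n$ or one of the algebras $A_d$—the argument reduces to two finite pieces of bookkeeping: determining which $L(H)$ have the Haagerup property, and checking that the $A_d$ never enlarge the picture.

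First I would determine the normal subgroups $H\lhd G_n$ and decide which give Haagerup $L(H)$. A subgroup $H$ must be $SL_n(\mathbb{Z})$-invariant in its intersection with $\mathbb{Z}^n$, and by the (non)simplicity structure of $SL_n(\mathbb{Z})$ modulo its center, the normal subgroups are built from: a characteristic subgroup $d\mathbb{Z}^n\le\mathbb{Z}^n$, together with a normal subgroup of the $SL_n(\mathbb{Z})$-factor. The key point is that $L(H)$ has the Haagerup property if and only if $H$ is Haagerup as a group, and any $H$ whose image in $SL_n(\mathbb{Z})$ is infinite will contain an infinite property~(T) subgroup (for $n\ge 3$, $SL_n(\mathbb{Z})$ itself has property~(T); for $n=2$ one uses that a finite-index nonamenable free subgroup sits inside a larger rigid pair, but more simply the relevant normal subgroups of $SL_2(\mathbb{Z})$ modulo center are finite or contain a nonabelian free group which is still Haagerup—so here the $n=2$ bookkeeping must be done separately and carefully). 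Thus for $n\ge 3$ the only normal subgroups $H$ with $L(H)$ Haagerup are those with finite image in $SL_n(\mathbb{Z})$; since the only finite normal subgroups of $SL_n(\mathbb{Z})$ are trivial for odd $n$ and $\{\pm I_n\}$ for even $n$, and $\mathbb{Z}^n$ is amenable hence Haagerup, the maximal such $H$ is $\mathbb{Z}^n\rtimes\{\pm I_n\}$ (even $n$) or $\mathbb{Z}^n$ (odd $n$).

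Next I would dispose of the algebras $A_d$. Each $A_d$ is a proper subalgebra of $L(d\mathbb{Z}^n)\subseteq L(\mathbb{Z}^n)$, and $L(\mathbb{Z}^n)$ is abelian hence certainly Haagerup; any von Neumann subalgebra of a Haagerup finite von Neumann algebra is again Haagerup, so every $A_d$ is Haagerup. However $A_d\subseteq L(\mathbb{Z}^n)$ is contained in the candidate maximal algebra in the odd case directly, and in the even case one checks $A_d\subseteq L(\mathbb{Z}^n)\subseteq L(\mathbb{Z}^n\rtimes\{\pm I_n\})$ as well, so the $A_d$ never exceed the maximal normal-subgroup algebra already found. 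Hence they do not affect the identification of the maximum.

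Finally I would assemble the uniqueness and maximality statement. Let $M$ denote the proposed algebra ($L(\mathbb{Z}^n\rtimes\{\pm I_n\})$ or $L(\mathbb{Z}^n)$). It is $G_n$-invariant, it is Haagerup, and every other $G_n$-invariant Haagerup subalgebra—whether an $L(H)$ or an $A_d$—is contained in $M$ by the two steps above; this simultaneously proves $M$ is maximal and that it is the unique maximal one (a maximal Haagerup invariant algebra containing every Haagerup invariant algebra is automatically unique). I expect the main obstacle to be the $n=2$ case of the group-theoretic bookkeeping: unlike $n\ge 3$, the factor $SL_2(\mathbb{Z})$ is virtually free and does not have property~(T), so one cannot simply invoke property~(T) to force finite image, and one must instead verify directly—e.g. via the explicit classification of normal subgroups of $G_2$ and the structure of $SL_2(\mathbb{Z})\cong \mathbb{Z}/4\ast_{\mathbb{Z}/2}\mathbb{Z}/6$—that no normal $H$ with infinite $SL_2(\mathbb{Z})$-image gives a Haagerup $L(H)$ strictly larger than $L(\mathbb{Z}^2\rtimes\{\pm I_2\})$, which is precisely where the argument must match the earlier Jiang--Liu computation.
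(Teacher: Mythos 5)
Your overall route is the same as the paper's: apply Theorem~\ref{thm} to reduce to the two families $L(H)$ and $A_d$, observe that every $A_d$ sits inside $L(\mathbb{Z}^n)$ and hence inside the candidate algebra, and then show that every normal Haagerup subgroup $H\lhd G_n$ is contained in $\mathbb{Z}^n\rtimes\{\pm I_n\}$ (even $n$), resp.\ $\mathbb{Z}^n$ (odd $n$). The paper packages this last group-theoretic step as Lemma~\ref{lem: Haagerup radical in G_n} (existence and identification of the Haagerup radical). For $n\ge 3$ your sketch is close to the paper's argument, but the step ``any $H$ with infinite image in $SL_n(\mathbb{Z})$ contains an infinite property (T) subgroup'' is asserted, not proved, and you cannot shortcut it by pushing the Haagerup property to the image, since Haagerup does not pass to quotients. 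The claim is true and fixable in two ways: either show that $H\cap SL_n(\mathbb{Z})$ has finite index in the image $K$ (write $H/(H\cap\mathbb{Z}^n)$ as the graph of a cocycle $K\to\mathbb{Z}^n/d\mathbb{Z}^n$ and intersect two finite-index subgroups of $(\mathbb{Z}^n/d\mathbb{Z}^n)\rtimes K$), or, as in Lemma~\ref{lem: Haagerup radical in G_n}, pass to $H\mathbb{Z}^n=\mathbb{Z}^n\rtimes K$, which is Haagerup because it is an extension of the amenable quotient $H\mathbb{Z}^n/H$ by the Haagerup normal subgroup $H$, and then note that $K$ embeds in $H\mathbb{Z}^n$ as an honest subgroup, so $K$ is Haagerup, contradicting property (T) of the finite-index subgroup $K$ furnished by Margulis's normal subgroup theorem.

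The genuine gap is at $n=2$, where the repair you propose would fail. A normal subgroup with infinite image, such as $\mathbb{Z}^2\rtimes K$ for $K$ an infinite normal (hence non-amenable, since the amenable radical of $SL_2(\mathbb{Z})$ is $\{\pm I_2\}$, cf.\ Lemma~\ref{3.3}) subgroup of $SL_2(\mathbb{Z})$, is assembled entirely from Haagerup pieces: $\mathbb{Z}^2$ is abelian and $K$ is Haagerup, being a subgroup of the virtually free group $SL_2(\mathbb{Z})$. Consequently no bookkeeping with the decomposition $SL_2(\mathbb{Z})\cong\mathbb{Z}/4\ast_{\mathbb{Z}/2}\mathbb{Z}/6$ or with a list of normal subgroups of $G_2$ can rule out the Haagerup property of such an $H$; amalgams over finite subgroups, passage to subgroups, and extensions with amenable quotient all preserve it, so that structure points in the wrong direction. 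The only available obstruction is relative property (T) of the pair $(\mathbb{Z}^2\rtimes K,\mathbb{Z}^2)$ for every non-amenable $K\le SL_2(\mathbb{Z})$ (Burger), which is precisely the input behind \cite[Proposition 2.10]{jiangskalski} that the paper invokes inside Lemma~\ref{lem: Haagerup radical in G_n}. Without importing this relative rigidity (or simply quoting the $n=2$ result of Jiang--Skalski/Jiang--Liu as a black box), your $n=2$ case does not close.
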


\paragraph{\textbf{On the method of proof}}

Comparing our proof for $n>2$ with the proof given in \cite{jiangliu} for $n=2$, the key difference lies in the strategies used to classify non-amenable invariant subfactors, which is also the hard core for the whole proof.

In \cite{jiangliu}, the classification (for non-amenable invariant subalgebras) directly follows from   \cite[Theorem 5.1]{cds} since $G_2$ is an icc exact group which satisfies condition 2) in \cite[Theorem 5.1]{cds}. We remark that the proof of this theorem is largely based on techniques developed within Popa's powerful deformation/rigidity theory framework. It is not a surprise that this strategy is not applicable to $G_n$ for $n>2$.  Indeed, Chifan-Sinclair \cite[Theorem A]{cs} proved that if a group $G$ satisfies this condition, then $L(G)$ is solid in the sense of Ozawa \cite{ozawa_solid}, i.e. $A'\cap L(G)$ is amenable for every diffuse von Neumann subalgebra $A\subset L(G)$. It is clear that for $n>2$, if we take $A=L(\mathbb{Z})\subset L(\mathbb{Z}^n)\subset L(G_n)$, where $\mathbb{Z}=\langle e_1\rangle$ with $e_1=(1,0,\ldots, 0)^t\in \mathbb{Z}^n$, then $L(SL_2(\mathbb{Z}))\subset A'\cap L(G_n)$. Thus $L(G_n)$ is not solid.

To handle the abovementioned issue, we apply the character approach developed in \cites{jz,dj} while studying the ISR property. The new ingredient is Lemma \ref{3.2}, where we proved that $\mathbb{Z}^n$ satisfies a modified version of the so-called non-factorizable regular character property as introduced in \cite{dj}.

In fact, the above approach also works uniformly for all $n\geq 2$ (see Proposition \ref{prop: classify_invariant_subfactors}), although we need to split the whole proof according to the parity of $n$ due to the existence of non-trivial centers in $SL_n(\mathbb{Z})$ for even $n$. Even for $n=2$, this yields an alternative proof for the main theorem in the previous work \cite{jiangliu}. Indeed, instead of splitting the proof by considering amenable/non-amenable invariant subalgebras $P$ as in \cite{jiangliu}, we now proceed by first classifying the center $\mathcal{Z}(P)$ (see Lemma \ref{3.5}).

Finally, we note that although our proof makes no direct appeal to the property (T) of $G_n (n>2)$ or the relative property (T) for the pair $(G_2, \mathbb{Z}^2)$, the rigidity phenomena for the action $SL_n(\mathbb{Z})\curvearrowright \mathbb{T}^n$ that we do use, in particular, the classification of its ergodic measures \cite[Proposition 9]{burger} and of its factor actions \cite[Example 5.9]{wit}, are themselves indirect manifestations of rigidity for the ambient group or relative rigidity for the group-subgroup pair. Thus property (T) for $G_n (n>2)$ or the relative property (T) for the pair $(G_2, \mathbb{Z}^2)$ implicitly underpins our results. \\

\paragraph{\textbf{Organization of the paper}} The detailed plan of the article is as follows: after this introduction, in Section 2 we prepare lemmas on group aspects and dynamical properties related to $G_n$, and prove the non-factorizable property for $SL_n(\mathbb{Z})$-invariant characters on $\mathbb{Z}^n$ (Lemma \ref{3.2}). Besides the proof of Corollary \ref{cor}, the proof of Theorem \ref{thm} takes up the whole section 3. \\

\paragraph{\textbf{Acknowledgements}} 
J.Y. is grateful to Dr. Amrutam Tattwamasi for asking \cite[Question 4.1]{jiangliu} which started this project, also for taking time reading an early version of this paper and sending comments which help improving the presentation greatly. He also thanks Dr. Amrutam Tattwamasi, Prof. Artem Dudko and Prof. Adam Skalski for the collaboration \cite{adjs}, from which he has benefited a lot.
The work is partially supported by National Natural Science Foundation of China (Grant No. 12471118).

\section{Preliminaries}\label{section: preliminaries}

In this section, we prepare lemmas on group, character and dynamical properties related to $G_n=\mathbb{Z}^n\rtimes SL_n(\mathbb{Z})$.

\begin{lem}\label{3.1}
Let $G = SL_{n}(\mathbb{Z})$ for $n\ge 2$. Consider the natural group action $G=SL_n(\mathbb{Z})\curvearrowright\mathbb{T}^n$. Then for any given positive integer $k\ge1$, there exists at most countably many  points in $\mathbb{T}^{n}$ whose $G$-orbit contains $k$-many points.
\end{lem}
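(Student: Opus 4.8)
The plan is to translate the orbit-size condition into a statement about stabilizer subgroups, and then to bound, for each finite-index subgroup, the size of its common fixed-point set in $\mathbb{T}^n$. First I would note that by the orbit--stabilizer theorem a point $x\in\mathbb{T}^n$ satisfies $|G\cdot x|=k$ precisely when $\stab_G(x)$ is a subgroup of index $k$ in $G=SL_n(\mathbb{Z})$; in particular $\stab_G(x)$ is a finite-index subgroup. Hence the set of points with orbit size $k$ is contained in $\bigcup_H \fix(H)$, where $H$ ranges over the index-$k$ subgroups of $G$ and $\fix(H)=\{x\in\mathbb{T}^n:\ hx=x\ \text{for all}\ h\in H\}$. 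Recalling that $G$ acts on $\mathbb{T}^n=\mathbb{R}^n/\mathbb{Z}^n$ by the linear action, $x\in\fix(H)$ means exactly that $(h-I)x\in\mathbb{Z}^n$ for every $h\in H$.

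The heart of the argument is to show that $\fix(H)$ is finite whenever $H\le G$ has finite index. The key input is that any finite-index subgroup $H$ of $SL_n(\mathbb{Z})$ is Zariski dense in $SL_n(\mathbb{C})$: indeed $SL_n(\mathbb{Z})$ is Zariski dense in the connected group $SL_n(\mathbb{C})$, and writing $SL_n(\mathbb{Z})$ as a finite union of cosets $g_iH$ and passing to Zariski closures exhibits $SL_n(\mathbb{C})$ as a finite union of the closed translates $g_i\overline{H}$, which by irreducibility forces $\overline{H}=SL_n(\mathbb{C})$. Since $\{g\in SL_n(\mathbb{C}):\ \det(g-I)\neq 0\}$ is a nonempty Zariski-open subset (it contains, e.g., a diagonal matrix of determinant $1$ all of whose eigenvalues differ from $1$), Zariski density yields an element $h\in H$ with $\det(h-I)\neq 0$. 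For this $h$ the endomorphism $x\mapsto (h-I)x$ of $\mathbb{T}^n$ has kernel of cardinality $|\det(h-I)|$, so $\fix(H)\subseteq \fix(h)=\ker(h-I)$ is finite. (Equivalently, if $\fix(H)$ were infinite it would contain a nontrivial subtorus, producing a nonzero $H$-invariant rational subspace of $\mathbb{Q}^n$ that is fixed pointwise, contradicting the irreducibility of the standard $SL_n$-representation inherited by the Zariski-dense $H$.)

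Finally I would assemble the pieces. Since $G$ is countable it has only countably many finite-index subgroups, so the set of points with orbit size $k$, being contained in the countable union $\bigcup_H\fix(H)$ of the finite sets $\fix(H)$, is countable, as claimed. (Invoking in addition M.\ Hall's theorem that the finitely generated group $SL_n(\mathbb{Z})$ has only finitely many subgroups of index $k$, one even obtains that this set is finite.) The main obstacle is precisely the finiteness of $\fix(H)$: everything hinges on ruling out a positive-dimensional fixed locus, i.e.\ on the irreducibility of the standard representation persisting for finite-index subgroups, which is exactly what Zariski density supplies.
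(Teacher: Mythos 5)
Your proof is correct in substance, and its skeleton matches the paper's: both reduce via orbit--stabilizer to the statement that every finite-index subgroup of $SL_n(\mathbb{Z})$ contains an element $h$ with $\det(h-I_n)\neq 0$, which forces the common fixed-point set of that subgroup into a finite (or at worst countable) set. The genuine difference lies in how that element is produced, and in the final count. The paper invokes a cited spectral fact (\cite[Proposition 3.4]{jiang_jfa}: there is $s\in SL_n(\mathbb{Z})$ all of whose eigenvalues have modulus different from $1$), observes that every power $s^M$ then has no eigenvalue $1$, and uses that a suitable power of $s$ lies in any prescribed finite-index subgroup; countability then comes from parametrizing each fixed point as the unique solution attached to a pair (group element, integer vector). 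You instead argue by algebraic geometry: finite-index subgroups of $SL_n(\mathbb{Z})$ inherit Zariski density in the connected group $SL_n(\mathbb{C})$, so they must meet the nonempty open locus $\{\det(g-I)\neq 0\}$. Your route is more self-contained (no appeal to the cited proposition, only to the standard density of $SL_n(\mathbb{Z})$ in $SL_n(\mathbb{C})$) and it delivers a stronger conclusion --- the set of points with orbit size $k$ is actually finite, not merely countable.

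One step needs repair. The claim ``since $G$ is countable it has only countably many finite-index subgroups'' is false for general countable groups: for instance $\bigoplus_{\mathbb{N}}\mathbb{Z}/2\mathbb{Z}$ has uncountably many subgroups of index $2$. What makes your union $\bigcup_H \mathrm{Fix}(H)$ run over a countable (indeed finite) index set is precisely the remark you relegate to a parenthesis: $SL_n(\mathbb{Z})$ is finitely generated, so by M.\ Hall's theorem it has only finitely many subgroups of any given index $k$. That observation is not an optional strengthening but the actual justification, and should be promoted to the main line of the argument; once it is, everything goes through.
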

\begin{proof}
For any $s=(s_{ij})_{1\le i,j \le n}\in G$ and $z=(z_{1}, z_{2},\ldots, z_{n})^{t} \in \mathbb{T}^n$, the action is given by \begin{equation*}
z\overset{s}{\mapsto}sz=
\left(\begin{smallmatrix}
z_{1}^{s'_{11}}z_{2}^{s'_{12}}\cdots\ z_{n}^{s'_{1n}}\\
z_{1}^{s'_{21}}z_{2}^{s'_{22}}\cdots\ z_{n}^{s'_{2n}}\\
 \cdots\cdots\\
z_{1}^{s'_{n1}}z_{2}^{s'_{n2}}\cdots\ z_{n}^{s'_{nn}}
\end{smallmatrix}\right),
\end{equation*}
where $s'=(s^{-1})^{T}=(s'_{ij})_{1\le i,j \le n}$.

To avoid symbol conflicts, we directly use $\sqrt{-1}$ to represent the imaginary unit $i$. Write $z_{i}=e^{2\pi \sqrt{-1} \theta_{i}} $ for $i = 1,\dots,n$. Assume that the orbit of $z$ has size $k$, i.e. $\# \text{orb}(z)=\#\{gz:g\in G\}=k$. By the Orbit-Stabilizer Theorem, $[G:\text{stab}(z)]=k$, where $\text{stab}(z)=\{g\in G :gz=z\}$. Take any $s\in \text{stab}(z)$, the equality $sz=z$ translates to 
\begin{equation*}
\left(\begin{smallmatrix}
    e^{2\pi\sqrt{-1}(s'_{11}\theta_{1} +s'_{12}\theta_{2}+\cdots+s'_{1n}\theta_{n})}\\
     e^{2\pi\sqrt{-1}(s'_{21}\theta_{1} +s'_{22}\theta_{2}+\cdots+s'_{2n}\theta_{n})}\\
     \cdots\\
      e^{2\pi\sqrt{-1}(s'_{n1}\theta_{1} +s'_{n2}\theta_{2}+\cdots+s'_{nn}\theta_{n})}
\end{smallmatrix}\right)
=\left(\begin{smallmatrix}
    e^{2\pi\sqrt{-1}\theta_{1}}\\
     e^{2\pi\sqrt{-1}\theta_{2}}\\
     \cdots\\
      e^{2\pi\sqrt{-1}\theta_{n}}
\end{smallmatrix}\right),
\end{equation*}
which is equivalent to $\sum\limits_{j=1}^ns'_{ij}\theta_{j}=\theta_{j}+2k_i\pi,\forall i=1,\ldots,n$, where $k_{1},\ldots,k_{n}$ are integers. Thus the vector $\boldsymbol{\theta} = (\theta_{1},\dots,\theta_n)^t$ satisfies the linear system $((s^{-1})^T -I_{n})\boldsymbol{\theta} \in (2\pi\mathbb{Z})^n$. If there exists an $s\in \text{stab}(z)$ such that $\text{det}((s^{-1})^{T}-I_{n})\ne0$, then $\boldsymbol{\theta} = ((s^{-1})^T -I_{n})^{-1}\boldsymbol{b}$ for some interger vector $\boldsymbol{b}\in (2\pi\mathbb{Z})^n$. 

Clearly, to finish the proof, it suffices to check that $\{s\in G:\text{det}((s^{-1})^{T}-I_{n})=0\}$ could not contain any finite index subgroups of $G$. Note that $\text{det}((s^{-1})^{T}-I_{n})=0$ iff $\text{det}(s-I_{n})=0$.

Note that by \cite[Proposition 3.4]{jiang_jfa}, there exists some $s\in G$ such that the absolute values of all eigenvalues of $s$ are not equal to one. Hence, for any $M\geq 1$, the eigenvalues of  $s^M$ are not equal to one. Therefore, $\text{det}(s^M-I_{n})\neq 0$. Since we may take $M\geq 1$ such that $s^M$ lies in any given finite index subgroup, this finishes the proof.
\end{proof}

Recall that a character on a countable discrete group $G$ is a map $\phi: G\rightarrow \mathbb{C}$ such that $\phi$ is untial (i.e. $\phi(e)=1$), positive definite (see \cite[Definition 1.B.1]{bd}) and conjugate invariant (i.e. $\phi(sgs^{-1})=\phi(g)$ for all $s,g\in G$). Note that the conjugation  invariance condition for a character holds automatically on abelian groups $G$.
By Bochner's theorem, every unital positive definite function on a countable discrete abelian group $A$ corresponds to a probability measure on the Pontryagin dual $\widehat{A}$ \cite[Theorem D.2.2]{bdv}. 

The following lemma is the new ingredient needed to apply the character approach, which is inspired by \cite[Proposition 3.13]{dj}.

\begin{lem}\label{3.2}
Let $n\geq 2$. Let $\phi, \psi$ be two characters on $\mathbb{Z}^n$. Write $\phi = \int_{\mathbb{T}^n} d\mu$, $\psi = \int_{\mathbb{T}^{n}} d\nu$, where $\mu$ and $\nu$ are probability measures on $\mathbb{T}^n$. Assume that both $\mu$ and $\nu$ are $SL_{n}(\mathbb{Z})$-invariant and $\phi(g)\psi(g) = 0$ for all $e\ne g\in\mathbb{Z}^n$. Then $\phi \equiv \delta_{e}$ or $\psi \equiv \delta_{e}$, where $e$ denotes the neutral element in $\mathbb{Z}^n$. 
\end{lem}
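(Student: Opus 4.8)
The plan is to reformulate the hypothesis as a single identity about convolution of measures and then exploit the classification of ergodic invariant measures. By Bochner's theorem, as recalled just before the statement, I write $\phi=\widehat{\mu}$ and $\psi=\widehat{\nu}$, where $\widehat{\mu}(g)=\int_{\mathbb{T}^n} z^{g}\,d\mu(z)$ and $z^{g}:=z_1^{g_1}\cdots z_n^{g_n}$. The hypothesis $\phi(g)\psi(g)=0$ for all $e\neq g$, together with $\phi(e)\psi(e)=\mu(\mathbb{T}^n)\,\nu(\mathbb{T}^n)=1$, says exactly that $\phi\psi=\delta_{e}$ as a function on $\mathbb{Z}^n$. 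Since $\widehat{\mu\ast\nu}=\widehat{\mu}\,\widehat{\nu}=\phi\psi$ and $\delta_{e}=\widehat{m}$, where $m$ denotes normalized Haar measure on $\mathbb{T}^n$, injectivity of the Fourier transform on measures gives the clean reformulation $\mu\ast\nu=m$. Thus it suffices to show: if two $SL_n(\mathbb{Z})$-invariant probability measures on $\mathbb{T}^n$ convolve to Haar measure, then one of them is already Haar, which is equivalent to $\phi\equiv\delta_e$ or $\psi\equiv\delta_e$.

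Next I would split off the Haar part of each measure. By the classification of ergodic $SL_n(\mathbb{Z})$-invariant probability measures on $\mathbb{T}^n$ \cite[Proposition 9]{burger}, every such ergodic measure is either $m$ itself or the uniform measure on a finite orbit of a rational (torsion) point of $\mathbb{T}^n$. Feeding this into the ergodic decomposition, I can write $\mu=a\,m+\mu_0$ and $\nu=b\,m+\nu_0$ with $a,b\in[0,1]$, where $\mu_0,\nu_0$ are $SL_n(\mathbb{Z})$-invariant positive measures of total mass $1-a$ and $1-b$ respectively, each supported on the countable set $Q:=(\mathbb{Q}/\mathbb{Z})^n$ of rational points. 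Using that $m$ is absorbing under convolution, i.e. $m\ast\rho=\rho(\mathbb{T}^n)\,m$ for any finite measure $\rho$ (check on Fourier coefficients), expanding the product yields $\mu\ast\nu=(a+b-ab)\,m+\mu_0\ast\nu_0$. Comparing with $\mu\ast\nu=m$ gives the identity $\mu_0\ast\nu_0=(1-a)(1-b)\,m$.

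Finally I would run a support/singularity argument on this identity. The convolution $\mu_0\ast\nu_0$ is supported on $Q+Q=Q$, which is a subgroup of $\mathbb{T}^n$ that is countable and hence $m$-null, so $\mu_0\ast\nu_0$ is singular with respect to $m$; on the other hand its right-hand side $(1-a)(1-b)\,m$ is absolutely continuous with respect to $m$. A nonzero measure cannot be simultaneously singular and absolutely continuous, so $(1-a)(1-b)=0$, i.e. $a=1$ or $b=1$. This forces $\mu=m$ or $\nu=m$, that is $\phi\equiv\delta_e$ or $\psi\equiv\delta_e$, as desired. The one genuinely nonelementary input is the measure classification of \cite{burger}: it is precisely what forbids a diffuse singular invariant part and confines the non-Haar parts $\mu_0,\nu_0$ to the countable set $Q$, which is exactly what makes the final dichotomy work. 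The convolution reformulation $\mu\ast\nu=m$ is the key simplification that converts the pointwise vanishing hypothesis into one clean identity; everything after it is routine.
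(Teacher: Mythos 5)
Your proof is correct, and it takes a genuinely different route from the paper's. Both arguments rest on Burger's classification \cite{burger} of ergodic $SL_n(\mathbb{Z})$-invariant measures on $\mathbb{T}^n$, but the paper argues by contradiction with a quantitative estimate: assuming neither measure is Haar, it selects finitely many atomic ergodic components of $\mu$ and of $\nu$ carrying more than half of the respective atomic masses, finds a finite-index subgroup $H\leq SL_n(\mathbb{Z})$ fixing all the corresponding atoms pointwise, and evaluates $\phi,\psi$ at elements $s=g^{-1}v-v\neq e$ with $g\in H$, $v\in\mathbb{Z}^n$; at such $s$ each selected atomic component contributes $1$, giving a positive lower bound on both $|\phi(s)|$ and $|\psi(s)|$ and contradicting $\phi(s)\psi(s)=0$. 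You instead compress the hypothesis into the single identity $\mu\ast\nu=m$ via injectivity of the Fourier--Stieltjes transform, split off the Haar parts, use that Haar is absorbing under convolution, and conclude by a singular-versus-absolutely-continuous dichotomy, since $\mu_0\ast\nu_0$ is carried by a countable set. Your route avoids the $\varepsilon$-bookkeeping and the finite-index-subgroup construction entirely, and it isolates a cleaner, slightly stronger statement of independent interest (two invariant probability measures whose convolution is Haar must include Haar itself). Two remarks. First, the one fact you use beyond \cite{burger} --- that the non-Haar ergodic measures live on the torsion subgroup $(\mathbb{Q}/\mathbb{Z})^n$ --- deserves a line of justification: it is true because a finite-orbit point has finite-index stabilizer, which must contain an element $s$ with $\det(s-I_n)\neq 0$, forcing rationality; this is exactly the argument in the proof of Lemma \ref{3.1}, and in fact the countability statement of Lemma \ref{3.1} (the union of all finite orbits is countable) already suffices for your singularity step, so you could cite it verbatim instead of asserting rationality. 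Second, the paper's estimate-based argument stays closer to the ``non-factorizable character'' framework of \cite{dj} that this lemma is designed to feed into --- it exhibits explicit group elements at which both characters are simultaneously nonvanishing --- and it never needs injectivity of the Fourier transform on measures; your argument is the more conceptual of the two.
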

\begin{proof}
By \cite[Proposition 9]{burger}, we know that both $\mu$ and $\nu$ are convex combinations of ergodic $SL_{n}(\mathbb{Z})$-invariant measures which are either the Haar measure or atomic measures supported on a finite $SL_{n}(\mathbb{Z})$-invariant subsets in $\mathbb{T}^n$.

Write $\mu=\lambda \cdot\text{Haar}\ +\sum\limits_{i=1}^{\infty}\lambda_{i}\mu _{i}\ ,\nu=\lambda' \cdot\text{Haar}\ +\sum \limits_{i=1}^{\infty}\lambda_{i}'\nu _{i}$, where all $\mu_{i}$ and $\nu_{i}$ denote some atomic $SL_{n}(\mathbb{Z})$-invariant measure on $\mathbb{T}^n$ and Haar denotes the Haar measure on $\mathbb{T}^n$. Note that we have a countable sum by Lemma \ref{3.1} and the fact that $1=\lambda \ +\sum\limits _{i=1}^{\infty}\lambda_{i}=\lambda' \ +\sum\limits _{i=1}^{\infty}\lambda_{i}'$ with all coefficients are non-negative.

Note that for any  $s\in \mathbb{Z}^n$, we have 
\begin{align*}
    \phi(s)=\lambda\delta_e(s)+\sum\limits_{i=1}^{\infty}\lambda_{i}\int_{\mathbb{T}^{n}}\langle\chi,s\rangle d\mu_{i}(\chi),
\end{align*}
\begin{align*}
\psi(s)=\lambda'\delta_e(s)+\sum\limits_{i=1}^{\infty}\lambda_{i}'\int_{\mathbb{T}^{n}}\langle\chi,s\rangle d\nu_{i}(\chi).
\end{align*}
In the above expression, if we write $s = (s_1,\dots,s_n)^t \in \mathbb{Z}^n$ and $\chi = (\chi_{1},\dots,\chi_n)^t \in \mathbb{T}^n$, then the pairing $\mathbb{T}^n\times \mathbb{Z}^n\overset{\langle -,-\rangle}{\longrightarrow}\mathbb{T}$ is defined by $\langle \chi,s\rangle:=\prod\limits_{i=1}^{n}\chi_{i}^{s_{i}}\in \mathbb{T}$.

We aim to show that either $\lambda=1$ or $\lambda'=1$. Assume this does not hold, then $0\le \lambda < 1 $ and $0\le \lambda'<1$.

Since $\mu_{i}$ is an atomic measure supported on some $SL_{n}(\mathbb{Z})$-invariant finite subset in $\mathbb{T}^n$, there exsits some finite index subgroup $G_{i}\in SL_{n}(\mathbb{Z})$ such that $\forall g\in G_{i}, gx =x$ for all $x \in \text{supp}(\mu_{i})$. Note that this means that $\forall v \in \mathbb{Z}^n$, we have $\langle gx,v \rangle = \langle x, v\rangle$, i.e. $\langle x,g^{-1}v -v \rangle = 1$ for all $v\in \mathbb{Z}^n$ and all $x\in \text{supp}(\mu_{i})$. Correspondingly, we denote by $F_{i}$ the finite index subgroup associated to $v_{i}$.

Write $c=\sum\limits_{i=1}^{\infty}\lambda_{i}, c'=\sum\limits_{i=1}^{\infty}\lambda'_{i}$. Since $0 <c \le 1$ and $0<c' \le 1$, we may find some $I$ large enough such that $\sum\limits_{i=1}^{I}\lambda_{i}>\frac{c}{2}$ and $\sum\limits_{i=1}^{I}\lambda'_{i}>\frac{c'}{2}$. Then by what we explained above, there exists some finite index subgroup $H$ of $SL_{n}(\mathbb{Z})$ such that $\forall g \in H$, we have $\langle x,g^{-1}v-v \rangle = 1\ ,\forall v\in \mathbb{Z}^{n},\ \forall x\in (\bigcup\limits_{i=1}^{I}\text{supp}(\mu _{i}))\bigcup(\bigcup\limits_{j=1}^{I}\text{supp}(\nu _{j}))\subset \mathbb{T}^n.$ Note that here, we have used the previous mentioned definition of pairing $\langle -,-\rangle.$
Specifically, we may take $H = \bigcap\limits_{i=1}^{I}(G_{i}\cap F_{i})$, which is still of finite index because a finite intersection of finite index subgroups has finite index. Now, for all $g\in H$ and all $v\in \mathbb{Z}^n$, we have \begin{align*}
    \int_{\mathbb{T}^n}\langle \chi,g^{-1}v-v \rangle d\mu_{i}(\chi)=1=\int_{\mathbb{T}^{n}}\langle \chi,g^{-1}v-v\rangle d\nu_{i}(\chi),1\le i\le I.
\end{align*}
Therefore, we deduce that for any $g\in H$ and any $v\in \mathbb{Z}^n$ with $s:=g^{-1}v-v\ne e$, then we have 
\begin{align*}
    |\phi(s)|&=|0+\sum\limits_{i=1}^{I}\lambda_{i}+\sum\limits_{i=I+1}^{\infty}\lambda_{i}\int_{\mathbb{T}^{n}}\langle\chi,s\rangle d\mu_{i}(\chi)|\\
    &\ge \sum\limits_{i=1}^{I}\lambda_{i}-\sum\limits_{i=I+1}^{\infty}\lambda_{i}=\sum\limits_{i=1}^{I}\lambda_{i}-(c-\sum\limits_{i=1}^{I}\lambda_{i})
    =2\sum\limits_{i=1}^{I}\lambda_{i}-c>0.
\end{align*}
Therefore, $\phi(s) \ne 0$. Similarly, $\psi(s) \ne 0$. This yields a contradiction to $\phi(s)\psi(s) = 0$.
\end{proof}

It is well-known that inside any countable discrete group $G$, there is a unique largest amenable normal subgroup, which is called the amenable radical of $G$, denote by $Rad(G)$. Note that $Rad(G)$ contains all amenable normal subgroups of $G$. We detemine the amenable radical of $G_n$ in the following lemma.

\begin{lem}\label{3.3}
Let $G_{n}=\mathbb{Z}^n\rtimes SL_n(\mathbb{Z}),n\ge 2$, 
\begin{equation*}
\text{Rad}(G_{n}) = 
\left\{
\begin{aligned}
&\mathbb{Z}^n ,  n~\text{is odd}\\
&\mathbb{Z}^n \rtimes \{\pm I_{n}\},\ n~\text{is even},
\end{aligned}
\right.
\end{equation*}
where $\text{Rad}(G_{n})$ is the amenable radical of $G_{n}$ and $I_{n}$ denotes the $n\times n$ identity matrix.
\end{lem}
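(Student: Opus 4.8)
The plan is to sandwich $\operatorname{Rad}(G_n)$ between the asserted subgroup and the preimage of $\operatorname{Rad}(SL_n(\mathbb{Z}))$ under the quotient map $\pi\colon G_n\to SL_n(\mathbb{Z})$, and then to identify $\operatorname{Rad}(SL_n(\mathbb{Z}))$ explicitly.

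First I would establish the lower bound. Since $\mathbb{Z}^n=\ker\pi$ is an abelian, hence amenable, normal subgroup of $G_n$, we always have $\mathbb{Z}^n\subseteq\operatorname{Rad}(G_n)$. When $n$ is even, $-I_n\in SL_n(\mathbb{Z})$ (as $\det(-I_n)=(-1)^n=1$) and $\{\pm I_n\}$ is central in $SL_n(\mathbb{Z})$; a direct computation in the semidirect product shows
\[
(v,A)(u,\varepsilon I_n)(v,A)^{-1}=\big((1-\varepsilon)v+Au,\ \varepsilon I_n\big)\in\mathbb{Z}^n\rtimes\{\pm I_n\}
\]
for all $(v,A)\in G_n$ and $\varepsilon\in\{\pm1\}$, so $\mathbb{Z}^n\rtimes\{\pm I_n\}$ is normal in $G_n$; being an index-two extension of $\mathbb{Z}^n$ it is virtually abelian, hence amenable, giving $\mathbb{Z}^n\rtimes\{\pm I_n\}\subseteq\operatorname{Rad}(G_n)$.

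For the upper bound, set $R=\operatorname{Rad}(G_n)$. As $R\supseteq\mathbb{Z}^n=\ker\pi$ and $R$ is amenable and normal, its image $\pi(R)\cong R/\mathbb{Z}^n$ is an amenable normal subgroup of $SL_n(\mathbb{Z})$, so $\pi(R)\subseteq\operatorname{Rad}(SL_n(\mathbb{Z}))$ and consequently $R=\pi^{-1}(\pi(R))\subseteq\pi^{-1}(\operatorname{Rad}(SL_n(\mathbb{Z})))$. It therefore remains to show that $\operatorname{Rad}(SL_n(\mathbb{Z}))$ equals the center $Z(SL_n(\mathbb{Z}))$, which consists of the scalar matrices $\lambda I_n$ with $\lambda\in\{\pm1\}$ and $\lambda^n=1$, i.e.\ $\{\pm I_n\}$ for even $n$ and $\{I_n\}$ for odd $n$. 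Pulling this back gives $R\subseteq\mathbb{Z}^n$ for odd $n$ and $R\subseteq\mathbb{Z}^n\rtimes\{\pm I_n\}$ for even $n$, which combined with the lower bound yields the two claimed equalities.

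The computation of $\operatorname{Rad}(SL_n(\mathbb{Z}))$ is the main point. For $n\ge 3$, Margulis's normal subgroup theorem shows every normal subgroup of $SL_n(\mathbb{Z})$ is either central (hence contained in $\{\pm I_n\}$) or of finite index; a finite-index subgroup of the non-amenable group $SL_n(\mathbb{Z})$ is non-amenable, so every amenable normal subgroup is central and $\operatorname{Rad}(SL_n(\mathbb{Z}))=Z(SL_n(\mathbb{Z}))$. For $n=2$, I would pass to $PSL_2(\mathbb{Z})\cong\mathbb{Z}/2\mathbb{Z}\ast\mathbb{Z}/3\mathbb{Z}$; this non-elementary free product has trivial amenable radical, so any amenable normal subgroup of $SL_2(\mathbb{Z})$ maps trivially into it and is hence contained in $\{\pm I_2\}=Z(SL_2(\mathbb{Z}))$, while $\{\pm I_2\}$ is itself amenable and normal, giving $\operatorname{Rad}(SL_2(\mathbb{Z}))=\{\pm I_2\}$. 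The only genuine subtlety is invoking these structural facts (Margulis's theorem and the free-product description of $PSL_2(\mathbb{Z})$) correctly; the remaining steps are elementary bookkeeping with the semidirect-product structure.
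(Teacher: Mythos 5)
Your proof is correct and follows essentially the same route as the paper: both arguments reduce the problem to identifying $\operatorname{Rad}(SL_n(\mathbb{Z}))$ as the center (via Margulis's normal subgroup theorem for $n\geq 3$) and then lift this back through the semidirect-product structure, your quotient-map bookkeeping $R=\pi^{-1}(\pi(R))$ being just a cleaner packaging of the paper's decomposition $H=\mathbb{Z}^n\rtimes K$. The only real difference is that for $n=2$ you give a self-contained argument via $PSL_2(\mathbb{Z})\cong\mathbb{Z}/2\ast\mathbb{Z}/3$, where the paper instead cites the proof of Proposition 2.10 of Jiang--Skalski.
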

\begin{proof}
Write $H=\text{Rad}(G_{n})$. Set $H\mathbb{Z}^n = \{hz\ |\ h\in H, z\in \mathbb{Z}^n \}$. Becasue both $H$ and $\mathbb{Z}^n$ are normal in $G_{n}$, their product $H\mathbb{Z}^n$ is also a normal subgroup in $G_{n}$.

Consider the short exact sequence:
\begin{equation*}
    1\longrightarrow \mathbb{Z}^n \longrightarrow H\mathbb{Z}^n \longrightarrow \frac{H\mathbb{Z}^n}{\mathbb{Z}^n} \cong \frac{H}{H\cap{\mathbb{Z}^n}} \longrightarrow 1,
\end{equation*}
Here $\mathbb{Z}^n$ is amenable. From the second isomorphism theorem, we obtain $\frac{H\mathbb{Z}^n}{\mathbb{Z}^n} \cong \frac{H}{H\cap{\mathbb{Z}^n}}$. The quotient group $\frac{H}{H\cap{\mathbb{Z}^n}}$ is amenable because $H$ is amenable. Since amenability is preserved under group extensions, it follows that $H\mathbb{Z}^n$ is also amenable. Then we obtain $H\mathbb{Z}^n \le H$ 
from maximality of the amenable radical. Together with $\mathbb{Z}^n\le H\mathbb{Z}^n$, we have the chain $\mathbb{Z}^n \le H\mathbb{Z}^n \le H$. It follows that $H= \mathbb{Z}^n \rtimes K $, where $K=\{g\in SL_{n}(\mathbb{Z})\ |\ (0,g)\in H\}$. Take any $g\in SL_{n}(\mathbb{Z}), k\in K$, we compute $(0,g)(0,k)(0,g)^{-1} = (0, gkg^{-1})$. Since $H\lhd G_n$, we have $(0, gkg^{-1})\in H$, and hence $ gkg^{-1} \in K$. Thus $K\lhd SL_{n}(\mathbb{Z})$. Moreover, $K \cong \frac{H}{\mathbb{Z}^n}$ and $H$ are amenable, so is $K$. 

It is well-known that the amenable radical of $SL_{n}(\mathbb{Z})$ is $\{\pm I_{n}\}$ for even $n$ and trivial for odd $n$. Indeed, for $n\geq 3$, we may apply Margulis's normal subgroup theorem (\cite[Chapter IV]{Margulis_book}) to deduce that the amenable radical is a finite normal subgroup in $SL_n(\mathbb{Z})$ and hence contained in the center of $SL_n(\mathbb{Z})$ (\cite[Section 17.1]{Morris_book}), therefore, it equals the center. For $n=2$, this is explained in the proof of \cite[Proposition 2.10]{jiangskalski}.

 Since $K$ is a normal amenable subgroup of $SL_{n}(\mathbb{Z})$, we have $K\subseteq \{\pm I_{n}\}$. Consequently, $H = \mathbb{Z}^n \rtimes K \subseteq \mathbb{Z}^n \rtimes \{\pm I_{n}\}$. It's straightforward to check $\mathbb{Z}^n \rtimes \{\pm I_{n}\}$ is normal in $G_{n}$ and amenable. By the maximality of $H$, we obtain $\mathbb{Z}^n \rtimes \{\pm I_{n}\} =H$. 
\end{proof}

Recall that in \cite{jiangskalski}, the concept of the Haagerup radical of a group, i.e. the largest normal Haagerup subgroup, was studied. Although it is still unclear whether every countable discrete group admits the Haagerup radical, it was shown that for $G_2=\mathbb{Z}^2\rtimes SL_2(\mathbb{Z})$, its Haagerup radical exists and coincides with its amenable radical in \cite[Proposition 2.10]{jiangskalski}, which was extended in \cite[Proposition 4.1]{valette} for other semi-direct product ambient groups.
We extend the above result on the Haagerup radical of $G_2$ to all $n\geq 2$, which will be needed for the proof of Corollary \ref{cor}.

\begin{lem}\label{lem: Haagerup radical in G_n}
Let $n\geq 2$ and $G_n=\mathbb{Z}^n\rtimes SL_n(\mathbb{Z})$. Then $G_n$ admits the Haagerup radical, which is $\mathbb{Z}^n$ for odd $n$ and $\mathbb{Z}^n\rtimes \{\pm I_n\}$ for even $n$, where $I_n$ denotes the $n\times n$ identity matrix.
\end{lem}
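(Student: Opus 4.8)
The plan is to identify the Haagerup radical with the amenable radical $R:=\mathrm{Rad}(G_n)$ already computed in Lemma~\ref{3.3}. Since amenable groups have the Haagerup property, $R$ is itself a normal subgroup of $G_n$ with the Haagerup property, so it suffices to prove that \emph{every} normal subgroup $N\lhd G_n$ with the Haagerup property satisfies $N\subseteq R$. Once this is shown, $R$ is the unique largest normal Haagerup subgroup, which simultaneously establishes the existence of the Haagerup radical and pins down its value. Thus the whole argument reduces to a containment statement.

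First I would project via $\pi\colon G_n\twoheadrightarrow SL_n(\mathbb{Z})$ (the quotient by $\mathbb{Z}^n$), so that $\pi(N)\lhd SL_n(\mathbb{Z})$, and apply a dichotomy for normal subgroups. For $n\ge 3$, Margulis's normal subgroup theorem (\cite[Chapter IV]{Margulis_book}) forces $\pi(N)$ to be finite (hence central, contained in $\{\pm I_n\}$) or of finite index; for $n=2$ the same conclusion holds because any infinite normal subgroup of $SL_2(\mathbb{Z})$ is non-amenable (its amenable radical being $\{\pm I_2\}$), while a finite normal subgroup is central. In the finite/central case one gets $N\subseteq \pi^{-1}(\{\pm I_n\})=\mathbb{Z}^n\rtimes\{\pm I_n\}$, which is exactly $R$ for even $n$ and is $\mathbb{Z}^n=R$ for odd $n$ (as $-I_n\notin SL_n(\mathbb{Z})$ then), settling that case.

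It remains to exclude the infinite-image case, and this is where the main obstacle lies. A direct commutator computation—for $(v,\gamma)\in N$ and $w\in\mathbb{Z}^n$ one finds $[(w,e),(v,\gamma)]=((I-\gamma)w,e)\in N\cap\mathbb{Z}^n$—shows $(I-\gamma)\mathbb{Z}^n\subseteq N\cap\mathbb{Z}^n$ for every $\gamma\in\pi(N)$. Since an infinite $\pi(N)$ contains some $\gamma\ne I_n$, the intersection $N\cap\mathbb{Z}^n$ is a nonzero $SL_n(\mathbb{Z})$-invariant subgroup of $\mathbb{Z}^n$, hence equals $d\mathbb{Z}^n$ for some $d\ge 1$. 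Let $K\le N$ be generated by $d\mathbb{Z}^n$ together with one $N$-lift of each element of $\pi(N)$; then $K$ is an extension of $\pi(N)$ by $d\mathbb{Z}^n$ and has finite index $d^n$ in $M:=\mathbb{Z}^n\rtimes\pi(N)$. The crux is that $(M,\mathbb{Z}^n)$ has relative property (T): for $n\ge 3$ this holds because $M$ has finite index in $G_n$, which has property (T), so $M$ has property (T); for $n=2$ it follows from Burger's theorem \cite{burger} since $\pi(N)$ is non-amenable, hence Zariski dense in $SL_2$. Relative property (T) passes to the finite-index subgroup $K$, so $(K,d\mathbb{Z}^n)$ has relative property (T); but $K\le N$ inherits the Haagerup property, and a Haagerup group cannot contain an infinite relatively-(T) subgroup (a proper conditionally negative definite function on $K$ would be bounded on $d\mathbb{Z}^n$, forcing it to be finite). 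This contradiction rules out the infinite-image case, completes the containment $N\subseteq R$, and in particular recovers \cite[Proposition 2.10]{jiangskalski} when $n=2$. The delicate points to verify carefully are the uniform provision of relative property (T) for the auxiliary pair $(M,\mathbb{Z}^n)$ across the two regimes and the standard incompatibility between the Haagerup property and an infinite relatively-(T) subgroup.
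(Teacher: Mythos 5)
Your proof is correct, but it takes a genuinely different route from the paper's. The paper disposes of $n=2$ by citing \cite[Proposition 2.10]{jiangskalski}, and for $n\geq 3$ it never works inside the normal Haagerup subgroup $H$ at all: it enlarges $H$ to the product $HU_n$ with the candidate radical $U_n$, observes that $HU_n$ is again Haagerup (it is Haagerup-by-amenable, using stability of the Haagerup property under extensions with amenable quotient), notes that $HU_n\supseteq\mathbb{Z}^n$ forces $HU_n=\mathbb{Z}^n\rtimes K_n$ with $K_n\lhd SL_n(\mathbb{Z})$ Haagerup --- crucially, $K_n$ sits as a \emph{subgroup} of $HU_n$, which sidesteps the failure of the Haagerup property under quotients --- and then Margulis's normal subgroup theorem together with the incompatibility of property (T) and Haagerup for infinite groups forces $K_n$ to be central. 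You instead stay inside $N$: the commutator identity $[(w,e),(v,\gamma)]=((I-\gamma)w,e)$ produces $N\cap\mathbb{Z}^n=d\mathbb{Z}^n\neq 0$, you build the finite-index subgroup $K\leq N$ of $M=\mathbb{Z}^n\rtimes\pi(N)$, and you reach a contradiction via relative property (T). What your route buys is a uniform, self-contained treatment of all $n\geq 2$ (re-proving rather than citing the $n=2$ case, via Burger); what it costs is two extra standard inputs that you only assert: (i) finite-index heredity of relative property (T), i.e.\ that $(M,\mathbb{Z}^n)$ relative (T) implies $(K,K\cap\mathbb{Z}^n)$ relative (T) --- this is true, and in your setting it has a clean proof by inducing representations from $K$ to $M$, precisely because $d\mathbb{Z}^n$ is normal in $M$ --- and (ii) the fact that a Haagerup group has no infinite subgroup over which it has relative (T), via boundedness of conditionally negative definite functions (the Delorme--Jolissaint direction for pairs). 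Both deserve a proof or a citation in a final write-up. One wording caveat: for $n=2$ your phrase ``the same conclusion holds'' is inaccurate as stated, since infinite normal subgroups of the virtually free group $SL_2(\mathbb{Z})$ generally have infinite index; but what you actually use downstream (infinite normal $\Rightarrow$ non-amenable $\Rightarrow$ Zariski dense, feeding Burger's relative (T) theorem) is correct and suffices, so this is a presentational slip rather than a gap.
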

\begin{proof}
    For $n=2$, this was proved in \cite[Proposition 2.10]{jiangskalski}. We may assume that $n\geq 3$. Following the proof of \cite[Proposition 2.10]{jiangskalski}, let $H$ be any normal subgroup of $G_n$ with the Haagerup property and $U_n=\mathbb{Z}^n\rtimes \{\pm I_n\}$ for even $n$ and $U_n=\mathbb{Z}^n$ for odd $n$. It suffices to show that $H\subseteq U_n$. Clearly, $U_n$ is amenable and normal inside $G_n$. Thus $HU_n$ is a normal subgroup in $G_n$ with the Haagerup property. As $U_n\subseteq HU_n$, we deduce that $HU_n=\mathbb{Z}^n\rtimes K_n$ for some normal subgroup $K_n$ in $SL_n(\mathbb{Z})$ with $\{\pm I_n\}\subseteq K_n$ for even $n$. Note that $K_n$ also has Haggerup property. By Margulis's normal subgroup theorem, this implies that $K_n\subseteq C(SL_n(\mathbb{Z}))=\{\pm I_n\}$ for even $n$ or trivial for odd $n$. In other words, $HU_n\subseteq U_n$. Thus $H\subseteq HU_n=U_n$.
\end{proof}

The following should be well-known. Since we could not find an appropriate reference in the literature, we decide to include a proof. 
\begin{lem}\label{lem: invariant subgroups in Zn} Let $n\geq 2$. Let $H$ be any $SL_{n}(\mathbb{Z})$-invariant subgroup of $\mathbb{Z}^n$, then $H = d\mathbb{Z}^n$ for some $d\in \mathbb{Z}$. \end{lem}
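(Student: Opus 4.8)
The plan is to reduce everything to one classical fact: for $n \geq 2$, the group $SL_n(\mathbb{Z})$ acts transitively on the set of \emph{primitive} vectors of $\mathbb{Z}^n$, where a nonzero $v = (a_1,\dots,a_n)^t$ is called primitive if $\gcd(a_1,\dots,a_n) = 1$. Recall that a primitive vector is precisely one that can be completed to a $\mathbb{Z}$-basis of $\mathbb{Z}^n$; completing such a $v$ to a basis $(v, v_2, \dots, v_n)$ produces a matrix in $GL_n(\mathbb{Z})$ of determinant $\pm 1$, and after replacing $v_n$ by $-v_n$ if necessary we obtain an element $g \in SL_n(\mathbb{Z})$ with $g e_1 = v$, where $e_1 = (1,0,\dots,0)^t$. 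Thus every primitive vector lies in the $SL_n(\mathbb{Z})$-orbit of $e_1$.

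With this in hand, I would first produce the integer $d$. If $H = \{0\}$ I take $d = 0$, so assume $H \neq \{0\}$ and fix a nonzero $v \in H$. Writing $v = c w$ with $c$ equal to the gcd of the coordinates of $v$ and $w$ primitive, transitivity gives $g \in SL_n(\mathbb{Z})$ with $g w = e_1$, and then $c e_1 = g v \in H$ by $SL_n(\mathbb{Z})$-invariance of $H$. Hence the set $\{m \in \mathbb{Z} : m e_1 \in H\}$ is a nontrivial subgroup of $\mathbb{Z}$; I let $d \geq 1$ be its positive generator, so that $\{m : m e_1 \in H\} = d\mathbb{Z}$ and in particular $d e_1 \in H$.

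Next I would check the two inclusions. For $d\mathbb{Z}^n \subseteq H$: invariance gives $g(d e_1) = d(g e_1) \in H$ for every $g \in SL_n(\mathbb{Z})$, and since the vectors $g e_1$ exhaust all primitive vectors, in particular $d e_i \in H$ for each standard basis vector $e_i$; as $H$ is a subgroup this forces $d\mathbb{Z}^n \subseteq H$. For $H \subseteq d\mathbb{Z}^n$: given a nonzero $v \in H$, write again $v = c w$ with $w$ primitive and choose $g$ with $g w = e_1$, so that $c e_1 = g v \in H$; by definition of $d$ this means $d \mid c$, and therefore $v = c w = d \cdot (c/d) w \in d\mathbb{Z}^n$. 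Combining the two inclusions yields $H = d\mathbb{Z}^n$.

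The only genuine input is the transitivity of $SL_n(\mathbb{Z})$ on primitive vectors, equivalently the statement that a primitive vector extends to a basis of $\mathbb{Z}^n$; this is the main point, though it is entirely classical. Everything else is routine bookkeeping with the gcd decomposition $v = c w$ and with the cyclic subgroup $\{m : m e_1 \in H\}$ of $\mathbb{Z}$, so I do not expect any substantive obstacle beyond invoking that one fact.
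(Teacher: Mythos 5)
Your proof is correct, but it takes a genuinely different route from the paper's own argument; in fact it is precisely the alternative that the authors themselves sketch in the remark immediately following the lemma, namely that every primitive vector of $\mathbb{Z}^n$ occurs as the first column of some element of $SL_n(\mathbb{Z})$ when $n\geq 2$. The paper's own proof is self-contained and computational: it defines $d$ as the minimal gcd of coordinates over elements of $H$, then uses Bezout's identity together with explicitly written elementary matrices and row swaps to place $(d,0,\dots,0)^t$ in $H$ (giving $d\mathbb{Z}^n\subseteq H$), and derives the reverse inclusion from the minimality of $d$. You instead outsource all matrix manipulation to the single classical fact that $SL_n(\mathbb{Z})$ acts transitively on primitive vectors, after which the bookkeeping becomes cleaner: the subgroup $\{m\in\mathbb{Z}: m e_1\in H\}=d\mathbb{Z}$ replaces the minimality argument, and both inclusions follow in a few lines from the decomposition $v=cw$ with $w$ primitive. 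What your version buys is conceptual transparency and a clean treatment of the edge case $H=\{0\}$ (which the paper's definition of $d$ as a minimum implicitly glosses over); it also sidesteps a small wrinkle in the paper's proof, where the row-swap matrix has determinant $-1$ and strictly speaking one needs a signed swap (or the subgroup's closure under negation, as the authors invoke) to stay inside $SL_n(\mathbb{Z})$ for every parity of $n$. What the paper's version buys is self-containedness: the transitivity fact you cite, while entirely classical, is itself proved by essentially the same Bezout-plus-elementary-matrix manipulations, so the paper's argument avoids appealing to any external result.
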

\begin{proof}

Let $d:=min\{\text{gcd}(\left|h_1\right|,\ldots,\left|h_n\right|)~|~ (h_1,\ldots,h_n)^t \in H\}$. Since $\mathbb{N}\setminus\{0\}$ is bounded below, such a $d$ exsits. Hence there exsits $(a_1,\dots,a_n)^t\in H$ such that $\text{gcd}(a_1,\dots,a_n)=d$. By Bezout’s Identity, we get $n$ integers $x_1,\dots,x_n\in \mathbb{Z}$ such that $x_1a_1+\dots+x_na_n = d$.

We consider the first coordinate. Since $H$ is $SL_{n}(\mathbb{Z})$-invariant, we have $g_2(a_1,\dots,a_n)^t = (a_1+x_2a_2,a_2,\dots,a_n)^t \in H$, where $g_2= \left(\begin{smallmatrix}
     1 & x_2 \\
     0 & 1 \\
     & & I_{n-2}
   \end{smallmatrix}\right)\in SL_{n}(\mathbb{Z})$. Then \begin{equation*}
      \left(\begin{smallmatrix}
           a_1+x_2a_2\\
           a_2\\
           \vdots\\
           a_n
       \end{smallmatrix}\right)-\left(\begin{smallmatrix}
            a_1\\
           a_2\\
           \vdots\\
           a_n
\end{smallmatrix}\right)=\left(\begin{smallmatrix}
            x_2a_2\\
           0\\
           \vdots\\
           0
       \end{smallmatrix}\right)\in H
   \end{equation*}
Similarly, for any $i\ne 1$, we have $(a_ix_i,0,\dots,0)^t \in H$. If we take $g_1= \left(\begin{smallmatrix}
     1 & 0 \\
     x_1 & 1 \\
     & & I_{n-2}
     \end{smallmatrix}\right)$, then $g_1(a_1,\dots,a_n)^t-(a_1,\dots,a_n)^t=(0,x_1a_1,0,\dots,0)^t \in H$. Note that $H$ is a subgroup, its $SL_{n}(\mathbb{Z})$-invariance also implies $-SL_{n}(\mathbb{Z})$-invariance. Hence, swapping the first two rows yields $(x_1a_1,0,\dots,0)^t\in H$, and subsequently we have $(d,0,\dots,0)^t \in H$. Swapping the two rows yields $(0,\dots,0,d,0,\dots,0)^t\in H$, where $d$ is at the $i$-th position, which shows that $d\mathbb{Z}^n \subseteq H$.

Next, we consider the reverse inclusion. We prove that for any $x\not\in d\mathbb{Z}^n$, we have $x\not\in H$.

Suppose $x=(x_1,\dots,x_n)^t\not\in d\mathbb{Z}^n$, that is $\exists~ i, d\nmid x_i$, which shows that $x_i\equiv x_i'~\text{mod}~d$ for some $x_i'$ with $0<x_i'<d$. If $x\in H$, we have \begin{equation*}
    \left(\begin{smallmatrix}
        x_1\\
        \vdots\\
        x_i'\\
        \vdots\\
        x_n 
\end{smallmatrix}\right)=\left(\begin{smallmatrix}
         x_1\\
        \vdots\\
        x_i\\
        \vdots\\
        x_n 
    \end{smallmatrix}\right)-\frac{x_i-x_i'}{d}\left(\begin{smallmatrix}
         0\\
        \vdots\\
        d\\
        \vdots\\
        0
    \end{smallmatrix}\right)\in H.
\end{equation*}
Then we obtain $\text{gcd}(x_1,\dots,x_i',\dots,x_n)<d$, which contradicts the minimality in the definition of $d$.
\end{proof}

\begin{remark}
    We remark that the above lemma can also be proved by noticing the fact that for any $v=(a_1,\ldots, a_n)^t\in \mathbb{Z}^n$ with $1=\text{gcd}(\left|a_1\right|,\ldots, \left|a_n\right|)$, then there exsits some $g\in SL_n(\mathbb{Z})$ such that the first column of $g$ equals $v$.
\end{remark}

Using the above remark, it is not hard to see that the following lemma holds true. We decide to include a slightly different proof.

\begin{lem} \label{3.6}
 Let $n\geq 2$ and $G_n=\mathbb{Z}^n\rtimes SL_n(\mathbb{Z})$. Let $P\subseteq L(G_n)$ be any $G_n$-invariant von Neumann subalgebra. Denote by $E: L(G_n)\rightarrow P$ the trace $\tau$-preserving conditional expectation onto $P$. Write $s=-I_n$, where $I_n$ is the $n\times n$ identity matrix in $SL_n(\mathbb{Z})$. Assume that $E(s)=0$. Then $E(vs)=0$ for all $v\in \mathbb{Z}^n$ if and only if $ E(e_1s)=0$, where $e_1=(1, 0,\dots,0)^t\in\mathbb{Z}^n$.   
\end{lem}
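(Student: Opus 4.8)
The plan is to package the whole statement into a single function $f\colon \mathbb{Z}^n\to P$, defined by $f(v):=E(u_{vs})$, where I identify each group element with its canonical unitary and write $vs=(v,-I_n)\in G_n$. The forward implication is immediate, so the real content is the reverse: assuming $f(0)=E(u_s)=0$ and $f(e_1)=0$, I must show $f\equiv 0$. The argument rests on two equivariance relations coming from the two "directions" of conjugation in $G_n$, which I read off from the semidirect-product law $(v,g)(w,h)=(v+gw,gh)$.

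First I would record the two conjugation formulas. Writing $u_g=u_{(0,g)}$ for $g\in SL_n(\mathbb{Z})$ and $u_w=u_{(w,I_n)}$ for $w\in\mathbb{Z}^n$, a direct computation in $G_n$ gives
\[
(0,g)(v,-I_n)(0,g)^{-1}=(gv,-I_n),\qquad (w,I_n)(v,-I_n)(w,I_n)^{-1}=(v+2w,-I_n).
\]
The decisive feature is that $s=-I_n$ acts as $-\mathrm{id}$ on $\mathbb{Z}^n$, so that conjugation by $u_w$ shifts the $\mathbb{Z}^n$-label of $u_{vs}$ by $2w$ rather than by $w$. Since $P$ is $G_n$-invariant, conjugation by any $u_\gamma$ is a trace-preserving automorphism preserving $P$, hence commutes with the unique trace-preserving conditional expectation $E$. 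Applying $E$ to the two identities yields
\[
f(gv)=u_g\,f(v)\,u_g^{*}\ \ (g\in SL_n(\mathbb{Z})),\qquad f(v+2w)=u_w\,f(v)\,u_w^{*}\ \ (w\in\mathbb{Z}^n).
\]
In particular the zero set $Z:=\{v\in\mathbb{Z}^n: f(v)=0\}$ is invariant under $SL_n(\mathbb{Z})$ and under translation by $2\mathbb{Z}^n$, and it contains $0$ and $e_1$ by hypothesis.

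The remaining step propagates vanishing from $0$ and $e_1$ to all of $\mathbb{Z}^n$ by reducing modulo $2$. Given an arbitrary $v\in\mathbb{Z}^n$, I split according to its class $\bar v\in\mathbb{F}_2^n=\mathbb{Z}^n/2\mathbb{Z}^n$. If $\bar v=0$, write $v=2w$ and use the second relation with base point $0$ to get $f(v)=u_w f(0)u_w^{*}=0$. If $\bar v\neq 0$, lift it to its representative $a\in\{0,1\}^n\subseteq\mathbb{Z}^n$; since $a$ has a coordinate equal to $1$ it is primitive, so by the Remark following Lemma \ref{lem: invariant subgroups in Zn} there is $g\in SL_n(\mathbb{Z})$ with $a=ge_1$, whence $f(a)=u_g f(e_1)u_g^{*}=0$. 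Writing $v=a+2w$ and applying the second relation again gives $f(v)=u_w f(a)u_w^{*}=0$. Hence $Z=\mathbb{Z}^n$, which is the claim.

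I expect the only genuinely delicate point to be the bookkeeping in the conjugation formulas, in particular getting the factor $2$ in the shift $v\mapsto v+2w$ correct: this is precisely what makes reduction modulo $2$ the natural framework and lets the two hypotheses $f(0)=0$, $f(e_1)=0$ cover, respectively, the trivial class and all nonzero classes of $\mathbb{F}_2^n$. Everything after that is transitivity of $SL_n(\mathbb{Z})$ on primitive vectors (already available in the excerpt) together with the elementary observation that every nonzero class in $\mathbb{F}_2^n$ lifts to a primitive vector in $\{0,1\}^n$.
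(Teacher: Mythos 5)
Your proof is correct. The two equivariance identities you derive---conjugation by $u_g$ turning $E(u_{vs})$ into $E(u_{(gv)s})$, and conjugation by $u_w$ shifting the $\mathbb{Z}^n$-label by $2w$---are exactly the computations underlying the paper's argument as well (the paper's identities $vE(s)v^{-1}=E((2v)s)$ and $vE(e_1s)v^{-1}=E((2v+e_1)s)$ are your shift relation applied at the base points $0$ and $e_1$). Where you genuinely diverge is in how vanishing is propagated to all of $\mathbb{Z}^n$: you reduce modulo $2$, note that every nonzero class of $\mathbb{F}_2^n$ has a primitive representative in $\{0,1\}^n$, and invoke transitivity of $SL_n(\mathbb{Z})$ on primitive vectors (the Remark following Lemma \ref{lem: invariant subgroups in Zn}), whereas the paper runs an induction on the number of nonzero coordinates of $v$, using signed permutation matrices for the base case and an explicit $2\times 2$ Bezout block to merge two nonzero coordinates at each inductive step. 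Your route is shorter and conceptually cleaner---the zero set of $f$ is a union of mod-$2$ classes, and the hypotheses $f(0)=f(e_1)=0$ cover the trivial class and, via the $SL_n(\mathbb{Z})$-action, all nonzero classes respectively---but it leans on the orbit-transitivity fact, which the paper states in that Remark without proof; the paper's induction is more self-contained, requiring only matrices it writes down explicitly. It is worth noting that the paper itself says, just before the lemma, that \say{using the above remark, it is not hard to see that the following lemma holds true} and that it chose to \say{include a slightly different proof}: your argument is essentially that omitted remark-based proof, carried out carefully and correctly.
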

\begin{proof}
The \say{only if} direction clearly holds true. We just need to check that the \say{if} direction holds. Assume that $E(e_{1}s) = 0$. We now proceed by mathematical induction on the number of nonzero coordinates of $v\in\mathbb{Z}^n$.

For any $v = (d,0,\dots,0)^t\in \mathbb{Z}^n$, we get that $0=vE(s)v^{-1}=E(vsv^{-1})=E(v\sigma_s(v^{-1})s)=E((2d, 0,\dots, 0)^ts)$; similarly, $0=vE(e_1s)v^{-1}=E((2d+1, 0,\dots, 0)^ts)$. Since $d\in\mathbb{Z}$ is arbitrary, we deduce that $E((x_{1},0,\dots,0)^t s)=0$ for all $x_{1}\in \mathbb{Z}$.

\textbf{Base case $k=1$.} Let $v$ has exactly one nonzero coordinate, i.e. $v = x_{1}e_{i}:=(0,\dots,0,x_{1},0,\dots,0)^t$ with $x_1$ in the $i$-th position, where $0\ne x_1\in \mathbb{Z}$ and $e_i:=(0,\dots,0,1,0,\dots,0)^t $ with 1 in the $i$-th position.
Obviously, we can find a matrix $g_{1}\in SL_{n}(\mathbb{Z})$ such that $g_{1}\cdot (x_1,0,\dots,0)^t = v$(where $g_{1}\cdot (x_1e_1)$ denotes the matrix left multiplication), and thus we have $E(vs) = E(g_{1}(x_1e_1)g_{1}^{-1}s) = g_{1}E(x_{1}e_1s)g^{-1} = 0$. Therefore, the statement holds for every vector with exactly one nonzero coordinate.

Assume for some $k\ge1$ that $E(ws) = 0$ for all $w\in \mathbb{Z}^n\setminus\{0\}$ with $||w||_{0}\le k$, where $||w||_{0}$ denotes the number of nonzero coordinates of $w$.

\textbf{Induction step.} Let $v = (x_1,x_2,\dots,x_n)^t$ satisfy $||v||_{0} = k+1$. Obviously, we can reorder the coordinates of $v$ by a suitable matrix $g_{k+1}\in SL_{n}(\mathbb{Z})$ so that all its nonzero entries appear in the first $k+1$ positions. Consequently, we may assume without loss of generality that $v=(x_1,x_2,\dots,x_{k+1},0,\dots,0)^t,\,x_i\ne0(1\le i \le k+1)$.\\
Consider the first two coordinates $x_1,x_2$ of $v$, set $q=\text{gcd}(|x_1|, |x_2|)$. By Bezout's Identity, we get two integers $a, b\in\mathbb{Z}$ such that $x_1a-x_2b=q$, then set $g:= 
   \left(\begin{smallmatrix}
     x_1/q & b \\
     x_2/q & a \\
     & & I_{n-2}
   \end{smallmatrix}\right)
   \in SL_{n}(\mathbb{Z})$.
So we have following equation:
\begin{equation*}
    g\cdot \left(\begin{smallmatrix}
        q\\
        0\\
        x_3\\
        \vdots\\
        x_{k+1}\\
        \vdots\\
        0
\end{smallmatrix}\right)=\left(\begin{smallmatrix}
     x_1/q & b \\
     x_2/q & a \\
     & & I_{n-2}
   \end{smallmatrix}\right) \left(\begin{smallmatrix}
        q\\
        0\\
        x_3\\
        \vdots\\
        x_{k+1}\\
        \vdots\\
        0
\end{smallmatrix}\right)=\left(\begin{smallmatrix}
        x_1\\
        x_2\\
        x_3\\
        \vdots\\
        x_{k+1}\\
        \vdots\\
        0
    \end{smallmatrix}\right).
\end{equation*}
Put $w:=(q,0,x_3,\dots,x_{k+1},0,\dots,0)^t$. Because the second coordinate of $w$ is 0, its nonzero entries are $q, x_{3},\dots,x_{k+1}$, hence $||w||_{0}\le k$, so we have $E(ws) = 0$. Then we obtain equation $E(vs) = E(gwg^{-1}s) = gE(ws)g^{-1} = 0$. Thus the statement holds for vectors with $k+1$ nonzero coordinates.

By mathematical induction, $E(vs) = 0 $ for every nonzero vector $v\in\mathbb{Z}^n$ with $||v||_{0} = k$ for any $1\le k \le n$. The case $v=0$ is trivial.
Consequently, $E(e_1s) = 0$ implies $E(vs)=0$ for all $v\in\mathbb{Z}^n$.
\end{proof}

\section{Proof of Theorem \ref{thm} and Corollary \ref{cor}}\label{section: proof}

We first present the classification of invariant subfactors via the character approach developed in \cite{dj,jz}, which works uniformly for all $n\geq 2$.
\begin{prop}\label{prop: classify_invariant_subfactors}
    Let $P$ be a $G_n$-invariant subfactor in $L(G_n)$, where $G_n=\mathbb{Z}^n\rtimes SL_n(\mathbb{Z})$ and $n\geq 2$. Then there exists some normal subgroup $N$ in $G$ such that $P=L(N)$.
\end{prop}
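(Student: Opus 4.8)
The plan is to prove that the $\tau$-preserving conditional expectation $E\colon L(G_n)\to P$ is \emph{group-like}, i.e. that $E(u_g)\in\{0,u_g\}$ for every $g\in G_n$. Granting this, $N:=\{g\in G_n: E(u_g)=u_g\}$ is a subgroup (because $P$ is an algebra), it is normal (because $P$ is $G_n$-invariant), and $P=L(N)$. Throughout I would use that $G_n$-invariance of $P$ makes $E$ equivariant, $E(u_g x u_g^{*})=u_gE(x)u_g^{*}$; specializing to $g=(0,s)$ with $s\in SL_n(\mathbb{Z})$ and $x=u_v$, $v\in\mathbb{Z}^n$, this shows that the Fourier coefficients of $E$ over the abelian part are $SL_n(\mathbb{Z})$-equivariant, namely $\langle E(u_{s\cdot v}),u_{s\cdot w}\rangle=\langle E(u_v),u_w\rangle$.

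The heart of the argument is a dichotomy extracted from Lemma \ref{3.2}. To $P$ I would associate the function $\phi(v):=\tau(E(u_v)u_v^{*})$ on $\mathbb{Z}^n$, together with the analogous function $\psi$ built from a complementary $G_n$-invariant algebra, namely the relative commutant $P'\cap L(G_n)$ (which is again $G_n$-invariant). Using the equivariance above one checks that $\phi$ and $\psi$ are $SL_n(\mathbb{Z})$-invariant characters on $\mathbb{Z}^n$, hence Fourier transforms of $SL_n(\mathbb{Z})$-invariant probability measures on $\mathbb{T}^n$. The subfactor hypothesis $\mathcal{Z}(P)=\mathbb{C}$ places $P$ and $P'\cap L(G_n)$ in general position, and I would leverage this to force $\phi(v)\psi(v)=0$ for all $e\ne v\in\mathbb{Z}^n$. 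Lemma \ref{3.2} then gives $\phi\equiv\delta_e$ or $\psi\equiv\delta_e$, which I read as the clean dichotomy: either $E(u_v)=0$ for every $e\ne v\in\mathbb{Z}^n$, or $L(\mathbb{Z}^n)\subseteq P$. Verifying the product relation $\phi\psi=0$ off $e$ from factoriality, and confirming positive-definiteness of $\phi$ (i.e. that it really is the transform of an invariant measure), is the step I expect to be the main obstacle, since this is exactly what unlocks Lemma \ref{3.2}.

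In the case $L(\mathbb{Z}^n)\subseteq P$, I would exploit that $L(\mathbb{Z}^n)=L^{\infty}(\mathbb{T}^n)$ is a regular MASA (Cartan subalgebra) in $L(G_n)$, because the action $SL_n(\mathbb{Z})\curvearrowright\mathbb{T}^n$ is free off a null set (no $s\ne I_n$ fixes a set of positive measure). Consequently every intermediate algebra $L(\mathbb{Z}^n)\subseteq P\subseteq L(G_n)$ is spanned, as an $L(\mathbb{Z}^n)$-bimodule, by the normalizing unitaries it contains, so $P=\overline{\mathrm{span}}\{L(\mathbb{Z}^n)u_{(0,s)}: s\in K\}$ for some subset $K\subseteq SL_n(\mathbb{Z})$; that $P$ is a $G_n$-invariant subalgebra forces $K$ to be a normal subgroup, whence $P=L(\mathbb{Z}^n\rtimes K)$. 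The structure of normal subgroups of $SL_n(\mathbb{Z})$ (Margulis's normal subgroup theorem for $n\ge 3$, the explicit description for $n=2$) then pins down the admissible $K$.

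In the complementary case $E(u_v)=0$ for all $e\ne v\in\mathbb{Z}^n$, I would propagate group-likeness from the abelian slice to every slice $g=(v,s)$. Conjugating $u_{(v,s)}$ by $u_w$, $w\in\mathbb{Z}^n$, moves the first coordinate through the coset $v+\im(I_n-s)$, which is a finite-index sublattice whenever $\det(I_n-s)\ne 0$; an averaging argument over these translates, combined with $E(u_v)=0$ for $v\ne e$, shows that $E(u_{(v,s)})$ is a scalar multiple of $u_{(v,s)}$, and then that the scalar is $0$ or $1$. The delicate slice is $s=-I_n$, the element responsible for the nontrivial center of $SL_n(\mathbb{Z})$ when $n$ is even, and it is handled precisely by Lemma \ref{3.6}, which propagates $E(u_{(v,-I_n)})=0$ from the single input $E(u_{(e_1,-I_n)})=0$. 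Assembling all slices yields $E(u_g)\in\{0,u_g\}$ for every $g$, giving the group-like conclusion and hence $P=L(N)$; this last propagation, especially around the even-$n$ center element $-I_n$, is the secondary point requiring care.
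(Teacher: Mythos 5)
Your proposal follows the paper's strategy only in its first half (equivariance of $E$, two characters, the dichotomy from Lemma \ref{3.2}), and it contains a genuine gap exactly at the branch you treat as the "good" one. The paper does \emph{not} build $\psi$ from the global relative commutant: it first invokes the proof of \cite[Theorem 3.1]{cds} to produce a normal subgroup $N\lhd G_n$ with the tensor splitting $L(N)=P\bar\otimes(P'\cap L(N))$, and defines $\psi$ via $E_{P'\cap L(N)}$. That splitting is precisely what turns the branch $\psi\equiv\delta_e$ into an identification of $P$: vanishing of $E_{P'\cap L(N)}$ off $e$ on $N$ gives $P'\cap L(N)=\mathbb{C}$, hence $P=L(N)$. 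In your version, with $Q=P'\cap L(G_n)$, the branch $\psi|_{\mathbb{Z}^n}\equiv\delta_e$ is a statement about $Q$, not about $P$, and your reading of it as "$L(\mathbb{Z}^n)\subseteq P$" is false. Concretely, let $\Gamma(4)=\ker\bigl(SL_n(\mathbb{Z})\to SL_n(\mathbb{Z}/4\mathbb{Z})\bigr)$ and $P=L\bigl(2\mathbb{Z}^n\rtimes\Gamma(4)\bigr)$. The group $2\mathbb{Z}^n\rtimes\Gamma(4)$ is normal in $G_n$ (as $\Gamma(4)\lhd SL_n(\mathbb{Z})$ and $(I_n-t\gamma t^{-1})\mathbb{Z}^n\subseteq 4\mathbb{Z}^n\subseteq 2\mathbb{Z}^n$ for $\gamma\in\Gamma(4)$) and icc, so $P$ is a $G_n$-invariant subfactor; moreover no nontrivial element of $G_n$ has a finite conjugacy class under $2\mathbb{Z}^n\rtimes\Gamma(4)$ (conjugating $(v,s)$ with $s\neq I_n$ by $2\mathbb{Z}^n$ already produces infinitely many elements, and $\Gamma(4)$-orbits of nonzero vectors are infinite), so $P'\cap L(G_n)=\mathbb{C}$ and your $\psi$ is identically $\delta_e$; yet $u_{e_1}\notin P$, i.e. $L(\mathbb{Z}^n)\not\subseteq P$. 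Hence your Branch B, and the entire Cartan-subalgebra/normalizer argument erected on $L(\mathbb{Z}^n)\subseteq P$, collapses. (That argument is also shaky on its own terms: intermediate von Neumann algebras of a Cartan inclusion correspond to intermediate equivalence relations, not only to subgroup crossed products, so invariance would have to be exploited much more carefully than "the span of the unitaries it contains".)

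Two further points. First, the step you flagged as the main obstacle, $\phi(g)\psi(g)=0$ for $g\neq e$, is in fact provable even with $Q=P'\cap L(G_n)$: uniqueness of the trace on the factor $P$ gives $\tau(xy)=\tau(x)\tau(y)$ for $x\in P$, $y\in Q$; and if $v:=E_P(u_g)\neq0$, then $P$-bimodularity gives $vx=(u_gxu_g^{*})v$ for all $x\in P$, so $v^{*}v\in\mathcal{Z}(P)=\mathbb{C}$, yielding a unitary $w\in P$ with $c:=w^{*}u_g\in\mathcal{U}(Q)$ and $E_P(u_g)=\tau(c)w$; then $0=\tau(u_g)=\tau(w)\tau(c)$ with $\tau(c)\neq0$ forces $E_Q(u_g)=\tau(w)c=0$. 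Second, your framework can be repaired without \cite{cds}, but not along the route you chose: in the branch $\psi|_{\mathbb{Z}^n}\equiv\delta_e$, propagate the vanishing to all of $G_n$ by the conjugation trick (conjugates of $(v,s)$, $s\neq I_n$, by $\mathbb{Z}^n$ differ by nonzero elements of $\mathbb{Z}^n$, so \cite[Lemma 2.7]{dj} applies — this replaces your vague "averaging", and it also makes your appeal to Lemma \ref{3.6} for the slice $-I_n$ unnecessary); this yields $P'\cap L(G_n)=\mathbb{C}$, and then the displayed computation gives $w^{*}u_g\in P'\cap L(G_n)=\mathbb{C}$ whenever $E_P(u_g)\neq 0$, so $u_g\in P$: thus $E_P$ is group-like and $P=L(N)$ with $N=\{g:u_g\in P\}$. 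As submitted, however, your proof rests on an implication that the example above refutes.
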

\begin{proof}
By the proof of \cite[Theorem 3.1]{cds}, there is some normal subgroup $N\lhd G_n$ such that $L(N)=P\bar\otimes (P'\cap L(N))$. Let $E_{1}:L(G_n)\twoheadrightarrow P,~ E_{2}:L(G_n)\twoheadrightarrow P'\cap L(N)$ be the two trace $\tau$-preserving conditional expectations. Two characters $\phi$ and $\psi$ on $G_n$ could be introduced by defining $\phi(g):=\tau(E_{1}(g)g^{-1}),\psi(g)=\tau(E_{2}(g)g^{-1}), \forall g\in G_n$. Then  from the calculation used in the proof of \cite[Proposition 3.2]{jz}, we know that $\phi(g)\psi(g)=0$ for all $e\ne g\in N$.

Note that $N\cap \mathbb{\mathbb{Z}}^n$ is an $SL_{n}(\mathbb{Z})$-invariant subgroup of $\mathbb{Z}^n$. We obtain that $N\cap \mathbb{\mathbb{Z}}^n = \{0\}$ or $d_1\mathbb{Z}^n$ for some non-zero $d_1\in \mathbb{Z}$ by Lemma \ref{lem: invariant subgroups in Zn}.

If $N\cap \mathbb{\mathbb{Z}}^n = \{0\}$, then  $N\subseteq C_{G_n}(\mathbb{Z}^n)=\{A\in G_n|Av=vA,\forall v\in \mathbb{Z}^n\}=\mathbb{Z}^n$. Therefore $N$ is abelian, it follows that $P\subseteq L(N)$ is abelian, then $P = \mathcal{Z}(P) = \mathbb{C}$.

If $N\cap \mathbb{\mathbb{Z}}^n = d_1\mathbb{Z}^n$ for some nonzero $d_1\in \mathbb{Z}$, then $\phi|_{d_{1}\mathbb{Z}^n}$ and $\psi|_{d_{1}\mathbb{Z}^n}$ are two $SL_{n}(\mathbb{Z})$-invariant characters on $d_1\mathbb{Z}^n$ with $\phi|_{d_{1}\mathbb{Z}^n}(g)\psi|_{d_{1}\mathbb{Z}^n}(g) = 0,~\forall e\ne g \in d_1\mathbb{Z}^n$. Using Lemma \ref{3.2}, we just need to consider two cases.

\textbf{Subcase 1.} $\phi|_{d_{1}\mathbb{Z}^n}=\delta_e$.

Then we just need to check that for all $e\neq g \in N$, then $\phi(g)=0$. Write  $g =vt= (v,t)$, where $v\in \mathbb{Z}^n$ and $t\in SL_n(\mathbb{Z})$. Without loss of generality, we may assume that $t\ne I_n$, the $n\times n$ identity matrix in $SL_n(\mathbb{Z})$.

To prove $\phi(g)=0$, we will pick infinitely many suitable $v_n\in d_1\mathbb{Z}^n$ to be determined later. Consider $g_{n}:=v_{n}gv_{n}^{-1}=(v_{n}+v-\sigma _{t}(v_{n}),t)\in N$. Here $\sigma_t(v_n)=t\cdot v_n$ is the matrix left multiplication.  Note that $g_{n}g_{m}^{-1}=(v_{n}-v_{m}+\sigma_t(v_{m})-\sigma_t(v_{n}),I_{n})\in d_1\mathbb{Z}^n$. Let us pick $v_n$ such that $g_ng_m^{-1} \ne e$; equivalently, we need $v_n - v_m \ne \sigma_t(v_n - v_m)$ for all $n\ne m$. For example, we may take $v_n:=nw$ for any $w\in d_1\mathbb{Z}^n$ with $\sigma_{t}(w)\ne w$ which is possible since $t\neq I_n$.
Then we get that $\phi(g_ng_m^{-1})=0$ and \cite[Lemma2.7]{dj} yields that $\phi(g) = \phi(g_n) = 0$. This show that $\phi|_{N} \equiv \delta_e$, then we have $E_1(g) = 0,~\forall g\neq e$ and hence $P=\mathbb{C}$.

\textbf{Subcase 2.} $\psi|_{d_{1}\mathbb{Z}^n} = \delta_e$.

This is similar to the above proof and we conclude that $P'\cap L(N) = \mathbb{C}$ and thus $L(N)=P$.
\end{proof}

\begin{proof}[Proof of Theorem \ref{thm}]
    The \say{if} direction in Theorem \ref{thm} is easily verified, hence we just need to prove the \say{only if} direction, which we record as  Proposition \ref{prop: version of prop 3.1} and Proposition \ref{prop: version of prop 3.2} below
due to the difference while classifying invariant subalgebras with non-trivial center depending on the parity of $n$.
\end{proof} 
We split this section into two subsections.

\subsection{The proof for odd $n$}

We first prepare one lemma needed for the proof of Proposition \ref{prop: version of prop 3.1}.

\begin{lem}\label{3.4}
Let $G_{n}=\mathbb{Z}^n \rtimes SL_n(\mathbb{Z}), n\ge 2$ and n is odd. Let $d\ge1$ and $A_{d}:=\{\sum_{v\in d\mathbb{Z}^{n}}c_{v}v:c_{v}=c_{-v}\in\mathbb{C},\forall v\in d\mathbb{Z}^{n}\}\subseteq L(d\mathbb{Z}^n)$. Then $A_d' \cap L(G_{n})=L(\mathbb{Z}^{n})$.
\end{lem}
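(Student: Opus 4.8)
The plan is to prove the two inclusions separately. The inclusion $L(\mathbb{Z}^n)\subseteq A_d'\cap L(G_n)$ is immediate: $\mathbb{Z}^n$ is abelian, so $L(\mathbb{Z}^n)$ is commutative and contains $A_d\subseteq L(d\mathbb{Z}^n)$, whence every element of $L(\mathbb{Z}^n)$ commutes with $A_d$. The content is therefore the reverse inclusion $A_d'\cap L(G_n)\subseteq L(\mathbb{Z}^n)$. To establish it, I fix $x\in A_d'\cap L(G_n)$ and expand it in its Fourier series over $G_n$, grouping by the $SL_n(\mathbb{Z})$-coordinate: writing $g=(w,s)$ with $w\in\mathbb{Z}^n$, $s\in SL_n(\mathbb{Z})$, set $f_s(w):=\tau(xu_{(w,s)}^*)$, so that $f_s\in\ell^2(\mathbb{Z}^n)$ for each $s$ and $x=\sum_s\sum_w f_s(w)u_{(w,s)}$. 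Since $A_d$ is generated as a von Neumann algebra by the symmetric elements $u_v+u_{-v}$ ($v\in d\mathbb{Z}^n$), the condition $x\in A_d'$ is equivalent to $[x,u_v+u_{-v}]=0$ for all $v\in d\mathbb{Z}^n$.

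Using $u_{(w,s)}u_v=u_{(w+\sigma_s(v),s)}$ and $u_vu_{(w,s)}=u_{(v+w,s)}$ and comparing the Fourier coefficients of $u_{(u,s)}$ on both sides of this commutator, I obtain, for every $s$, every $u\in\mathbb{Z}^n$, and every $v\in d\mathbb{Z}^n$, the functional equation
\[
f_s(u+\sigma_s(v))+f_s(u-\sigma_s(v))=f_s(u+v)+f_s(u-v).
\]
I then Fourier-transform each $f_s$ to a function $\hat f_s\in L^2(\widehat{\mathbb{Z}^n})=L^2(\mathbb{T}^n)$; since a shift by $a\in\mathbb{Z}^n$ becomes multiplication by the character $\langle\,\cdot\,,a\rangle$, the equation becomes
\[
\big(\langle\chi,\sigma_s(v)\rangle+\langle\chi,\sigma_s(v)\rangle^{-1}-\langle\chi,v\rangle-\langle\chi,v\rangle^{-1}\big)\,\hat f_s(\chi)=0
\]
for every $v\in d\mathbb{Z}^n$ and a.e.\ $\chi\in\mathbb{T}^n$. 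Intersecting over the countably many $v\in d\mathbb{Z}^n$, this says that $\hat f_s$ is supported, up to Haar-null sets, on the set $E_s$ of those $\chi$ for which $\langle\chi,\sigma_s(v)\rangle+\langle\chi,\sigma_s(v)\rangle^{-1}=\langle\chi,v\rangle+\langle\chi,v\rangle^{-1}$ holds for all $v\in d\mathbb{Z}^n$.

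The heart of the argument, and the step I expect to be the main obstacle, is to show that $E_s$ is Haar-null whenever $s\neq I_n$. Using that $z+z^{-1}=w+w^{-1}$ in $\mathbb{T}$ forces $z=w^{\pm1}$, membership $\chi\in E_s$ means that for each $v\in d\mathbb{Z}^n$ either $\langle\chi,\sigma_s(v)-v\rangle=1$ or $\langle\chi,\sigma_s(v)+v\rangle=1$. For fixed $\chi$ the two alternatives carve out subgroups of $d\mathbb{Z}^n$, and since a group is never the union of two proper subgroups, one alternative must hold for all $v$ simultaneously; hence $E_s=E_s^{+}\cup E_s^{-}$, where $E_s^{\pm}=\{\chi:\langle\chi,\sigma_s(v)\mp v\rangle=1,\ \forall v\in d\mathbb{Z}^n\}$. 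Converting $\langle\chi,\sigma_s(v)\mp v\rangle$ to the dual action shows that $E_s^{\pm}$ is exactly the preimage of the finite $d$-torsion subgroup $\mathbb{T}^n[d]=(d\mathbb{Z}^n)^{\perp}$ under the continuous endomorphism of $\mathbb{T}^n$ determined by the integer matrix $s^{t}\mp I_n$. Such a preimage is a finite union of cosets of the endomorphism's kernel, and a proper closed subgroup of $\mathbb{T}^n$ is always Haar-null; thus $E_s^{\pm}$ is null unless the endomorphism is identically zero. Now $s^{t}-I_n=0$ forces $s=I_n$, while $s^{t}+I_n=0$ forces $s=-I_n$, which is impossible in $SL_n(\mathbb{Z})$ precisely because $n$ is odd (so $\det(-I_n)=-1$). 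Therefore $E_s$ is Haar-null for every $s\neq I_n$, giving $\hat f_s=0$ and hence $f_s=0$ for all $s\neq I_n$. This leaves $x=\sum_w f_{I_n}(w)u_{(w,I_n)}\in L(\mathbb{Z}^n)$, which completes the proof.
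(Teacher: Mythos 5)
Your proof is correct, but it follows a genuinely different route from the paper's. The paper identifies $A_d\cong L^{\infty}(\widehat{d\mathbb{Z}^n}/{\sim})$ and reduces the lemma, via Packer's embedding result for quotient actions, to the \emph{essential freeness} of the quotient action $SL_n(\mathbb{Z})\curvearrowright \widehat{d\mathbb{Z}^n}/{\sim}$; this freeness is then verified (after a WLOG reduction to $d=1$) by a case analysis of the linear systems $(A\mp I_n)\boldsymbol{\theta}\in(2\pi\mathbb{Z})^n$ according to the multiplicity of the eigenvalues $\pm1$ and Jordan-block considerations. You instead work entirely inside $L(G_n)$: commutation with the self-adjoint elements $u_v+u_{-v}\in A_d$ gives a functional equation on the Fourier coefficients $f_s$, which after transforming becomes a support condition on $\hat f_s$ over the set $E_s$. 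Your treatment of $E_s$ is the genuinely novel part relative to the paper: the quantifier swap via the fact that a group is never the union of two proper subgroups (turning \say{for each $v$, either/or} into $E_s=E_s^+\cup E_s^-$), and then the observation that each $E_s^{\pm}$ is a preimage of the finite $d$-torsion subgroup under the endomorphism of $\mathbb{T}^n$ induced by $s^t\mp I_n$, hence a finite union of cosets of a proper closed --- and therefore Haar-null --- subgroup whenever that integer matrix is nonzero. Both arguments ultimately rest on the same arithmetic fact, namely that $-I_n\notin SL_n(\mathbb{Z})$ for odd $n$, and both establish what is morally the same measure-zero statement; but your version is self-contained (no appeal to Packer's lemma), avoids the eigenvalue/Jordan-form case analysis in favor of a cleaner structural argument (positive-measure closed subgroups of $\mathbb{T}^n$ are open, hence everything), and handles all $d\geq 1$ uniformly without the reduction to $d=1$, since the $d$-torsion subgroup absorbs the general case. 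The paper's route, on the other hand, is shorter modulo the cited lemma and keeps the argument aligned with the dynamical (factor-map) picture used elsewhere in the paper, e.g.\ in Lemma \ref{3.7} and the appeals to \cite{wit}.
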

\begin{proof}
The direction $\supseteq$ is immediate. We just need to check $\subseteq$ holds. For this, it suffices (say by \cite[Lemma 1.6]{packer}) to check the essential freeness for the quotient action $SL_{n}(\mathbb{Z})\curvearrowright\widehat{d\mathbb{Z}^{n}}/{\sim}$, where we write $A_d\cong L^{\infty}(\widehat{d\mathbb{Z}^n}/{\sim})$.

To check this, we may assume without loss of generality that $d = 1$. Note that $\sim$ is defined on $\mathbb{T}^n$ as follows. For any $z=(z_1,\ldots,z_n), w=(w_1,\ldots ,w_n)\in \mathbb{T}^n$,  $z\sim w$ iff $z_i=w_i$ for all $1\le i \le n$ or $z_iw_i=1$  for all $1\le i \le n$.

Take any $e\ne g\in SL_{n}(\mathbb{Z})$, we aim to show that for the Haar measure $\mu$ on $\mathbb{T}^n$, we have
\begin{align*}
        \mu(\{z\in \mathbb{T}^{n}:gz\sim z\})=0.
    \end{align*}
Write $z=(z_1,\ldots,z_n)\in \mathbb{T}^n$, where $z_{k} = e^{i\theta_{k}}$ and $\theta_k \in \mathbb{R}$ for $1\le k \le n$. Then 
note that $gz\sim z$ iff we have either one of the following system of equations hold for some $k_i\in \mathbb{Z}, 1\le i \le n$,
 \begin{align*}
        (A-I_{n})\begin{pmatrix}
            \theta_{1}\\
            \vdots\\
            \theta_{n}
        \end{pmatrix}=\begin{pmatrix}
            2k_{1}\pi\\
            \vdots\\
            2k_{n}\pi
        \end{pmatrix}~\text{or~}
        (A+I_{n})\begin{pmatrix}
            \theta_{1}\\
            \vdots\\
            \theta_{n}
        \end{pmatrix}=\begin{pmatrix}
            2k_{1}\pi\\
            \vdots\\
            2k_{n}\pi
            \end{pmatrix},
    \end{align*}
 where we write $(g^{-1})^{T}:=A=(a_{ij})_{1\le i,j\le n}\ne I_{n}$.    

Note that the map $\mathbb{R}^n\ni(\theta_1,\ldots,\theta_n)\overset{\Phi}{\mapsto} (e^{i\theta_1},\ldots,e^{i\theta_n})\in\mathbb{T}^n$ relates the Lebesgue measure $\lambda$ on $\mathbb{R}^n$ to $\mu$ in the sense that for any Borel set $E\subseteq \mathbb{T}^n$, we have $\mu(E)=\dfrac{1}{(2\pi)^n} \lambda(\Phi^{-1}(E)\cap Q)$, where $Q=[0,2\pi)^n\subset\mathbb{R}^n$. Thus, the Haar measure of the set $\{z\in \mathbb{T}^{n}:gz\sim z\}$ vanishes if the Lebesgue measure of the solution sets to the above two linear systems of equations in $\mathbb{R}^n$ vanishes for all $k_i\in\mathbb{Z}, 1\leq i\leq n$.

For the first system of linear equations, we consider the following three cases. If $1$ is not an eigenvalue of $A$, then the first equation has a unique solution for each given $(k_1,\dots,k_n)$ and hence in this case we have at most countably many points in $\mathbb{T}^n$ with $gz=z$, and thus has measure zero. Now assume $1$ appears as an eigenvalue of $A$, say the multiplicity of $1$ is at most $n-1$, then $(\theta_1,\dots,\theta_n)$ has at most $n-1$  free coordinates for each fixed $(k_1,\dots,k_n)$ and thus the measure of $z$ with $gz=z$
still has measure zero. If the multiplicity of $1$ is $n$, since $A\ne I_n$, then $A$ can not be diagonalizable, and its Jordan canonical form contains at least one Jordan block $J_{m}(1)$ of size $m\ge 2$. Consequently, the coefficient matrix of the linear system satisfies rank $(A-I_n)\ge1$, hence the solution space has dimension at most $n-1$ in $\mathbb{R}^n$, and thus has vanishing Lebesgue measure.

For the second system of linear equations, we can argue similarly as above. If $-1$ is not an eigenvalue of $A$, then the second equation has a unique solution for each fixed $(k_1,\ldots,k_n)$. Thus the measure of $z$ with $gz \sim z$ but $gz\ne z$ is zero. If $A$ has $-1$ as an eigenvalue, note that it must has another eigenvalue which is not $-1$ since $n$ is odd, thus there is at most $n-1$ free coordinates for the solution to the second equation for each fixed $(k_1,\ldots,k_n)$, thus the measure is still zero.
\end{proof}

\begin{prop}\label{prop: version of prop 3.1}
Let $G_{n}=\mathbb{Z}^n\rtimes SL_n(\mathbb{Z}),n\ge3$ and n is odd. Let $P$ be a $G_{n}$-invariant von Neumann subalgebra in $(L(G_{n}),\tau)$. Then 
\begin{itemize}
\item either $P=L(H)$ for some normal subgroup $H\lhd G_{n}$; or,
\item $P=A_d$ for some $d\geq 1$, where $A_d\subset L(d\mathbb{Z}^n)$ is defined by $A_d=\{x\in L(d\mathbb{Z}^n): \tau(xs)=\tau(xs^{-1}), \forall s\in d\mathbb{Z}^n\}$, which agrees with the definition given in Lemma \ref{3.4}.
\end{itemize}
\end{prop}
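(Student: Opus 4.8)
The plan is to classify $P$ through its center $Z:=\mathcal{Z}(P)$, as announced in the introduction. Since $P$ is $G_n$-invariant so is $Z$, and $Z$ is abelian; once $Z$ is pinned down, I would recover $P$ from the inclusion $P\subseteq Z'\cap L(G_n)$, which is valid because $P$ commutes with its own center.

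\textbf{Classifying the center.} The first goal is to show $Z\subseteq L(\mathbb{Z}^n)$. For odd $n$ the amenable radical is $\mathrm{Rad}(G_n)=\mathbb{Z}^n$ (Lemma \ref{3.3}), and $Z$ is an abelian, hence amenable, $G_n$-invariant subalgebra. Encoding $Z$ by the conjugation-invariant character $\phi(g)=\tau(E_Z(u_g)u_g^{-1})$, where $E_Z$ is the trace-preserving conditional expectation onto $Z$, I would argue that amenability of $Z$ forces $\phi$ to be supported on $\mathbb{Z}^n$, via the character approach of \cites{jz,dj}; this places $Z$ inside $L(\mathbb{Z}^n)\cong L^\infty(\mathbb{T}^n)$. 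As conjugation by $\mathbb{Z}^n$ is trivial there, $Z$ is an $SL_n(\mathbb{Z})$-invariant subalgebra of $L^\infty(\mathbb{T}^n)$, i.e. a factor of the algebraic action $SL_n(\mathbb{Z})\curvearrowright\mathbb{T}^n$. Combining the classification of these factors (\cite[Example 5.9]{wit}) with the description of invariant subgroups of $\mathbb{Z}^n$ (Lemma \ref{lem: invariant subgroups in Zn}), the only possibilities are $Z=\mathbb{C}$, $Z=L(d\mathbb{Z}^n)$, or $Z=A_d$ for some $d\ge 1$, where $A_d\subseteq L(d\mathbb{Z}^n)$ is the fixed-point algebra of the inversion $z\mapsto z^{-1}$ on $\widehat{d\mathbb{Z}^n}\cong\mathbb{T}^n$. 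For odd $n$ this inversion is not implemented by any element of $SL_n(\mathbb{Z})$ (since $-I_n\notin SL_n(\mathbb{Z})$), which is precisely why $A_d$ is a genuinely new invariant subalgebra and not of the form $L(H)$.

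\textbf{Recovering $P$.} I would then treat the three cases using $P\subseteq Z'\cap L(G_n)$. If $Z=\mathbb{C}$, then $P$ is a $G_n$-invariant subfactor and Proposition \ref{prop: classify_invariant_subfactors} gives $P=L(N)$ for some normal $N\lhd G_n$. If $Z=A_d$, Lemma \ref{3.4} yields $Z'\cap L(G_n)=A_d'\cap L(G_n)=L(\mathbb{Z}^n)$, so $P$ lies in the abelian algebra $L(\mathbb{Z}^n)$ and hence $P=\mathcal{Z}(P)=A_d$. If $Z=L(d\mathbb{Z}^n)$, a direct Fourier computation gives $L(d\mathbb{Z}^n)'\cap L(G_n)=L(\mathbb{Z}^n)$ (a commuting element must have coefficients constant along the infinite orbits $w\mapsto w+(I_n-t)(d\mathbb{Z}^n)$ for $t\ne I_n$, forcing them to vanish by square-summability), so again $P\subseteq L(\mathbb{Z}^n)$ is abelian and $P=L(d\mathbb{Z}^n)=L(H)$ with $H=d\mathbb{Z}^n\lhd G_n$. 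Note that centers $A_d$ and $L(d\mathbb{Z}^n)$ are both diffuse, and since a normal subgroup $N\lhd G_n$ has its FC-center contained in $\mathbb{Z}^n$, the case $P=L(N)$ is consistently captured by $Z=\mathbb{C}$ or $Z=L(d\mathbb{Z}^n)$.

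\textbf{Main obstacle.} I expect the hard part to be the center classification, and within it the containment $Z\subseteq L(\mathbb{Z}^n)$: this is where rigidity of the system enters, through the character approach together with Burger's list of the $SL_n(\mathbb{Z})$-invariant probability measures on $\mathbb{T}^n$ (\cite[Proposition 9]{burger}, as already used in Lemma \ref{3.2}) and Witte's classification of factors. By contrast, the recovery step is formal once Lemma \ref{3.4} is available; and it is exactly in Lemma \ref{3.4} that the parity of $n$ is used, since ruling out the $(-1)$-eigenvalue locus of the second linear system relies on $n$ being odd, which guarantees a further eigenvalue different from $-1$.
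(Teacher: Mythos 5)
Your proposal follows the same route as the paper's own proof: first pin down $\mathcal{Z}(P)$, then recover $P$ from $P\subseteq \mathcal{Z}(P)'\cap L(G_n)$, treating the three possibilities $\mathbb{C}$, $L(d\mathbb{Z}^n)$, $A_d$ exactly as the paper does — Proposition \ref{prop: classify_invariant_subfactors} for the subfactor case, the Fourier/orbit-counting computation showing $L(d\mathbb{Z}^n)'\cap L(G_n)\subseteq L(\mathbb{Z}^n)$ (your square-summability argument along the infinite orbits $\{k+v-g\cdot k: k\in d\mathbb{Z}^n\}$ is precisely the paper's Case 1), and Lemma \ref{3.4} for the $A_d$ case. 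The identification of the possible centers via \cite[Example 5.9]{wit} also matches.

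The one genuine issue is your justification of the containment $\mathcal{Z}(P)\subseteq L(\mathbb{Z}^n)$. The paper gets this by citing \cite[Theorem A]{aho} — every amenable $G$-invariant von Neumann subalgebra of $L(G)$ is contained in $L(\mathrm{Rad}(G))$ — combined with Lemma \ref{3.3}, which computes $\mathrm{Rad}(G_n)=\mathbb{Z}^n$ for odd $n$. You instead assert that amenability of $\mathcal{Z}(P)$ forces the character $\phi(g)=\tau(E_{\mathcal{Z}(P)}(u_g)u_g^{-1})$ to be supported on $\mathbb{Z}^n$ ``via the character approach of \cites{jz,dj}.'' That machinery does not deliver this: the character input used in this paper (Lemma \ref{3.2}) concerns two $SL_n(\mathbb{Z})$-invariant characters whose pointwise product vanishes off the identity, and nothing in \cites{jz,dj} converts amenability of a subalgebra into a support restriction on its associated character. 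The step you need is exactly \cite[Theorem A]{aho}, so the gap is repaired by a citation rather than a new argument; with that substitution your proof coincides with the paper's.
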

\begin{proof}
Let $\mathcal{Z}(P)$ be the center of $P$. Since $\mathcal{Z}(P)$ is abelian and hence amenable and $G_n$-invariant, we deduce that $\mathcal{Z}(P) \subseteq L(\text{Rad}(G_n))$ by \cite[Theorem A]{aho}, where $\text{Rad}(G_n)$ denotes the amenable radical of $G_n$. We know that $L(\text{Rad}(G_n)) = L(\mathbb{Z}^n)$ from Lemma \ref{3.3}. Note that as an $SL_n(\mathbb{Z})$-invariant abelian von Neumann subalgebra of $L(\mathbb{Z}^n)$, $\mathcal{Z}(P)\cap L(\mathbb{Z}^n)=L^{\infty}(Y)$ for some (measurable) factor map  $SL_n(\mathbb{Z})\curvearrowright \mathbb{T}^n\rightarrow Y$. 
Note that all (measurable) factors of the standard action $SL_n(\mathbb{Z})\curvearrowright \mathbb{T}^n$ are classified in  \cite[Example 5.9]{wit}, we obtain that $\mathcal{Z}(P)\cap L(\mathbb{Z}^n)=\mathbb{C}$, $L(d\mathbb{Z}^n)$ or $A_d$ for some $d\geq 1$. Then we deduce that $\mathcal{Z}(P)=\mathbb{C}$, $L(d\mathbb{Z}^n)$ or $A_d$ for some $d\geq 1$ from $\mathcal{Z}(P) \subseteq L(\mathbb{Z}^n)$. Since the subfactor case, i.e. the case $\mathcal{Z}(P)=\mathbb{C}$, is already handled by Proposition \ref{prop: classify_invariant_subfactors}. We only need to consider two cases.

\textbf{Case 1.} $\mathcal{Z}(P) = L(d\mathbb{Z}^n)$ for some $d\ge 1$.

\textbf{Claim.} $P = L(d\mathbb{Z}^n)$.

Indeed, note that $    P\subseteq \mathcal{Z}(P)' \cap L(G_{n})$, we just need to check that 
\begin{equation*}
    \mathcal{Z}(P)' \cap L(G_{n}) \subseteq L(\mathbb{Z}^n).
\end{equation*}

Take any $a \in L(d\mathbb{Z}^n)' \cap L(G_{n})$. Write  $a=\sum_{v\in \mathbb{Z}^n, g\in SL_{n}(\mathbb{Z})}\lambda_{v,g}u_{(v,g)}$, $\lambda_{v,g} \in \mathbb{C}$ for its Fourier expansion, where $u_{(v,g)}$ denotes the unitary corresponding to the group element$(v,g)$ and $\sum\limits_{v,g} |\lambda_{v,g}|^2 <\infty$. 

For any $k\in d\mathbb{Z}^n$, the commutation $au_{(k,I_{n})} = u_{(k,I_{n})}a$ gives
\begin{align*}
    u_{(k,I_{n})}au_{(k,I_{n})}^{-1}&=\sum\limits_{v,g}\lambda_{v,g}u_{(k+v-g\cdot k,g)}=a=\sum\limits_{v,g}\lambda_{v,g}u_{(v,g)}\\
    &=\sum\limits_{v,g}\lambda_{(k+v-g\cdot k),g}u_{(k+v-g\cdot k,g)}.
\end{align*}

Comparing coefficients of $u_{(k+v-g\cdot k,g)}$ on both sides, we obtain $\lambda_{v,g}=\lambda_{(k+v-gk),g}$,$\forall k\in d\mathbb{Z}^n$ and $g\in SL_n(\mathbb{Z})$. Now, for fixed $v$ and $g$, consider the set
\begin{align*}
    \#\{k+v-g\cdot k:k\in d\mathbb{Z}^n\}=\begin{cases}
        \ \infty, \ \ \ \ \ g\ne I_{n}\\
        <\infty,\ \ g=I_{n}.
    \end{cases}
\end{align*}
If there exist $g\ne I_{n}$ and some $v\in \mathbb{Z}^n$ such that $\lambda_{v,g} \ne 0$, then by the identity $\lambda_{v,g}=\lambda_{(k+v-gk),g}$,$\forall k\in d\mathbb{Z}^n$, for infinitely many $w\in \{k+v-g\cdot k:k\in d\mathbb{Z}^n\}$, we obtain 
\begin{align*}
    \sum_{w\in \mathbb{Z}^n}|\lambda_{w,g}|^2 \ge \sum\limits_{w\in\{k+v-g\cdot k:k\in d\mathbb{Z}^n\}} |\lambda_{w,g}|^2 = |\lambda_{v,g}|^2\cdot\infty =\infty,
\end{align*}
This contradicts the condition $\sum\limits_{w,g}|\lambda_{w,g}|^2<\infty$. Therefore $\lambda_{v,g} = 0$ for all $g\ne I_{n}$ and $v\in\mathbb{Z}^n$. Consequently, $a = \sum\limits_{v\in \mathbb{Z}^n}\lambda_{v,I_{n}}u_{(v,I_{n})}\in L(\mathbb{Z}^n)$. 

We obtain $P\subseteq \mathcal{Z}(P)' \cap L(G_{n}) = L(\mathbb{Z}^n)$. Hence $P$ is abelian and thus $P =$$~\mathcal{Z}(P) = L(d\mathbb{Z}^n)$.

\textbf{Case 2.} $\mathcal{Z}(P) = A_{d}$ for some $d\ge1$.

\textbf{Claim.} $P=A_d$.

We know that $P\subseteq A_d' \cap L(G_{n}) = L(\mathbb{Z}^n)$ by Lemma \ref{3.4}, which implies $P$ is abelian and hence $P=\mathcal{Z}(P) = A_d$.
\end{proof}

\subsection{The proof for even $n$}

We need to prepare one lemma, which should be compared with Lemma \ref{3.4}.

\begin{lem}\label{3.7}
Let $G_{n}=\mathbb{Z}^n \rtimes SL_n(\mathbb{Z}), n\ge 2$ and $n$ is even. Let $d\ge1$ and $A_{d}:=\{\sum_{v\in d\mathbb{Z}^{n}}c_{v}v:c_{v}=c_{-v}\in\mathbb{C},\forall v\in d\mathbb{Z}^{n}\}\subseteq L(d\mathbb{Z}^n)$. Then $A_{d}' \cap L(G_{n}) \subset L(\mathbb{Z}^n \rtimes \{\pm I_{n}\})$.
\end{lem}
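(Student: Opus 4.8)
The plan is to follow the strategy of Lemma \ref{3.4}, isolating the single feature that genuinely changes when $n$ is even: since $\det(-I_n)=(-1)^n=1$, the element $-I_n$ now belongs to $SL_n(\mathbb{Z})$, and it acts on $\widehat{d\mathbb{Z}^n}/{\sim}$ \emph{trivially}, because $(-I_n)z=z^{-1}\sim z$ for every $z$. Consequently the quotient action $SL_n(\mathbb{Z})\curvearrowright\widehat{d\mathbb{Z}^n}/{\sim}$ can no longer be essentially free, and the correct replacement for \say{the action is free} is that the set of elements acting trivially on a positive-measure subset is exactly $\{\pm I_n\}$. As in Lemma \ref{3.4} I would first reduce to $d=1$ and identify $A_1\cong L^\infty(\mathbb{T}^n/{\sim})$, where $\sim$ is coordinatewise inversion and the symmetric elements $u_v+u_{-v}$ separate the $\sim$-classes.

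Next I would compute the relative commutant through the crossed-product Fourier decomposition. Writing $a=\sum_{g\in SL_n(\mathbb{Z})}a_g u_g$ with $a_g\in L(\mathbb{Z}^n)=L^\infty(\mathbb{T}^n)$, commutation of $a$ with every $b\in A_d$ forces $a_g(z)\,[\,b(g^{-1}z)-b(z)\,]=0$ for each $g$, so that $a_g$ must be supported on $\mathrm{Fix}_g:=\{z\in\mathbb{T}^n:gz\sim z\}$; this is the content of \cite[Lemma 1.6]{packer}, with the support condition now replacing the essential-freeness conclusion used in Lemma \ref{3.4}. The lemma then reduces to the measure-theoretic claim that $\mu(\mathrm{Fix}_g)=0$ for every $g\in SL_n(\mathbb{Z})$ with $g\ne\pm I_n$, where $\mu$ is the Haar measure. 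Granting this, $a_g=0$ for all $g\ne\pm I_n$, and hence $a=a_{I_n}+a_{-I_n}u_{-I_n}\in L(\mathbb{Z}^n)+L(\mathbb{Z}^n)u_{-I_n}=L(\mathbb{Z}^n\rtimes\{\pm I_n\})$, as required.

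For the measure computation I would argue exactly as in Lemma \ref{3.4}, passing to the logarithmic coordinates $z_k=e^{\sqrt{-1}\theta_k}$ and setting $A=(g^{-1})^T$. Then $\mathrm{Fix}_g$ is the image of the solution sets of the two linear systems $(A-I_n)\boldsymbol\theta\in 2\pi\mathbb{Z}^n$ and $(A+I_n)\boldsymbol\theta\in 2\pi\mathbb{Z}^n$. Each solution set of $(A\mp I_n)\boldsymbol\theta=2\pi\boldsymbol k$ is either empty or an affine subspace of dimension $\dim_{\mathbb{R}}\ker(A\mp I_n)$, so summing over the countably many $\boldsymbol k$ it carries positive Lebesgue measure only when $\dim\ker(A\mp I_n)=n$, i.e. only when $A=\pm I_n$. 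Since $g\mapsto(g^{-1})^T$ is a bijection fixing $\pm I_n$, the hypothesis $g\ne\pm I_n$ yields $A\ne\pm I_n$, whence both geometric multiplicities are $<n$ and $\mu(\mathrm{Fix}_g)=0$.

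The main obstacle, and the place where the even case departs from Lemma \ref{3.4}, is the $-1$ eigenvalue branch, which was vacuous for odd $n$ (there $-I_n\notin SL_n(\mathbb{Z})$, so $A+I_n$ could never vanish). Here one must rule out the degenerate possibility that $-1$ is an eigenvalue of $A$ of full algebraic multiplicity while $A\ne -I_n$; the point is that the measure is governed by the \emph{geometric} multiplicity, and if every eigenvalue equals $-1$ but $A\ne -I_n$ then the Jordan form of $A$ contains a block $J_m(-1)$ with $m\ge 2$, so that $\mathrm{rank}(A+I_n)\ge 1$ and $\dim\ker(A+I_n)\le n-1$. The same Jordan-block reasoning already handled the eigenvalue $+1$ in Lemma \ref{3.4}, so invoking it for both signs closes both branches and confirms that $\{\pm I_n\}$ is precisely the set of elements acting trivially on a positive-measure subset of $\mathbb{T}^n/{\sim}$.
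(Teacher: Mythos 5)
Your proposal is correct, and its core coincides with the paper's proof: after reducing to $d=1$, both arguments Fourier-expand an element $a=\sum_{g}a_g u_g$ of the relative commutant in $L^\infty(\mathbb{T}^n,\mu)\rtimes SL_n(\mathbb{Z})$ and kill every coefficient with $g\neq \pm I_n$ via the measure statement $\mu(\{z:\ gz\sim z\})=0$, proved by exactly the two linear systems $(A\mp I_n)\boldsymbol{\theta}\in 2\pi\mathbb{Z}^n$ and the Jordan-block bound; this is the paper's Observation \ref{observation2}, and your remark that only the geometric multiplicity $\dim\ker(A\mp I_n)$ matters (it equals $n$ only for $A=\pm I_n$) even streamlines the paper's eigenvalue case analysis. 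The genuine difference is how each proof turns ``commutation plus null fixed sets'' into $a_g=0$. The paper works downstairs on $Y=\widehat{d\mathbb{Z}^n}/{\sim}$: it builds the faithful conditional expectation $E:L^\infty(X)\to L^\infty(Y)$ from the disintegration of $\mu$, derives $E(a_g^*a_g)(\sigma_g(\xi)-\xi)=0$ for all $\xi\in L^\infty(Y)$, and then, since $g\neq\pm I_n$ acts essentially freely on $Y$, invokes \cite[Proposition 4.22]{dl} to produce a positive-measure set $B$ with $gB\cap B=\emptyset$, forcing $E(a_g^*a_g)=0$ and, by faithfulness, $a_g=0$. You stay upstairs on $X=\mathbb{T}^n$: the relation $a_g(z)\,[b(g^{-1}z)-b(z)]=0$ for all $b\in A_1$ pins the support of $a_g$ inside $\mathrm{Fix}_g$, which is already $\mu$-null; this bypasses the expectation, its faithfulness, and the disjointification lemma entirely, and is the more elementary route. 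Two details you should make explicit for it to be airtight: (i) the passage to ``$a_g$ is supported on $\mathrm{Fix}_g$'' requires testing against a countable separating family, e.g. $b_v=u_v+u_{-v}$, $v\in\mathbb{Z}^n$, which does separate $\sim$-classes because $z^v+z^{-v}=w^v+w^{-v}$ for all $v$ forces $\delta_z+\delta_{z^{-1}}=\delta_w+\delta_{w^{-1}}$ as measures, together with the observation that $gz\sim z$ iff $g^{-1}z\sim z$; (ii) your attribution of this support statement to \cite[Lemma 1.6]{packer} is loose, since that lemma, as used in Lemma \ref{3.4}, is the essential-freeness statement --- precisely what fails here because $-I_n$ acts trivially on $Y$ --- but your direct derivation renders the citation unnecessary.
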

\begin{proof}
Write $X = \mathbb{T}^n$ and $Y = \widehat{d\mathbb{Z}^{n}}/{\sim}$. Without loss of generality, we may assume that $d=1$. Note that $\sim$ is defined on $\mathbb{T}^n$ via $z\sim w$, where $z=(z_1,\ldots,z_n), w=(w_1,\ldots ,w_n)\in \mathbb{T}^n$, iff $z_i=w_i$ for all $1\le i \le n$ or $z_iw_i=1$  for all $1\le i \le n$. Hence we have $L(G_n) \cong L^{\infty}(X, \mu) \rtimes SL_n(\mathbb{Z})$ and $A_d \cong L^{\infty}(Y, \nu) $, where $\mu$ is the Haar measure on $X$ and $\nu$ is the pushforward measure $\pi_*\mu$ on $Y$ with respect to the factor map  $\pi : SL_n(\mathbb{Z})\curvearrowright (X,\mu) \rightarrow (Y,\nu)$. 

Let $E:L^{\infty}(X, \mu) \rightarrow L^{\infty}(Y, \nu)$ be a conditional expectation. Then $E(f)(y) = \int_X f(x)d\mu_{y}(x)$, where $f\in L^{\infty}(X, \mu)$ and $\mu=\int_Y \mu_{y}d\nu(y)$ is the measure decompositionwith respect to $\pi$.
Note that $E$ satisfies $E|_{L^{\infty}(Y,\nu)} = id$ and $E(\xi f) = E(\xi)f,~\forall \xi \in L^{\infty}(X, \mu),~f \in L^{\infty}(Y, \nu)$. Note that $E$ is also faithful, i.e. if $f\ge0$ in $ L^{\infty}(X, \mu)$ and $E(f)=0$, then $f=0$. 
Indeed, $E(f) = 0 \Leftrightarrow \int_X f(x)d\mu_y(x)= 0$ for $\nu$-a.e. $y$. Since $f(x)\ge 0$ for $\mu$-a.e, we deduce that $f(x) = 0$ for $\mu_{y}$-a.e. $x$ and $\nu$-a.e. $y$. Then $\mu\{x:f(x)\ne 0\} = \int_Y \mu_{y}(\{x:f(x)\ne 0\})d\nu(y) = 0$, i.e. $f=0$. 

Take any $a \in L^{\infty}(Y, \nu)' \cap [L^{\infty}(X,\mu)\rtimes SL_{n}(\mathbb{Z})]$. Write  $a=\sum_{g\in SL_{n}(\mathbb{Z})}f_{g}g$ for its Fourier expansion, where $f_g \in L^{\infty}(X, \mu)$ and $g$ is an abbreviation for the unitary $u_g = \sigma_{g}\otimes \lambda_g \in B(L^2(X,\mu)\otimes l^2(SL_n(\mathbb{Z})))$. For any $\xi\in  L^{\infty}(Y, \nu)$, the commutation $a\xi = \xi a$ gives
\begin{align*}
    a\xi&=(\sum\limits_{g}f_g g)\xi=\sum\limits_{g}(f_g \sigma_{g}(\xi))g,\\
    \xi a&=\sum\limits_{g}( \xi f_g )g.
\end{align*}

Comparing coefficients of $g$, we obtain that $f_g(\sigma_g(\xi)-\xi)=0$, hence $f_g^*f_g(\sigma_g(\xi)-\xi)=0$, where $f_g^*$ denotes the complex conjugate of $f_g$. Apply $E(\cdot)$ on both sides, we get that 
\begin{equation}\label{lemma7 equation 1}
0=E(f_g^*f_g(\sigma_g(\xi)-\xi)) = E(f_g^*f_g)(\sigma_g(\xi)-\xi),
\end{equation} for all $g\in SL_{n}(\mathbb{Z}) $ and $\xi \in L^{\infty}(Y,\nu)$.
The last equality holds because $\sigma_g(\xi)-\xi \in L^{\infty}(Y, \nu)$.

The following observation is an analogue of a piece argument used in Lemma \ref{3.4} for even $n$ with a similar proof. Below, we write $s=I_n$, the $n\times n$ identity matrix in $SL_n(\mathbb{Z})$.

\begin{observation}\label{observation2}
    For any $g\in SL_{n}(\mathbb{Z})\setminus \{I_n,s\}$, then $ \mu(\{z\in \mathbb{T}^{n}:gz\sim z\})=0$.
\end{observation}
\begin{proof} Write $z=(z_1,\dots,z_n)\in \mathbb{T}^n$, where $z_{k} = e^{i\theta_{k}}$ and $\theta_k \in \mathbb{R}$ for $1\le k \le n$. Then 
note that $gz\sim z$ iff we have the following equation holds, where $(g^{-1})^{T}:=A=(a_{ij})_{1\le i,j\le n}\ne I_{n}$ or $s$, 
 \begin{align*}
        (A-I_{n})\begin{pmatrix}
            \theta_{1}\\
            \vdots\\
            \theta_{n}
        \end{pmatrix}=\begin{pmatrix}
            2k_{1}\pi\\
            \vdots\\
            2k_{n}\pi
        \end{pmatrix}~\text{or~}
        (A+I_{n})\begin{pmatrix}
            \theta_{1}\\
            \vdots\\
            \theta_{n}
        \end{pmatrix}=\begin{pmatrix}
            2k_{1}\pi\\
            \vdots\\
            2k_{n}\pi
            \end{pmatrix},
    \end{align*}\\
for some $k_i\in \mathbb{Z}, 1\le i \le n$.    

For the first system of linear equations, following exactly the same proof as in Lemma \ref{3.4}, we consider the case where the eigenvalue $1$ appears and conclude that  $ \mu(\{z\in \mathbb{T}^{n}:gz = z\})=0$.

For the second system of equations, we consider the following three cases. If $-1$ is not an eigenvalue of $A$, then the first equation has a unique solution for each given $(k_1,\dots,k_n)$ and hence in this case we have at most countably many points in $\mathbb{T}^n$ with $gz\sim z$ but $gz\ne z$, and thus has measure zero. Now assume $-1$ appears as an eigenvalue of $A$, say the multiplicity of $-1$ is at most $n-1$, then $(\theta_1,\dots,\theta_n)$ has at most $n-1$  free coordinates for each fixed $(k_1,\dots,k_n)$ and thus the set of $z$ with $gz\sim z$ but $gz\ne z$ still has measure zero. If the multiplicity of $-1$ is $n$, then since $A\ne s$, then $A$ can not be diagonalizable, and its Jordan canonical form contains at least one Jordan block $J_{m}(1)$ of size $m\ge 2$. Consequently, the coefficient matrix of the linear system satisfies rank $(A+I_n)\ge1$, hence the solution space has dimension at most $n-1$, thus again the measure  of $z$ satisfying $gz\sim z$ but $gz\ne z$ is zero.
\end{proof}
Combining  \eqref{lemma7 equation 1} with the above observation, we claim that $E(f^*_g f_g) = 0$ for all $g\not\in\{I_n, s\}$. Since $E$ is faithful, we obtain $f_g = 0$ for all $g\not\in\{I_n, s \}$, which shows that $a\in L^\infty(X) \rtimes \{\pm I_n\} \cong L(\mathbb{Z}^n \rtimes \{\pm I_n\})$.

Indeed, assume that for some $g\ne I_n$ and $s$, we have $E(f^*_g f_g) \ne 0 $, this implies $\nu(U)>0$, where $U:=\{y\in Y~|~E(f^*_g f_g)(y) \ne 0\}$. By Observation \ref{observation2}, $\nu\{y\in Y~|~gy=y\} = 0$. Reference \cite[Proposition 4.22]{dl} tells us that there exists some $B\subseteq Y$ such that $B\subseteq U$ with $\nu(B)\ge 0$ and $gB\cap B = \phi $. Set $\xi = \chi_{B}$, the characteristic funtion on $B$, then for a.e. $y\in B$, we get $0 = E(f^*_g f_g) (y)(\sigma_g(\xi)-\xi)(y)=-E(f^*_g f_g) (y)$, a contradiction.
\end{proof}

The following lemma will be applied to $Q=\mathcal{Z}(P)$, where $P$ is a $G_n$-invariant von Neumann subalgebra in $L(G_n)$ for even $n$.

\begin{lem}\label{3.5}Let $n\geq 2$ be an even number.
Let $Q\subsetneq L(\mathbb{Z}^{n} \rtimes \{\pm I_{n}\})$ be a $G_{n}$-invariant abelian von Neumann subalgebra. Then $Q\subseteq L(\mathbb{Z}^n)$ and hence $Q=\mathbb{C}$, $L(d\mathbb{Z}^n)$ or $A_d$ for some $d\geq 1$.    
\end{lem}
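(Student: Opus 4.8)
The plan is to work with the $\tau$-preserving conditional expectation $E\colon L(G_n)\to Q$, which is $G_n$-equivariant because $Q$ is $G_n$-invariant, and to feed its values on the reflection coset into Lemma~\ref{3.6}. Throughout I write $s=-I_n\in SL_n(\mathbb{Z})$ (which lies in $SL_n(\mathbb{Z})$ since $n$ is even) and $\Lambda=\mathbb{Z}^n\rtimes\{\pm I_n\}$, and I use the decomposition $L(\Lambda)=L(\mathbb{Z}^n)\oplus L(\mathbb{Z}^n)u_s$, writing elements of $L(\Lambda)$ as $x_0+x_1u_s$ with $x_0,x_1\in L(\mathbb{Z}^n)\cong L^\infty(\mathbb{T}^n)$; here $\chi_m\in L^\infty(\mathbb{T}^n)$ denotes the character of $m\in\mathbb{Z}^n$, and $\sigma_s$ is the flip $\sigma_s(f)(z)=f(z^{-1})$, i.e. conjugation by $u_s$ on $L(\mathbb{Z}^n)$.

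\textbf{Reduction.} I would first observe that, since $Q\subseteq L(\Lambda)$, the inclusion $Q\subseteq L(\mathbb{Z}^n)$ is equivalent to $E(u_{(v,s)})=0$ for all $v\in\mathbb{Z}^n$ (in the notation of Lemma~\ref{3.6}, $E(vs)=0$). Indeed, each $(v,s)=(v,-I_n)$ is an involution, so for $a\in Q$ the $Q$-bimodularity of $E$ gives $\tau\big(a\,u_{(v,s)}\big)=\tau\big(a\,E(u_{(v,s)})\big)$; if every $E(u_{(v,s)})$ vanishes, then all Fourier coefficients of $a$ on the nontrivial coset $\mathbb{Z}^n\times\{-I_n\}$ vanish, forcing $a\in L(\mathbb{Z}^n)$. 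Applying Lemma~\ref{3.6} with $P=Q$, it then suffices to establish the two base cases $E(s)=0$ and $E(e_1s)=0$.

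\textbf{Base cases (the heart).} Fix $w_0\in\{0,e_1\}$ and put $x:=E(u_{(w_0,s)})=c_0+c_1u_s\in Q$; since $(w_0,s)$ is a self-adjoint involution, $x=x^*$. Conjugating by $u_v$ and using $u_vu_su_v^{-1}=u_{(2v,s)}$ yields $u_vxu_v^{-1}=c_0+c_1\chi_{2v}u_s=E(u_{(2v+w_0,s)})\in Q$ for every $v\in\mathbb{Z}^n$. As $Q$ is abelian these elements pairwise commute; comparing the $L(\mathbb{Z}^n)$-components (the part not carrying $u_s$) of the commutation relation for parameters $v$ and $w$ gives $c_1\sigma_s(c_1)\,\big(\chi_{2(v-w)}-\chi_{2(w-v)}\big)=0$, and since $\chi_{2(v-w)}-\chi_{2(w-v)}$ is nonzero almost everywhere for $v\ne w$, I obtain $c_1\sigma_s(c_1)=0$ a.e. On the other hand, $x=x^*$ forces $c_1=\sigma_s(\overline{c_1})$, so $|c_1|$ is $\sigma_s$-invariant and $\operatorname{Supp}(c_1)$ is invariant under $z\mapsto z^{-1}$; together with $c_1\sigma_s(c_1)=0$ this forces $c_1=0$. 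Hence $x=c_0\in L(\mathbb{Z}^n)$, and because $x=E(u_{(w_0,s)})$ is the $L^2$-projection of $u_{(w_0,s)}$ onto $L^2(Q)$, faithfulness of $\tau$ gives $\|x\|_2^2=\tau\big(x^*u_{(w_0,s)}\big)=\tau\big(c_0^*\chi_{w_0}u_s\big)=0$ (this term lies in the nontrivial coset), so $x=0$.

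\textbf{Conclusion and main obstacle.} Combining the two base cases with Lemma~\ref{3.6} gives $E(u_{(v,s)})=0$ for all $v$, whence $Q\subseteq L(\mathbb{Z}^n)$ by the Reduction. Then $Q$ is an $SL_n(\mathbb{Z})$-invariant abelian subalgebra of $L(\mathbb{Z}^n)\cong L^\infty(\mathbb{T}^n)$, and the classification of factors of $SL_n(\mathbb{Z})\curvearrowright\mathbb{T}^n$ in \cite[Example 5.9]{wit}, exactly as used in Proposition~\ref{prop: version of prop 3.1}, yields $Q=\mathbb{C}$, $L(d\mathbb{Z}^n)$, or $A_d$ for some $d\ge1$. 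The main obstacle will be the base case, namely deducing $c_1=0$ from abelianness alone: the commutation identity $c_1\sigma_s(c_1)=0$ by itself only says that $\operatorname{Supp}(c_1)$ and its image under $z\mapsto z^{-1}$ are essentially disjoint, so it must be paired with the flip-invariance of $\operatorname{Supp}(c_1)$ coming from self-adjointness of the reflections $u_{(w_0,s)}$, neither input being sufficient on its own.
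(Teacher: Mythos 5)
Your proof is correct, and it takes a genuinely different and substantially shorter route than the paper's. The paper proves $E(s)=0$ in a similar spirit (it identifies $E(s)\in L(SL_n(\mathbb{Z}))'\cap L(G_n)=L(\{\pm I_n\})$, runs an idempotent computation, and excludes $E(s)=s$ by the same abelianness trick with the conjugates $(2a)s$), but it handles $E(e_1s)$ entirely differently: it first classifies $Q\cap L(\mathbb{Z}^n)$ via \cite[Example 5.9]{wit} into the three cases $\mathbb{C}$, $L(d\mathbb{Z}^n)$, $A_d$, and then in each case kills $E(e_1s)=a+bs$ through lengthy Fourier-coefficient computations built on the bimodularity identity $E(e_1s)E(f_1s)=E(e_1sE(f_1s))$, with the $A_d$ case splitting further into $d=1$ and $d\geq 2$. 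Your uniform treatment of both base cases of Lemma \ref{3.6} --- conjugating $x=E(u_{(w_0,s)})=c_0+c_1u_s$ by $u_v$ to get the commuting family $c_0+c_1\chi_{2v}u_s\in Q$, extracting $c_1\sigma_s(c_1)=0$ from abelianness, and combining with self-adjointness of the involution $u_{(w_0,s)}$ --- collapses that entire case analysis, and Witte's classification is invoked only once, at the very end, applied to $Q$ itself rather than to $Q\cap L(\mathbb{Z}^n)$. (You can even streamline your ending: self-adjointness gives $\sigma_s(c_1)=\overline{c_1}$, so $c_1\sigma_s(c_1)=|c_1|^2$ and no support argument is needed.) The trade-off, and what the paper's longer route buys, is reusability: as recorded in Remark \ref{remark: abelian assumption is used to show E(s)=0}, the paper's Step-2 computations use only the conclusion $E(s)=0$ and never the abelianness of $Q$, and precisely those computations are recycled in the proof of Proposition \ref{prop: version of prop 3.2}, where the invariant subalgebra $P$ is not assumed abelian (only $\mathcal{Z}(P)$ is). Your argument needs abelianness in both base cases --- the commuting family is its engine --- so it proves Lemma \ref{3.5} as stated but could not be transplanted into that later proof.
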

%%%%%%%%%%%%%%%%%%%%%%%%%%%%%%%%%%%%%%%%%%%%
\begin{proof}
Let $E: L(\mathbb{Z}^n\rtimes \{\pm{I_{n}}\})\rightarrow Q$ be the trace $\tau$-preserving conditional expectation onto $Q$. Below, we directly write $g$ for the canonical unitary $u_g$ inside $L(G_{n})$ for simplicity and we use the notation $\langle a, b \rangle$ to mean $\tau(b^*a)$  for any $a, b\in L(G_{n})$. And we denote $s=-I_{n}\in SL_{n}(\mathbb{Z})$.

\textbf{Step 1.} We show that $E(s)=0$ using the abelian assumption of $Q$.

Note that $L(SL_n(\mathbb{Z}))'\cap L(\mathbb{Z}^n\rtimes SL_n(\mathbb{Z}))=L(\{\pm I_{n}\})$, say by a direct calculation using \cite[Lemma 2.7]{aj}.

Since $Q$ is $G_{n}$-invariant, we deduce that $gE(s)g^{-1}=E(gsg^{-1})=E(s)$ for all $g\in SL_n(\mathbb{Z})$. Hence, 
$E(s)\in L(SL_n(\mathbb{Z}))'\cap L(\mathbb{Z}^n\rtimes SL_n(\mathbb{Z}))=L(\{\pm I_{n}\})=L(\langle s\rangle)$. Therefore, we may write $E(s)=\lambda+\mu s$, where $\lambda,\mu\in\mathbb{C}$. Observe that $\lambda=\tau(E(s))=\tau(s)=0$. Hence, $\mu^2=(\mu s)^2=E(s)E(s)=E(sE(s))=E(s\mu s)=\mu$. It follows that $\mu=0$ or $1$. Hence, $E(s)=0$ or $s$. But $E(s)=s$ is impossible for the following reason. Assume that $E(s)=s$, then we have $s\in Q$. And $Q$ is $G_{n}$-invariant, which implies that $(as)E(s)(as)^{-1} = (as)s(as)^{-1} =(2a)s\in Q$ for all $a\in \mathbb{Z}^{n}$.
Then we can deduce that $2a = [(2a)s]s=s[(2a)s]=(-2a)ss=-2a$ for all $a\in \mathbb{Z}^{n}$, where the second equality holds since $Q$ is abelian. This gives us a contradiction. Hence we have shown that $E(s)=0$, finishing Step 1.

\textbf{Step 2.} We show that $Q\cap L(\mathbb{Z}^n)=Q$.

Since $Q\cap L(\mathbb{Z}^n)$ is an abelian von Neumann subalgebra in $L(\mathbb{Z}^n)$ which is $SL_n(\mathbb{Z})$-invariant, it equals $L^{\infty}(Y)$ for some (measurable) factor map $SL_n(\mathbb{Z})\curvearrowright \mathbb{T}^n\rightarrow Y$. According to the full classification of all such (measurable) factor maps as in
 \cite[Example 5.9]{wit}, we can deduce that $Q\cap L(\mathbb{Z}^n)=\mathbb{C}$, $L(d\mathbb{Z}^n)$ or $A_d$ for some $d\geq 1$. Thus we split the proof by considering these three possibilities and argue that in fact $Q\cap L(\mathbb{Z}^n)=Q$ always holds ture.

\textbf{Case 1.} $Q\cap L(\mathbb{Z}^n)=\mathbb{C}$.

We claim that $Q=\mathbb{C}$.

First, observe that for any $v\in \mathbb{Z}^n$, we have 
\begin{align*}
E(v)\in Q\cap L(\mathbb{Z}^n)'=Q\cap L(\mathbb{Z}^n)=\mathbb{C},
\end{align*}
where to get the 2nd equality, we have used the fact that $L(\mathbb{Z}^n)$ is a MASA (maximal abelian von Neumann subalgebra) in $L(G_{n})$.
Thus, $\forall v\in \mathbb{Z}^n\setminus \{0\}$, we get $E(v)=\tau(E(v))=\tau(v)=0$.

We are left to show $E(vs)=0$ for all $v\in\mathbb{Z}^n\setminus \{0\}$. By Lemma \ref{3.6}, it suffices to show that $E(e_1s)=0$. 

First, notice that
\begin{align*}
E(e_1s)\in L(\{\left(\begin{smallmatrix}
1&\boldsymbol{z}\\
0&I_{n-1}
\end{smallmatrix}\right) \})'\cap L(\mathbb{Z}^n\rtimes \{\pm I_{n}\})\subseteq L((\mathbb{Z},0,\dots, 0)^t)\rtimes \{\pm I_{n}\}.
\end{align*}
where $\boldsymbol{z}\in M_{1, n-1}(\mathbb{Z})$. Thus, we may write $E(e_1s)=a+bs$, where $a, b\in L((\mathbb{Z}, 0,\dots,0)^t)$.

From $\langle v-E(v), E(e_1s)\rangle=0$, we get that $\langle v, a \rangle=0$ for all $v\in\mathbb{Z}^n\setminus \{0\}$. Hence, $a\in\mathbb{C}$. Then by computing the trace of $E(e_1s)$, we get that $a=\tau(a+bs)=\tau(E(e_1s))=\tau(e_1s)=0$. Hence, $E(e_1s)=bs$. 

Let us write $b=\sum_{n\in\mathbb{Z}}\mu_ne_n$, where $\mu_n\in\mathbb{C}$ and $e_n=(n,0,\ldots, 0)^t$. Set $f_n=(0, n,0,\dots,0)^t\in\mathbb{Z}^n$. 
Notice that $f_n=g\cdot e_n$, where $g = \left(\begin{smallmatrix}
 0 & -1 \\
1 & 0 \\
     & & I_{n-2}
\end{smallmatrix}\right)\in SL_{n}(\mathbb{Z})$. Thus,
$E(f_1s)=E((g\cdot e_{1})s)=E(ge_{1}g^{-1}s)=gE(e_{1}s)g^{-1}=\sum_{n\in\mathbb{Z}}\mu_n f_ns$. 

By $Q$-bimodule property of $E$, we have $E(e_1s)E(f_1s)=E(e_1sE(f_1s))$. Let us compute both sides concretely.
\begin{align*}
E(e_1s)E(f_1s)&=\sum_{n, m\in\mathbb{Z}}\mu_m\mu_n(m,-n,0,\ldots, 0)^t ,\\
E(e_1sE(f_1s))&=E(e_1s(\sum_{m\in\mathbb{Z}}\mu_mf_ms))=\sum_{m\in\mathbb{Z}}\mu_mE((1,-m,0,\ldots,0)^t)=0,
\end{align*}
where to get the last equality, we used the fact that $E(v)=0$ for all $v\in\mathbb{Z}^n\setminus\{0\}$.
Hence, $\mu_m\mu_n=0$ for all $(m, n)\in\mathbb{Z}^2$. Thus, $\mu_n^2=0$, i.e. $\mu_n=0$ for all $n\in\mathbb{Z}$; equivalently, $b=0$ and thus $E(e_1s)=0$. The proof of this case is done.

\textbf{Case 2.} $Q\cap L(\mathbb{Z}^n)=L(d\mathbb{Z}^n)$ for some $d\geq 1$.

We claim that $Q=L(d\mathbb{Z}^n)$. 

If $d=1$, then $L(\mathbb{Z}^n)\subseteq Q$ and thus $E(v)=v$ for all $v\in\mathbb{Z}^n$. Hence  $E(vs)=vE(s)=0$. Thus, $Q=L(\mathbb{Z}^n)$.

From now on, we assume that $d\geq 2$.
The proof given below is essentially the same as the proof of Case 1 with minor modification, we record it for completeness.

First, for any $v\in\mathbb{Z}^n\setminus {d\mathbb{Z}^n}$, we observe that $E(v)=0$.

Indeed, 
\begin{align*}
E(v)\in Q\cap L(\mathbb{Z}^n)'=Q\cap L(\mathbb{Z}^n)=L(d\mathbb{Z}^n).
\end{align*}
Thus, from $\langle v-E(v), E(v)\rangle=0$, we deduce that $\langle E(v), E(v) \rangle=0$, i.e. $E(v)=0$.

Thus, for any $v\in \mathbb{Z}^n$, we have either $E(v)=0$ or $v\in d\mathbb{Z}^n$ and in this case $E(v)=v$.

We are left to show $E(vs)=0$ for all $v\in\mathbb{Z}^n$. It suffices to show $E(e_1s)=0$ by Lemma \ref{3.6}.

First, notice that
\begin{align*}
E(e_1s)\in L(\{\left(\begin{smallmatrix}
1&\boldsymbol{z}\\
0&I_{n-1}
\end{smallmatrix}\right) \})'\cap L(\mathbb{Z}^n\rtimes \{\pm I_{n}\})\subseteq L((\mathbb{Z},0,\dots, 0)^t)\rtimes \{\pm I_{n}\}.
\end{align*}
where $\boldsymbol{z}:=(\mathbb{Z},\mathbb{Z},\dots,\mathbb{Z})_{1\times n-1}$.
Thus, we may write $E(e_1s)=a+bs$, where $a, b\in L((\mathbb{Z},0,\dots, 0)^t)$.

From $\langle v-E(v), E(e_1s)\rangle=0$, we deduce that $\langle v, a \rangle=0$ for all $v\in \mathbb{Z}^n\setminus {d\mathbb{Z}^n}$. Hence, $a\in L((d\mathbb{Z},0,\dots,0)^t)\subset L(d\mathbb{Z}^n)\subseteq Q$. Similarly, from $\langle e_1s-E(e_1s), a \rangle=0$, we deduce that $\langle a, a \rangle=0$, i.e. $a=0$.
Hence, $E(e_1s)=bs$.

Then by repeating the last part of the proof of Case 1, we deduce that $b=0$.

\textbf{Case 3.} $Q\cap L(\mathbb{Z}^n)=A_d$ for some $d\geq 1$.

We claim that $Q=A_d$. 

First, fix any nonzero vector $v\in d\mathbb{Z}^n$, we have $E(v)\in Q\cap L(\mathbb{Z}^n)'=Q\cap L(\mathbb{Z}^n)=A_d$. Hence, we can write $E(v)=\sum\limits_{\omega\in d\mathbb{Z}^n}\lambda_{\omega}\omega$ with symmetric Fourier coefficients, i.e. $\lambda_{\omega}=\lambda_{\omega^{-1}}$ for all $\omega\in d\mathbb{Z}^n$. Note that here we use $\omega^{-1}$ to mean the inverse of $\omega$ in $d\mathbb{Z}^n$.

From $0=\langle v-E(v), Q \rangle$, we deduce that $\langle v-\sum\limits_{\omega\in d\mathbb{Z}^n}\lambda_{\omega}\omega, \omega_0+\omega_{0}^{-1} \rangle=0$ for all $\omega_0\in d\mathbb{Z}^n$. Observe that by taking $\omega_0\not\in\{v, v^{-1}\}$, we can get that $0=\lambda_{\omega_0}+\lambda_{\omega_0^{-1}}=2\lambda_{\omega_0}.$ Similarly, by taking $\omega_0=v$, we deduce that $\lambda_{v}=\lambda_{v^{-1}}=\frac{1}{2}$. Hence $E(v)=\frac{v+v^{-1}}{2}$ for all $v\in d\mathbb{Z}^n$.

Second, we check that $E(v)=0$ for all $v\in\mathbb{Z}^n\setminus {d\mathbb{Z}^n}$.

Observe that we still have $E(v)\in Q\cap L(\mathbb{Z}^n)'=Q\cap L(\mathbb{Z}^n)=A_d$. Hence, from the fact that $0=\langle v-E(v), E(v) \rangle$, we deduce that $\langle E(v), E(v)\rangle=0$ for all $v\in \mathbb{Z}^n\setminus {d\mathbb{Z}^n}$ since $\langle v, E(v) \rangle=0$ for such a $v$, thus $E(v)=0$ is proved.

We are left to show $E(e_1s)=0$ by Lemma \ref{3.6}. 

Once again, we still have that for all $z\in M_{1, n-1}(\mathbb{Z})$,
\begin{align*}
E(e_1s)\in L(\{\left(\begin{smallmatrix}
1&\boldsymbol{z}\\
0&I_{n-1}
\end{smallmatrix}\right) \})'\cap L(\mathbb{Z}^n\rtimes \{\pm I_{n}\})\subseteq L((\mathbb{Z},0,\dots, 0)^t)\rtimes \{\pm I_{n}\}.
\end{align*}
Thus, we may write $E(e_1s)=a+bs$, where $a, b\in L((\mathbb{Z},0,\dots, 0)^t)$.

From $\langle v-E(v), E(e_1s)\rangle=0$, we deduce that $\langle v, a \rangle=0$ for all $v\in \mathbb{Z}^n\setminus {d\mathbb{Z}^n}$. Hence, $a\in L((d\mathbb{Z},0,\dots,0)^t)$.

Next, from $\langle e_1s-E(e_1s), A_d \rangle=0$, we deduce that $\langle a, A_d \rangle=0$, equivalently, $a+\sigma_s(a)=0$. In other words, if we write $a=\sum_{i\in\mathbb{Z}}\lambda_ie_{di}$, where $e_{di}=(di,0,\ldots, 0)^t$, then 
\begin{align}\label{eq: skew-symmetry on the coefficients of a}
\lambda_i+\lambda_{-i}=0,\forall~i\in\mathbb{Z}. 
\end{align}

Now, let us write $a=\sum_{i\in\mathbb{Z}}\lambda_ie_{di}$ and $b=\sum_{j\in\mathbb{Z}}\mu_je_j$, where $\lambda_i,\mu_j\in \mathbb{C}$ for all $i, j$.

Thus, $E(f_1s)=E(ge_{1}g^{-1}s) = g(a+bs)g^{-1}=(\sum_{i\in\mathbb{Z}}\lambda_if_{di})+(\sum_{j\in\mathbb{Z}}\mu_jf_j)s$, where $g = \left(\begin{smallmatrix}
 0 & -1 \\
1 & 0 \\
     & & I_{n-2}
\end{smallmatrix}\right)\in SL_{n}(\mathbb{Z})$ and $f_j=(0,j,0,\ldots, 0)^t\in\mathbb{Z}^n$.

Next, note that
\begin{align*}
E((e_1+e_{-1})s)&=E(e_1s)+E(e_{-1}s)=E(e_1s)+\sigma_s(E(e_1s))\\
&=(a+bs)+\sigma_s(a+bs)
=(a+\sigma_s(a))+(b+\sigma_s(b))s=(b+\sigma_s(b))s.
\end{align*}
Meanwhile, since $b+\sigma_s(b)\in A_d\subseteq Q$, we deduce that $(b+\sigma_s(b))s=E((e_1+e_{-1})s)=E(E((e_1+e_{-1})s))=E((b+\sigma_s(b))s)=(b+\sigma_s(b))E(s)=0$, i.e. $b+\sigma_s(b)=0$, equivalently, 
\begin{align}\label{eq: skew-symmetry on coefficients of b}
\mu_k+\mu_{-k}=0, \forall ~k\in \mathbb{Z}.
\end{align}

Next, we compute both sides of the identity $E(e_1s)E(f_1s)=E(e_1sE(f_1s))$ concretely.

On the one hand, since
$E(e_1s)=a+bs=\sum_{i\in\mathbb{Z}}\lambda_ie_{di}+\sum_{j\in\mathbb{Z}}\mu_je_js$, we get that
\begin{align*}
E(e_1s)E(f_1s)&=\left[\sum_{i, j}\lambda_i\lambda_j\left(\begin{smallmatrix}
di\\
dj\\
0\\
\vdots\\
0
\end{smallmatrix}\right)+\sum_{j, k}\mu_j\mu_k\left(\begin{smallmatrix}
j\\
-k\\
0\\
\vdots\\
0
\end{smallmatrix}\right)\right]\\
&+\left[\sum_{i,j}\lambda_i\mu_j\left(\begin{smallmatrix}
di\\
j\\
0\\
\vdots\\
0
\end{smallmatrix}\right)+\sum_{i, j}\lambda_i\mu_j\left(\begin{smallmatrix}
j\\
-di\\
0\\
\vdots\\
0
\end{smallmatrix}\right)\right]s;
\end{align*}
on the other hand, we have
\begin{align*}
E(e_1sE(f_1s))&=E(e_{1}s\left[(\sum_{i\in\mathbb{Z}}\lambda_if_{di})+(\sum_{j\in\mathbb{Z}}\mu_jf_j)s\right])\\
&=\sum_i\lambda_iE(\left(\begin{smallmatrix}
1\\
-di\\
0\\
\vdots\\
0
\end{smallmatrix}\right)s)+\sum_j\mu_jE(\left(\begin{smallmatrix}
1\\
-j\\
0\\
\vdots\\
0
\end{smallmatrix}\right))\\
&=\sum_i\lambda_i g_{i}E(e_1s)g_{i}^{-1}
+\sum_j\mu_jE(\left(\begin{smallmatrix}
1\\
-j\\
0\\
\vdots\\
0
\end{smallmatrix}\right))\\
&=\sum\limits_{i} \lambda_{i} \left[\sum\limits_{j} \lambda_{j}g_{i}e_{dj}g_{i}^{-1} + \sum\limits_{k} \mu_{k}g_{i}e_{k}g_{i}^{-1}s\right] + \sum_j\mu_jE(\left(\begin{smallmatrix}
1\\
-j\\
0\\
\vdots\\
0
\end{smallmatrix}\right))\\
&=\sum_{i,j}\lambda_i\lambda_j\left(\begin{smallmatrix}
dj\\
-d^2ij\\
0\\
\vdots\\
0
\end{smallmatrix}\right)+\sum_{i, k}\lambda_i\mu_k\left(\begin{smallmatrix}
k\\
-dik\\
0\\
\vdots\\
0
\end{smallmatrix}\right)s+\sum_j\mu_jE(\left(\begin{smallmatrix}
1\\
-j\\
0\\
\vdots\\
0
\end{smallmatrix}\right)),
\end{align*}
where $g_{i} = \left(\begin{smallmatrix}
 1 & 0 \\
-di & 1 \\
     & & I_{n-2}
\end{smallmatrix}\right) \in SL_{n}(\mathbb{Z})$.

Due to the difference in computing $E((1,-j,0,\ldots, 0)^t))$ when $d=1$ or $d\ge2$, we need to split the proof by considering two subcases.

\textbf{Subcase 3-I}: $d=1$.

In this case, since we have proved that $E(v)=\frac{v+v^{-1}}{2}$ for all $v\in d\mathbb{Z}^n=\mathbb{Z}^n$, we get that $$E((1,-j,0,\ldots,0)^t)=\frac{1}{2}\left[(1,-j,0,\ldots,0)^t+(-1,j,0,\ldots,0)^t)\right].$$ 
Then we may continue the above calculation to deduce that 
\begin{align*}
E(e_1sE(f_1s))&=\sum_{i,j}\lambda_i\lambda_j\left(\begin{smallmatrix}
j\\
-ij\\
0\\
\vdots\\
0
\end{smallmatrix}\right)+\sum_{i, k}\lambda_i\mu_k\left(\begin{smallmatrix}
k\\
-ik\\
0\\
\vdots\\
0
\end{smallmatrix}\right)s+\sum_j\mu_j\frac{1}{2}\left[\left(\begin{smallmatrix}
1\\
-j\\
0\\
\vdots\\
0
\end{smallmatrix}\right)+\left(\begin{smallmatrix}
-1\\
j\\
0\\
\vdots\\
0
\end{smallmatrix}\right)\right],\\
E(e_1s)E(f_1s)&=\left[\sum_{i, j}\lambda_i\lambda_j\left(\begin{smallmatrix}
i\\
j\\
0\\
\vdots\\
0
\end{smallmatrix}\right)+\sum_{j, k}\mu_j\mu_k\left(\begin{smallmatrix}
j\\
-k\\
0\\
\vdots\\
0
\end{smallmatrix}\right)\right]\\
&+\left[\sum_{i,j}\lambda_i\mu_j\left(\begin{smallmatrix}
i\\
j\\
0\\
\vdots\\
0
\end{smallmatrix}\right)+\sum_{i, j}\lambda_i\mu_j\left(\begin{smallmatrix}
j\\
-i\\
0\\
\vdots\\
0
\end{smallmatrix}\right)\right]s.
\end{align*}
Therefore, we can deduce that
\begin{align}
\label{eq-a: d=1 case}\sum_{i,j}\lambda_i\lambda_j\left(\begin{smallmatrix}
j\\
-ij\\
0\\
\vdots\\
0
\end{smallmatrix}\right)+\sum_j\mu_j\frac{1}{2}\left[\left(\begin{smallmatrix}
1\\
-j\\
0\\
\vdots\\
0
\end{smallmatrix}\right)+\left(\begin{smallmatrix}
-1\\
j\\
0\\
\vdots\\
0
\end{smallmatrix}\right)\right]=\sum_{i, j}\lambda_i\lambda_j\left(\begin{smallmatrix}
i\\
j\\
0\\
\vdots\\
0
\end{smallmatrix}\right)+\sum_{j, k}\mu_j\mu_k\left(\begin{smallmatrix}
j\\
-k\\
0\\
\vdots\\
0
\end{smallmatrix}\right),\\
\label{eq-b: d=1 case}\sum_{i, k}\lambda_i\mu_k\left(\begin{smallmatrix}
k\\
-ik\\
0\\
\vdots\\
0
\end{smallmatrix}\right)=\sum_{i,j}\lambda_i\mu_j\left(\begin{smallmatrix}
i\\
j\\
0\\
\vdots\\
0
\end{smallmatrix}\right)+\sum_{i, j}\lambda_i\mu_j\left(\begin{smallmatrix}
j\\
-i\\
0\\
\vdots\\
0
\end{smallmatrix}\right).
\end{align}

Now we can compare the coefficients of $(1,j,0,\ldots,0)^t$ and $(-1,j,0,\ldots,0)^t$ respectively on both sides of  \eqref{eq-a: d=1 case} to deduce that
\begin{align*}
\lambda_1\lambda_{-j}+\frac{1}{2}\mu_{-j}&=\lambda_1\lambda_j+\mu_1\mu_{-j},\\
\lambda_j\lambda_{-1}+\frac{1}{2}\mu_j&=\lambda_{-1}\lambda_j+\mu_{-1}\mu_{-j}.
\end{align*}
Taking the sum of the above two equations and applying \eqref{eq: skew-symmetry on the coefficients of a} and \eqref{eq: skew-symmetry on coefficients of b}, we get that $2\lambda_1\lambda_{-j}=0$ for all $j\in\mathbb{Z}$. Hence $\lambda_1=0$. Then plugging it in the above first equation to get that $\frac{1}{2}\mu_{-j}=\mu_1\mu_{-j}$. Thus either $\mu_1=\frac{1}{2}$ or $\mu_{-j}=0$ for all $j\in \mathbb{Z}$. 

Once we have $\mu_j=0$ for all $j\in\mathbb{Z}$, i.e. $b=0$, then $a=E(e_1s)=E(E(e_1s))=E(a)\in Q\cap L(\mathbb{Z}^n)'=Q\cap L(\mathbb{Z}^n)=A_1$. Recall that \eqref{eq: skew-symmetry on the coefficients of a} holds, i.e. $\langle a, A_1 \rangle=0$. Hence $a=0$; equivalently, $E(e_1s)=0$.
Therefore, we may without loss of generality assume that $\mu_1=\frac{1}{2}$ and try to deduce a contradiction.

We compute the coefficient of $(-1,j,0,\ldots,0)^t$ on both sides of \eqref{eq-b: d=1 case} to get that
$\lambda_j\mu_{-1}=\lambda_{-1}\mu_j+\lambda_{-j}\mu_{-1}$. By plugging $\lambda_{-1}=-\lambda_1=0$, $\mu_{-1}=-\mu_1=-\frac{1}{2}$ in it and applying \eqref{eq: skew-symmetry on the coefficients of a}, we get $\lambda_j=0$ for all $j\in \mathbb{Z}$, i.e. $a=0$. 

Then for any $|i|\neq 0, 1$, we compute the coefficients of $(i,j,0,\ldots, 0)^t$ on both sides of
\eqref{eq-a: d=1 case} and use $\lambda_j=0$ for all $j\in\mathbb{Z}$ to get that
$0=\lambda_i\lambda_j+\mu_i\mu_{-j}=\mu_i\mu_{-j}$ for all $j\in\mathbb{Z}$. Therefore, $\mu_i=0$ for all $|i|\neq 0, 1$. Recall that $\mu_0+\mu_{-0}=0$, i.e. $\mu_0=0$, hence, we have shown that
$E(e_1s)=0+\frac{1}{2}(e_{1}-e_{-1})s$.

Then, $E(e_{3}s)=e_{1}E(e_1s)e_{-1}=\frac{1}{2}(e_{3}-e_{1})s$.
Hence, 
\begin{align*}
Q\ni E(e_{3}s)E(e_1s)&=\frac{1}{4}(e_{3}-e_{1})s(e_{1}-e_{-1})s\\
&=\frac{1}{4}(e_{3}-e_{1})(e_{-1}-e_{1})\\
&=\frac{1}{4}(e_{2}-e_{0}-e_{4}+e_{2})\in L(\mathbb{Z}^n)\setminus A_1.
\end{align*}
This contradicts to the assumption that $Q\cap L(\mathbb{Z}^n)=A_1$.

\textbf{Subcase 3-II}: $d\geq 2$.

In this case, $(1,-j,0,\ldots,0)^t\not\in d\mathbb{Z}^n$ and hence $ E((1,-j,0,\ldots,0)^t)=0$ since we have proved before that $E(v)=0$ for all $v\in \mathbb{Z}^n\setminus {d\mathbb{Z}^n}$. Thus we get the following identities by comparing the computation of $E(e_1s)E(f_1s)$ and $E(e_1sE(f_1s))$:
\begin{align}
\label{eq1: two sides, n>1} \sum_{i, j}\lambda_i\lambda_j\left(\begin{smallmatrix}
di\\
dj\\
0\\
\vdots\\
0
\end{smallmatrix}\right)+\sum_{j, k}\mu_j\mu_k\left(\begin{smallmatrix}
j\\
-k\\
0\\
\vdots\\
0
\end{smallmatrix}\right)=\sum_{i,j}\lambda_i\lambda_j\left(\begin{smallmatrix}
dj\\
-d^2ij\\
0\\
\vdots\\
0
\end{smallmatrix}\right),\\
\label{eq2: two sides, n>1}\sum_{i,j}\lambda_i\mu_j\left(\begin{smallmatrix}
di\\
j\\
0\\
\vdots\\
0
\end{smallmatrix}\right)+\sum_{i, j}\lambda_i\mu_j\left(\begin{smallmatrix}
j\\
-di\\
0\\
\vdots\\
0
\end{smallmatrix}\right)=\sum_{i, k}\lambda_i\mu_k\left(\begin{smallmatrix}
k\\
-dik\\
0\\
\vdots\\
0
\end{smallmatrix}\right).
\end{align}

By \eqref{eq1: two sides, n>1}, we deduce that $\mu_j\mu_k=0$ for all $(j,-k)^t\not\in d\mathbb{Z}^2$. In particular, $\mu_j=0$ for all $j\not\in d\mathbb{Z}$, hence, $b\in L((d\mathbb{Z}, 0,\dots,0)^t)$.

For any $i\neq 0$ and $j\in\mathbb{Z}$, by comparing the coefficients of $(di,dj,0,\ldots,0)^t$ on both sides of the two identities \eqref{eq1: two sides, n>1} and \eqref{eq2: two sides, n>1}, we deduce that 
\begin{align}
\label{eq11: simplied eq1} \lambda_i\lambda_j+\mu_{di}\mu_{-dj}&=\lambda_{-\frac{j}{di}}\lambda_i, \forall~ i\neq 0, \forall ~j\\
\label{eq22: simplied eq2} \lambda_i\mu_{dj}+\lambda_{-j}\mu_{di}&=\lambda_{-\frac{j}{di}}\mu_{di}, \forall~i\neq 0,\forall~j.
\end{align}
Here, $\lambda_{-\frac{j}{di}}$ is understood as 0 if $(di)\nmid j$.

Substitute $j=1$ into \eqref{eq11: simplied eq1} and 
\eqref{eq22: simplied eq2} and use \eqref{eq: skew-symmetry on coefficients of b} to deduce that 
\begin{align}
\label{eq3: j=1 case} \lambda_i\lambda_1=\mu_{di}\mu_{d}, \forall~ i\neq 0,\\
\label{eq4: j=1 case} \lambda_i\mu_d=\lambda_1\mu_{di},\forall~i\neq 0.
\end{align}
This implies that $(\lambda_i^2-\mu_{di}^2)\lambda_1\mu_d=0$ and $\lambda_1^2=\mu_d^2$ (by plugging $i=1$ in \eqref{eq3: j=1 case}). 

By plugging $j=dik$ in \eqref{eq11: simplied eq1} and \eqref{eq22: simplied eq2} and using \eqref{eq: skew-symmetry on the coefficients of a}, we get that
\begin{align}
\label{eq5} \lambda_i\lambda_{dik}-\mu_{di}\mu_{d^2ik}=\lambda_{-k}\lambda_i, \forall ~i\neq 0, \forall~ k, \\
\label{eq6} \lambda_i\mu_{d^2ik}-\lambda_{dik}\mu_{di}=\lambda_{-k}\mu_{di},\forall~i\neq 0, \forall~k.
\end{align}

\iffalse
\textbf{Claim 2:} $\lambda_1\mu_d=0$; equivalently, $\lambda_1=0=\mu_d$ (since $\lambda_1^2=\mu_d^2$). 

Indeed, assume not, i.e. $\lambda_1=\pm \mu_d\neq 0$. We can deduce from \eqref{eq3: j=1 case} that $\lambda_i=\pm \mu_{di}$. In both cases, we can deduce from \eqref{eq5} that $\lambda_{-k}\lambda_i=0$ for all $i\neq 0$ and $k\in\mathbb{Z}$. In particular, $\lambda_i=0$ for all $i\neq 0$, contradicting to the assumption that $\lambda_1\neq 0$.
\fi

\textbf{Claim.} $\lambda_i=\mu_{di}=0$ for all $i\neq 0$.

First, let us check that $\lambda_i-\mu_{di}=0$ for all $i\neq 0$. Assume this does not hold, then for some $i\neq 0$, we have $\lambda_i-\mu_{di}\neq 0$. By setting $j=i$ in \eqref{eq11: simplied eq1}, we conclude that $\lambda_i^2-\mu_{di}^2=0$. Hence, we have $\lambda_i+\mu_{di}=0$ and thus $\lambda_i=-\mu_{di}$. Then using this relation, we may deduce from \eqref{eq5} and \eqref{eq6} that
$\lambda_i(\lambda_{dik}+\mu_{d^2ik})=\lambda_{-k}\lambda_i$ and $\lambda_i(\lambda_{dik}+\mu_{d^2ik})=-\lambda_{-k}\lambda_i$. Thus
$0=\lambda_{-k}\lambda_i$ for all $k\in\mathbb{Z}$, thus $\lambda_i=0$ and $\mu_{di}=-\lambda_i=0$, contradicting to our assumption that $\lambda_i-\mu_{di}\neq 0$. Hence, we have proved that $\lambda_i=\mu_{di}$ for all $i\neq 0$. Then, it follows from \eqref{eq5} that $\lambda_{-k}\lambda_i=0$ for all $i\neq 0$, thus $\lambda_i=0$ for all $i\neq 0$. This finishes the proof of this claim.

Based on Claim 2 and the fact that $\mu_j=0$ for all $j\not\in d\mathbb{Z}$, we deduce that $a, b\in\mathbb{C}$, then by taking trace on $E(e_1s)=a+bs$, we get $a=0$. By taking $E$ on $E(e_1s)=bs$, we get $bs=E(bs)=bE(s)=0$, i.e. $b=0$, and hence $E(e_1s)=0$. 

This finishes the proof of Lemma \ref{3.5}.
\end{proof}
\begin{remark}\label{remark: abelian assumption is used to show E(s)=0}
Note that in the proof of Lemma \ref{3.5}, the abelian assumption on $Q$ is only needed in Step 1, i.e. proving $E(s)=0$. Once we have $E(s)=0$, then the subsequent Step 2, i.e. the proof of Case 1-3 no longer needs the abelian assumption. This observation would be needed for proving Proposition \ref{prop: version of prop 3.2}. 
\end{remark}

%%%%%%%%%%%%%%%%%%%%%%%%%%%%%%%%%%%%%%%%%%%%%%

Let us classify all invariant von Neumann subalgebras in $L(G_n)$ for even $n$.

\begin{prop}\label{prop: version of prop 3.2}
Let $G_{n}=\mathbb{Z}^n\rtimes SL_n(\mathbb{Z}),n\ge 2$ and n is even. Let $P$ be a $G_{n}$-invariant von Neumann subalgebra in $(L(G_{n}),\tau)$. Then 
\begin{itemize}
\item either $P=L(H)$ for some normal subgroup $H\lhd G_{n}$; or,
\item $P=A_d$ for some $d\geq 1$, where $A_d\subset L(d\mathbb{Z}^n)$ is defined by $A_d=\{x\in L(d\mathbb{Z}^n): \tau(xs)=\tau(xs^{-1}), \forall s\in d\mathbb{Z}^n\}$.
\end{itemize}
\end{prop}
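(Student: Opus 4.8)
The plan is to classify $P$ through its center $\mathcal{Z}(P)$, mirroring the odd case but routing the non-abelian part through Lemma~\ref{3.5} and Lemma~\ref{3.7}. As $\mathcal{Z}(P)$ is abelian (hence amenable) and $G_n$-invariant, \cite[Theorem A]{aho} and Lemma~\ref{3.3} give $\mathcal{Z}(P)\subseteq L(\mathrm{Rad}(G_n))=L(\mathbb{Z}^n\rtimes\{\pm I_n\})$, and abelianness makes the inclusion proper; thus Lemma~\ref{3.5} applies to $Q=\mathcal{Z}(P)$ and yields $\mathcal{Z}(P)\in\{\mathbb{C},L(d\mathbb{Z}^n),A_d\}$. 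When $\mathcal{Z}(P)=\mathbb{C}$, $P$ is a subfactor and Proposition~\ref{prop: classify_invariant_subfactors} gives $P=L(N)$. When $\mathcal{Z}(P)=L(d\mathbb{Z}^n)$, the Fourier-coefficient computation of Case~1 in Proposition~\ref{prop: version of prop 3.1} (which does not use the parity of $n$) shows $L(d\mathbb{Z}^n)'\cap L(G_n)=L(\mathbb{Z}^n)$, so $P\subseteq\mathcal{Z}(P)'\cap L(G_n)=L(\mathbb{Z}^n)$ is abelian and $P=\mathcal{Z}(P)=L(d\mathbb{Z}^n)=L(H)$ with $H=d\mathbb{Z}^n\lhd G_n$.

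The substantive case is $\mathcal{Z}(P)=A_d$. By Lemma~\ref{3.7}, $P\subseteq A_d'\cap L(G_n)\subseteq L(\mathbb{Z}^n\rtimes\{\pm I_n\})$, so we may form the $\tau$-preserving conditional expectation $E\colon L(\mathbb{Z}^n\rtimes\{\pm I_n\})\to P$ and compute $E(s)$ for $s=-I_n$. Running the first half of Step~1 of Lemma~\ref{3.5}: centrality of $s$ in $SL_n(\mathbb{Z})$ and $G_n$-invariance of $P$ give $g E(s) g^{-1}=E(s)$ for all $g\in SL_n(\mathbb{Z})$, whence $E(s)\in L(SL_n(\mathbb{Z}))'\cap L(\mathbb{Z}^n\rtimes\{\pm I_n\})=L(\{\pm I_n\})$; writing $E(s)=\lambda+\mu s$, the trace forces $\lambda=0$ and the $P$-bimodule property forces $\mu^2=\mu$, so $E(s)=0$ or $E(s)=s$. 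This is exactly where abelianness was used in Lemma~\ref{3.5}, and Remark~\ref{remark: abelian assumption is used to show E(s)=0} tells us that, once $E(s)=0$ is known, Step~2 of Lemma~\ref{3.5} applies verbatim to $P$ and gives $P\cap L(\mathbb{Z}^n)=P$. Hence in the subcase $E(s)=0$ we conclude $P\subseteq L(\mathbb{Z}^n)$, so $P$ is abelian and $P=\mathcal{Z}(P)=A_d$.

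It remains to handle $E(s)=s$, i.e. $s\in P$, which I expect to be the main obstacle. Here I would show $P$ has crossed-product form. For $x=x_0+x_1 s\in P$ with $x_0,x_1\in L(\mathbb{Z}^n)$, conjugation by $u_{(a,e)}$ ($a\in\mathbb{Z}^n$) sends $x$ to $x_0+u_{(2a,e)}x_1 s$, and these elements all lie in $P$ by $G_n$-invariance; averaging over a F\o lner sequence for $\mathbb{Z}^n$ converges in $\|\cdot\|_2$ to $x_0$, so $x_0\in P$, and then $x_1 s=x-x_0\in P$ gives $x_1\in P\cap L(\mathbb{Z}^n)$ as well (using $s\in P$). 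Thus, with $B:=P\cap L(\mathbb{Z}^n)$, we get $P=B+Bs$. Now $B$ is $SL_n(\mathbb{Z})$-invariant and abelian, so as in Step~2 of Lemma~\ref{3.5} it is one of $\mathbb{C}$, $L(d'\mathbb{Z}^n)$, $A_{d'}$; a short check shows that $B=\mathbb{C}$ and $B=A_{d'}$ both make $B+Bs$ abelian, forcing the contradiction $s\in P=\mathcal{Z}(P)=A_d\subseteq L(\mathbb{Z}^n)$. Hence $B=L(d'\mathbb{Z}^n)$ and $P=L(d'\mathbb{Z}^n\rtimes\{\pm I_n\})=L(H)$; moreover $s\in P$ together with $G_n$-invariance forces $u_{(2a,e)}\in B$ for all $a$, so $d'\in\{1,2\}$ and $H=d'\mathbb{Z}^n\rtimes\{\pm I_n\}\lhd G_n$, while matching $\mathcal{Z}(P)=A_{d'}$ against $A_d$ gives $d'=d$. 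The crux is precisely this propagation step: turning $s\in P$ into the full description $P=(P\cap L(\mathbb{Z}^n))\rtimes\{\pm I_n\}$ via $G_n$-invariance and the amenability of $\mathbb{Z}^n$, and using the center to pin down $B$.
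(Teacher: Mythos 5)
Your proof is correct, and while it follows the paper's overall architecture (classify $\mathcal{Z}(P)$ via \cite[Theorem A]{aho}, Lemma~\ref{3.3} and Lemma~\ref{3.5}, handle $\mathcal{Z}(P)=\mathbb{C}$ by Proposition~\ref{prop: classify_invariant_subfactors} and $\mathcal{Z}(P)=L(d\mathbb{Z}^n)$ exactly as in Case~1 of Proposition~\ref{prop: version of prop 3.1}), it resolves the hard case $\mathcal{Z}(P)=A_d$ by a genuinely different mechanism. The paper splits on $P\cap L(\mathbb{Z}^n)\in\{L(d_1\mathbb{Z}^n),A_{d_1}\}$ and, in the subcases with $E(s)=s$, determines $P$ by computing $E(v)$ and $E(vs)$ term by term through orthogonality relations and the reduction Lemma~\ref{3.6}, finally excluding the possibility $P=A_{d_1}\rtimes\{\pm I_n\}$ by playing it against $L(2\mathbb{Z}^n\rtimes\{\pm I_n\})\subseteq P$. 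You instead split on $E(s)$ first: for $E(s)=0$ you invoke Remark~\ref{remark: abelian assumption is used to show E(s)=0} plus Step~2 of Lemma~\ref{3.5} wholesale to get $P\subseteq L(\mathbb{Z}^n)$, hence $P=\mathcal{Z}(P)=A_d$; for $E(s)=s$ your F\o lner-averaging argument shows directly that $P=B+Bs$ with $B=P\cap L(\mathbb{Z}^n)$, after which the classification \cite[Example 5.9]{wit} of $B$ and the observation that $B+Bs$ is abelian whenever $B\in\{\mathbb{C},A_{d'}\}$ (since $\sigma_s$ fixes $A_{d'}$ pointwise) finish the case cleanly, the crossed-product case $B=L(d'\mathbb{Z}^n)$ being pinned down by $u_{2a}\in B$ and the center computation just as in the paper. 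Your route avoids Lemma~\ref{3.6} and the long Fourier computations entirely in this proposition, and replaces the paper's ad hoc exclusion of $A_{d_1}\rtimes\{\pm I_n\}$ with the structural point that this algebra is abelian, contradicting $s\in P\not\subseteq L(\mathbb{Z}^n)$; the price is a new analytic input (amenability of $\mathbb{Z}^n$ and $\|\cdot\|_2$-closedness of the unit ball of $P$), whereas the paper only reuses machinery already needed for Lemma~\ref{3.5}. One step you should spell out: the averaged $s$-part tends to $0$ in $\|\cdot\|_2$ because the unitary representation $a\mapsto u_{2a}$ of $\mathbb{Z}^n$ on $L^2(\mathbb{T}^n)$ has no nonzero invariant vectors (immediate from Fourier coefficients), so the mean ergodic theorem applies; combined with the uniform operator-norm bound $\|y_N\|\le\|x\|$ on the averages, this legitimately places $x_0$ in $P$.
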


\begin{proof}
By the result of \cite[Theorem A]{aho} and Lemma \ref{3.3}, we know that the center $\mathcal{Z}(P) \subset L(\mathbb{Z}^n \rtimes \{\pm I_{n}\})$. Applying Lemma \ref{3.5} to $Q=\mathcal{Z}(P)$, we may split the proof by considering three cases.

\textbf{Case 1.} $\mathcal{Z}(P) = \mathbb{C}$, i.e. $P$ is a $G_{n}$-invariant subfactor.

Then $P=L(N)$ for some normals subgroup $N$ of $G$ by Proposition \ref{prop: classify_invariant_subfactors}.

\textbf{Case 2.} $\mathcal{Z}(P) = L(d\mathbb{Z}^n)$ for some $d\ge1$.

\textbf{Claim.} $P = L(d\mathbb{Z}^n)$.

Indeed, we just need to check that 
\begin{equation*}
    P\subseteq \mathcal{Z}(P)' \cap L(G_{n}) \subseteq L(\mathbb{Z}^n).
\end{equation*}

The proof is identical to that of proving Case 1 in Proposition \ref{prop: version of prop 3.1} for odd $n$.

\textbf{Case 3.} $\mathcal{Z}(P) = A_{d}$ for some $d\ge1$.

By Lemma \ref{3.7}, we obtain $P\subseteq A_{d}' \cap L(G_{n}) \subset L(\mathbb{Z}^n \rtimes \{\pm I_{n}\})$.

\textbf{Claim.}
 $P \in \{A_{d} ~(d\ge1), L(2\mathbb{Z}^n \rtimes \{\pm I_{n}\}),L(\mathbb{Z}^n \rtimes \{\pm I_{n}\})\}$.
\begin{proof}
 Let  $E: L(\mathbb{Z}^n\rtimes \{\pm{I_{n}}\})\rightarrow P$ be the trace $\tau$-preserving conditional expectation onto $P$. Below, we directly write $g$ for the canonical unitary $u_g$ inside $L(G_{n})$ for simplicity and we use the notation $\langle a, b \rangle$ to mean $\tau(b^*a)$  for any $a, b\in L(G_{n})$. And we denote $s=-I_{n}\in SL_{n}(\mathbb{Z})$.

Note that $L(SL_n(\mathbb{Z}))'\cap L(\mathbb{Z}^n\rtimes SL_n(\mathbb{Z}))=L(\{\pm I_{n}\})$, say by a direct calculation using \cite[Lemma 2.7]{aj}.

Since $P$ is $G_{n}$-invariant, we deduce that $gE(s)g^{-1}=E(gsg^{-1})=E(s)$ for all $g\in SL_n(\mathbb{Z})$. Hence, 
$E(s)\in L(SL_n(\mathbb{Z}))'\cap L(\mathbb{Z}^n\rtimes SL_n(\mathbb{Z}))=L(\{\pm I_{n}\})$. Therefore, we may write $E(s)=\lambda+\mu s$, where $\lambda,\mu\in\mathbb{C}$. Then we get that $E(s)=0$ or $s$ via exactly the same computation as in the proof of Lemma \ref{3.5}.

Since $P\cap L(\mathbb{Z}^n)$ is $SL_n(\mathbb{Z})$-invariant, we may apply \cite[Example 5.9]{wit} to deduce that $P\cap L(\mathbb{Z}^n)=\mathbb{C}$, $L(d_{1}\mathbb{Z}^n)$ or $A_{d_1}$ for some $d_{1}\geq 1$. Note that $P\cap L(\mathbb{Z}^n)=\mathbb{C}$ is impossible since  $\mathbb{C}\neq A_d=\mathcal{Z}(P)\subseteq P\cap L(\mathbb{Z}^n)$. Hence we only need to consider the other two possibilities.

\textbf{Case 3-I}: $P\cap L(\mathbb{Z}^n)=L(d_{1}\mathbb{Z}^n)$, for some $d_1\ge 1$.

We need to further distinguish between the situations $E(s)=s$ and $E(s)=0$, which correspond to whether $s\in P$ or not.

\textbf{Subcase 1: $E(s) = 0$.}

If $d_{1}=1$, then $L(\mathbb{Z}^n)\subseteq P$ and thus $E(v)=v$ for all $v\in\mathbb{Z}^n$. Hence  $E(vs)=vE(s)=0$. Thus, $P=L(\mathbb{Z}^n)$. Hence $P$ is abelian, which  contradicts $\mathcal{Z}(P) = A_{d}$.

Assume that $d_{1}\geq 2$. In view of Remark \ref{remark: abelian assumption is used to show E(s)=0}, we may do a calculation similar to that in Case 2 of Lemma \ref{3.5}  to obtain $P = L(d_{1}\mathbb{Z}^n)$. Hence $P$ is abelian, which  contradicts $\mathcal{Z}(P) = A_{d}$.

\textbf{Subcase 2:} $E(s) = s$, i.e. $s\in P$.

If $d_{1}=1$, we have $L(\mathbb{Z}^n)\subseteq P$ and $s\in P$, which implies that $L(\mathbb{Z}^n) \rtimes \{\pm I_{n}\} \subseteq P$. Since $P\subseteq L(\mathbb{Z}^n) \rtimes \{\pm I_{n}\}$ by hypothesis, we have $P = L(\mathbb{Z}^n) \rtimes \{\pm I_{n}\}$ and consequently $\mathcal{Z}(P) = A_{1}$. Recall that $\mathcal{Z}(P)=A_d$, thus $d = 1 = d_{1}$.

Assume that $d_{1}\ge2$. First, for any $v\in\mathbb{Z}^n\setminus {d_{1}\mathbb{Z}^n}$, we observe that $E(v)=0$.

Indeed, 
\begin{align*}
E(v)\in P\cap L(\mathbb{Z}^n)'=P\cap L(\mathbb{Z}^n)=L(d_1\mathbb{Z}^n).
\end{align*}
Thus, from $\langle v-E(v), E(v)\rangle=0$, we deduce that $\langle E(v), E(v) \rangle=0$, i.e. $E(v)=0$.

Thus, for any $v\in \mathbb{Z}^n$, we have either $E(v)=0$ or $v\in d_1\mathbb{Z}^n$ and in this case $E(v)=v$ since $L(d_1\mathbb{Z}^n)=P\cap L(\mathbb{Z}^n)\subset P$.

We are left to consider that $E(vs)$ for all $v\in \mathbb{Z}^n$. Clearly, we have
\begin{equation*}
E(vs)=E(v)s=\begin{cases} 
vs, & v \in d_1\mathbb{Z}^n \\
0, & v \in \mathbb{Z}^n \setminus d_1\mathbb{Z}^n
\end{cases}
\end{equation*}
To summarize, we obtain $P=L(d_{1}\mathbb{Z}^n\rtimes\{\pm I_{n}
\})$, then $\mathcal{Z}(P)=A_{d_{1}}$. Recall that $\mathcal{Z}(P) = A_{d}$ in Case 3, thus $d=d_1$.

Moreover, take any $v\in\mathbb{Z}^n$, we have $vsv^{-1}\in P$, i.e. $(v\sigma_s(v^{-1}))s\in P$, so $v\sigma_s(v^{-1})\in P$. Notice that if we write $v=(x_{1},x_{2},\dots,x_{n} )^t$, then $v\sigma_s(v^{-1})=2(x_1,x_2,\dots,x_n)^t$. Hence $L(2\mathbb{Z}^n\rtimes \{\pm I_{n}\})\subseteq P$. Combining the above analysis, we deduce that $d_{1}$ can only take the value 1 or 2. Since $d_1\geq 2$, we get that $d_1=2$. 

In other words, in this Case 3-II, $P$ can only be $L(\mathbb{Z}^n \rtimes \{\pm I_{n}\})$ or $L(2\mathbb{Z}^n \rtimes \{\pm I_{n}\})$. It is easy to verify that both $\mathbb{Z}^n \rtimes \{\pm I_{n}\}$ and $2\mathbb{Z}^n \rtimes \{\pm I_{n}\}$ are normal subgroups of $G_{n}$.

\textbf{Case 3-II:} $P\cap L(\mathbb{Z}^n)=A_{d_{1}}$, for some $d_1\ge 1$.

Similarly, we distinguish between the situations $E(s)=0$ and $E(s)=s$.

\textbf{Subcase 1: $E(s) = 0$.}

By Remark \ref{remark: abelian assumption is used to show E(s)=0}, we may do a calculation similar to that Case 3 in Lemma \ref{3.5} to obtain $P = A_{d_{1}}$. Hence $P$ is abelian and $\mathcal{Z}(P) = P =A_{d_{1}}$. If $d=d_{1}$, $P$ is exactly $A_{d_{1}}$. Recall that $\mathcal{Z}(P) = A_{d}$, we also have $d=d_1$.

\textbf{Subcase 2: $E(s) = s$.}

In this subcase, we know that $s\in P$.

First, fix any non-zero vector $v\in d_1\mathbb{Z}^n$, we have $E(v)\in P\cap L(\mathbb{Z}^n)'=P\cap L(\mathbb{Z}^n)=A_{d_1}$. Hence, we can write $E(v)=\sum\limits_{\omega\in d_1\mathbb{Z}^n}\lambda_{\omega}\omega$ with symmetric Fourier coefficients. From $0=\langle v-E(v), P \rangle$, we deduce that $\langle v-\sum\limits_{\omega\in d_1\mathbb{Z}^n}\lambda_{\omega}\omega, \omega_0+\omega_{0}^{-1} \rangle=0$ for all $\omega_0\in d_1\mathbb{Z}^n$. Observe that by taking $\omega_0\not\in\{v, v^{-1}\}$, we can get that $0=\lambda_{\omega_0}+\lambda_{\omega_0^{-1}}=2\lambda_{\omega_0}.$ Similarly, by taking $\omega_0=v$, we deduce that $\lambda_{v}=\lambda_{v^{-1}}=\frac{1}{2}$. Hence $E(v)=\frac{v+v^{-1}}{2}$ for all $v\in d_1\mathbb{Z}^n$.

Second, we check that $E(v)=0$ for all $v\in\mathbb{Z}^n\setminus {d_1\mathbb{Z}^n}$.

Observe that we still have $E(v)\in P\cap L(\mathbb{Z}^n)'=P\cap L(\mathbb{Z}^n)=A_{d_{1}}$. Hence, from the fact that $0=\langle v-E(v), E(v) \rangle$, we deduce that $\langle E(v), E(v)\rangle=0$ for all $v\in \mathbb{Z}^n\setminus {d_1\mathbb{Z}^n}$ since $\langle v, E(v) \rangle=0$ for such a $v$, thus $E(v)=0$ is proved.

Let us compute $E(vs)$ for all $v\in \mathbb{Z}^n$. 
\begin{equation*}
E(vs)=E(v)s=\begin{cases} 
\dfrac{v+v^{-1}}{2}s\in A_{d_{1}} \rtimes \{\pm I_{n}\}, & v \in d_1\mathbb{Z}^n \\
0, & v \in \mathbb{Z}^n \setminus d_1\mathbb{Z}^n
\end{cases}
\end{equation*}
To summarize, we obtain that $P\subseteq A_{d_{1}}\rtimes\{\pm I_{n}
\}$. On the other hand, $P\cap L(\mathbb{Z}^n)=A_{d_{1}}$ and $E(s)=s$ yield $A_{d_{1}} \subseteq P$ and $s\in P$, which in turn gives $A_{d_{1}}\rtimes\{\pm I_{n}\}\subseteq P$, and hence  
$P = A_{d_{1}}\rtimes\{\pm I_{n}\}$.

Moreover, take any $v\in\mathbb{Z}^n$, we have $vsv^{-1}\in P$, i.e. $(v\sigma_s(v^{-1}))s\in P$, so $v\sigma_s(v^{-1})\in P$. Notice that if we write $v=(x_{1},x_{2},\dots,x_{n} )^t$, then $v\sigma_s(v^{-1})=2(x_1,x_2,\dots,x_n)^t$. Hence $L(2\mathbb{Z}^n\rtimes \{\pm I_{n}\})\subseteq P$. It is clear $P = A_{d_{1}}\rtimes\{\pm I_{n}\})$ contradicts $L(2\mathbb{Z}^n\rtimes \{\pm I_{n}\})\subseteq P$.
\end{proof}

Through the analysis of the  above three cases, we conclude that $P$ appears in the following list of subaglebras: 
\begin{itemize}
\item $\mathbb{C}, L(d\mathbb{Z}^n)(d\ge 1), L(2\mathbb{Z}^n \rtimes \{\pm I_{n}\})$, $L(\mathbb{Z}^n \rtimes \{\pm I_{n}\})$,
\item $L(H) ~\text{(for some normal subgroup $H\lhd G_{n}$)}$,
$A_{d} ~(d\ge1)$.
\end{itemize}

Except the case $P = A_{d}$, all others can be expressed in the form $P=L(H)$ for some normal subgroup $H\lhd G_{n}$. 
\end{proof}

Finally, here is the proof of Corollary \ref{cor}.

\begin{proof}[Proof of Corollary \ref{cor}]
    Let $P$ be a $G_n$-invariant von Neumann subalgebra in $L(G_n)$ with the Haagerup property. Then $P=A_n$ for some $n\geq 0$ or $L(H)$ for some normal subgroup $H\lhd G$ with the Haagerup property by Theorem \ref{thm}. By Lemma \ref{lem: Haagerup radical in G_n}, we know that $H\subseteq \mathbb{Z}^n\rtimes \{\pm I_n\}$, where $I_n$ denotes the identity matrix in $SL_n(\mathbb{Z})$. Therefore, $P\subseteq L(\mathbb{Z}^n\rtimes \{\pm I_n\})$ in both cases.
Notice that $L(\mathbb{Z}^n\rtimes \{\pm I_n\})$ is clearly $G_n$-invariant and has Haagerup property and hence it is the maximal one with these properties.
\end{proof}

\end{document}